\date{}
\author{Chlo\'e Perin and Rizos Sklinos}
\title{Forking and JSJ decompositions in the free group}
\begin{document}

\maketitle

\begin{abstract} 
We give a description of the model theoretic relation of forking independence in terms of the notion of JSJ decompositions 
in non abelian free groups.  
\end{abstract}

\section{Introduction}
In this paper we examine the model theoretic notion of forking independence in non abelian free groups. 

Forking independence was introduced by Shelah as part of the machinery needed in his classification program (see Section \ref{Stab}). 
It is an abstract independence relation between two tuples of a structure over a set of parameters. 

In the tame context 
of stable first-order structures, forking independence possesses certain nice properties (see Fact \ref{forkprop}). 
As a matter of fact the existence of an independence relation having these properties characterizes stable theories 
and the relation in this case must be exactly forking independence. So it is not surprising that   
forking independence has grown to have its own ontology and many useful notions 
have been introduced around it, first inside the context of stable theories and then adapted and developed more generally.
 
In some stable algebraic structures, such as 
a module or an algebraically closed field, forking independence admits an algebraic 
interpretation. In the latter case this is easily described: if $\bar{b},\bar{c}$ are finite tuples in an 
algebraically closed field $\mathcal{K}$ and $L$ is a subfield, then $\bar{b}$ is independent from $\bar{c}$ 
over $L$ if and only if the transcendence degree of $L(\bar{b}\bar{c})$ over $L(\bar{c})$ 
is the same as the transcendence degree of $L(\bar{b})$ over $L$. For a description in the case of modules we refer the reader to \cite{StevMod}\cite{PillMod}.

Philosophically speaking, in every ``natural'' stable structure one should be able 
to understand forking independence in terms of the underlying geometric or algebraic nature. 
Sela \cite{SelaStability} proved that (non-cyclic) torsion-free hyperbolic groups are stable, 
thus it is natural to ask whether the forking independence relation can be given an algebraic interpretation in these groups. 
This paper, following this line of thought, gives such an interpretation in free groups and in some torsion-free hyperbolic 
groups in terms of the Grushko and JSJ decompositions. 

The first main result of this paper is:

\begin{thmIntro} \label{FreeFactorsIntro}
Let $\bar{b},\bar{c}$ be tuples of elements in the free group $\F_n$ and let $A$ be a free factor of $\F_n$. Then 
$\bar{b}$ and $\bar{c}$ are independent over $A$ if and only if $\F_n$ admits a free decomposition $\F_n=\F*A*\F'$ with 
$\bar{b}\in \F*A$ and $\bar{c}\in A*\F'$.
\end{thmIntro}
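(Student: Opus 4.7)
My plan is to treat the two implications separately, relying on Sela's theorem that $\F_n$ is stable and on the key fact (from earlier work on homogeneity and types in the free group) that every non-abelian free factor of $\F_n$ is an elementary subgroup.

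For the direction $(\Leftarrow)$, I would start from the decomposition $\F_n=\F*A*\F'$ and observe that, modulo degenerate abelian or trivial cases treated by hand, $\F*A$ and $A*\F'$ are elementary subgroups of $\F_n$ intersecting exactly in $A$, and that $\F_n$ is their free amalgamation over $A$. This free amalgamation along a common elementary substructure is the geometric shape of non-forking in a stable theory: I would establish $(\F*A)\forkindep_A(A*\F')$, and hence $\bar{b}\forkindep_A\bar{c}$ by monotonicity, by exhibiting a rich group of automorphisms of $\F_n$ fixing $\F*A$ pointwise (parameterised by changes of basis of a complementary free factor), showing that $\tp(\bar{c}/\F*A)$ is $A$-invariant, and concluding via stability that this type is finitely satisfiable in $A$.

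For the direction $(\Rightarrow)$, which carries the real content, assume $\bar{b}\forkindep_A\bar{c}$. Since $A$ is a free factor, write $\F_n=A*B$; the task is to split $B=\F*\F'$ so that the $B$-letters in the normal form of $\bar{b}$ lie in $\F$ and those of $\bar{c}$ lie in $\F'$. My strategy is to Grushko-decompose $\F_n$ relative to $A\cup\bar{c}$ as $\F_n=(A*\F_1)*\F_2$ with $\bar{c}\in A*\F_1$, then produce a realization $\bar{b}'$ of $\tp(\bar{b}/A)$ lying inside the complementary region $A*\F_2$. Once such a $\bar{b}'$ is in hand, homogeneity of $\F_n$ together with the uniqueness of the non-forking extension of $\tp(\bar{b}/A)$ to $A\bar{c}$ (using that $A$, being a non-abelian free factor, is algebraically closed and an elementary subgroup) produces an $A\bar{c}$-automorphism of $\F_n$ sending $\bar{b}'$ to $\bar{b}$; pushing the decomposition $\F_2*A*\F_1$ forward through this automorphism delivers the sought-for $\F*A*\F'$.

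The main obstacle will be producing the witness realization $\bar{b}'$ inside the controlled free factor $A*\F_2$. This is where the forking hypothesis must be used essentially, ruling out any entanglement between $\bar{b}$ and $\bar{c}$ that would prevent such a generic placement. I expect this step to require a careful combination of Sela's analysis of types in the free group with an explicit JSJ or Grushko argument relative to $A$; translating finite satisfiability (the model-theoretic content of non-forking over the elementary substructure $A$) into a genuine group-theoretic placement of $\bar{b}'$ is the heart of the matter.
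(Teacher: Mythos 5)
The main issue is that you have correctly identified the heart of the matter --- producing a realization $\bar{b}'$ of $\tp(\bar{b}/A)$ inside the complementary factor $A*\F_2$ --- and then acknowledge that you do not know how to do it. This is a genuine gap, not a routine detail: staying inside $\F_n$, there is no reason $\tp(\bar{b}/A)$ should be realizable in $A*\F_2$, and any argument that it is (via a rank count and an elementary embedding of a free factor containing $\bar{b}$ into $A*\F_2$) already presupposes the decomposition $\F_n=\F*A*\F'$ you are trying to build. The paper's device to break this circularity is to leave $\F_n$: embed $\F_n$ as a free factor of the larger free group $\F_{2n-m}$ (so $\F_n\prec\F_{2n-m}$ by Kharlampovich--Myasnikov/Sela), write $\bar{b}$ as a word $\bar{b}(e_1,\ldots,e_n)$, and define $\bar{b}':=\bar{b}(e_1,\ldots,e_m,e_{n+1},\ldots,e_{2n-m})$ using the fresh basis letters. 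There $\bar{b}'$ is manifestly independent from $A\bar{c}$ over $A$ and has the same type over $A$ as $\bar{b}$. Stationarity of $\tp(\bar{b}/A)$ then gives $\tp(\bar{b}/A\bar{c})=\tp(\bar{b}'/A\bar{c})$, homogeneity of $\F_{2n-m}$ gives an automorphism over $A\bar{c}$ taking $\bar{b}'$ to $\bar{b}$, and --- this is the second missing ingredient --- one descends from the resulting free splitting of $\F_{2n-m}$ to one of $\F_n$ via the Kurosh subgroup theorem. Without that descent step your pushed-forward decomposition would only live in the big group.

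Two further points. First, the stationarity you invoke is not free for small $A$: you write that it comes from $A$ being a non-abelian elementary subgroup, but $A$ is allowed to be trivial or $\mathbb{Z}$, and these are precisely the cases where stationarity is a substantive result (Theorem \ref{stat} and its Corollaries \ref{emptyset} and \ref{Z}), not a ``degenerate case treated by hand.'' Second, in the easy direction you appeal to ``finite satisfiability in $A$'' as the content of non-forking over $A$; this identification is valid over a model, but again $A$ need not be one, so that route is off. The paper's $(\Leftarrow)$ is a single appeal to Theorem \ref{GenericInFree}: a basis of $\F_n$ is an $\emptyset$-independent set, and monotonicity/transitivity then give $\bar{b}\forkindep_A\bar{c}$ directly from $\bar{b}\in\F*A$, $\bar{c}\in A*\F'$.
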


Thus two finite tuples are independent over $A$ if and only if they live in "essentially disjoint" parts of the Grushko decomposition 
of $\F_n$ relative to $A$ (i.e. the maximal decomposition of $\F_n$ as a free product in which $A$ is contained in one of the factors). 
The essential ingredients of the proof of our first result is the homogeneity of non abelian free groups and a result of independent 
interest concerning the stationarity of types in the theory of non abelian free groups (see Theorem \ref{stat}).

The relative Grushko decomposition of a group with respect to a set of parameters is a way to see all the splittings of the group as a 
free product in which the set of parameters is contained in one of the factors. The relative cyclic JSJ decomposition is a 
generalization of this: it is a graph of groups decomposition which encodes all the splittings of the group as an amalgamated 
product or an HNN extension over a cyclic group, for which the parameter set is contained in one of the factors 
(see Section \ref{JSJSec}).

The second result deals with the case where the parameter set is not contained in any proper free factor, 
so that the relative Grushko decomposition is trivial, and tells us that two tuples are then independent over $A$ if and only if 
they live in "essentially disjoint" parts of the cyclic JSJ decomposition of $\F_n$ relative to $A$. 

\begin{thmIntro} \label{FreelyIndecIntro} 
 Let $\F_n$ be freely indecomposable with respect to $A$.

Let $(\Lambda, v_A)$ be the pointed cyclic JSJ decomposition of $\F_n$ with respect to $A$. Let $\bar{b}$ and $\bar{c}$ be tuples in $\F_n$, 
and denote by $\Lambda_{A\bar{b}}$ (respectively $\Lambda_{A\bar{c}}$) the minimal subgraphs of groups of 
$\Lambda$ whose fundamental group contains the subgroups $\langle A, \bar{b}\rangle$ (respectively $\langle A, \bar{c} \rangle$) of $\F_n$.

Then $\bar{b}$ and $\bar{c}$ are independent over $A$ if and only if each connected component of 
$\Lambda_{A\bar{b}} \cap \Lambda_{A\bar{c}}$ contains at most one non Z-type vertex, 
and such a vertex is of non surface type.
\end{thmIntro}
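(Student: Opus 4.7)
The plan is to follow the strategy of Theorem \ref{FreeFactorsIntro}, replacing the Grushko decomposition by the cyclic JSJ. The two pillars will be homogeneity of $\F_n$ (which identifies types over $A$ with $\Aut(\F_n/A)$-orbits) and a stationarity statement over JSJ-type parameter sets, generalising Theorem \ref{stat}. For both directions I will work with the modular group $\Mod(\F_n, A)$ and its interplay with $\Mod(\F_n, A\bar{c})$, which is essentially the modular group of the JSJ of $\F_n$ relative to $A\bar{c}$; the latter is obtained from $\Lambda$ by refining along $\Lambda_{A\bar{c}}$ while leaving the complementary part intact.

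For the ``if'' direction I would first analyse how the JSJs of $\F_n$ relative to $A$ and to $A\bar{c}$ compare. Under the combinatorial hypothesis, modular automorphisms of $\F_n$ supported on the portion of $\Lambda$ outside $\Lambda_{A\bar{c}}$ --- namely Dehn twists along the cyclic edges separating $\Lambda_{A\bar{b}} \setminus \Lambda_{A\bar{c}}$ from $\Lambda_{A\bar{c}}$, together with surface-type automorphisms at surface vertices of $\Lambda_{A\bar{b}}$ not shared with $\Lambda_{A\bar{c}}$ --- lie in $\Mod(\F_n, A\bar{c})$. Iterating these I would produce a sequence of $A\bar{c}$-conjugates $(\bar{b}_i)_{i<\omega}$ of $\bar{b}$, each realizing $\tp(\bar{b}/A)$ and pushed progressively away from $\bar{c}$. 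Combined with the stationarity statement, this sequence witnesses $\bar{b} \forkindep_A \bar{c}$.

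For the ``only if'' direction I would argue contrapositively. If a connected component of $\Lambda_{A\bar{b}} \cap \Lambda_{A\bar{c}}$ contains two non-Z-type vertices, they are joined by a path of $Z$-vertices along cyclic edges, producing a subgroup $H \leq \F_n$ in which both $\langle A, \bar{b} \rangle$ and $\langle A, \bar{c}\rangle$ embed non-trivially. The rigidity of $H$ modulo its edge stabilisers, combined with the fact that elements of rigid vertex groups are algebraic over the incident edge groups in the sense of $\acl$, should yield a definable relation between $\bar{b}$ and $\bar{c}$ with finitely many solutions, forcing forking. The surface-vertex case is treated through the filling-curve phenomenon in surface groups, which again forces $\bar{b}$ and $\bar{c}$ to be linked by a finite-fibre correspondence.

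The main obstacle I anticipate is the ``one non-Z-type vertex per component'' threshold: showing that a single shared rigid vertex genuinely allows non-forking, while two immediately destroy it. This hinges on a delicate comparison between the modular groups relative to $A$ and to $A\bar{c}$, and on transferring the stationarity statement of Theorem \ref{stat} from the free-factor setting to the setting where the base is not contained in any proper free factor. Establishing this refined stationarity --- presumably via a Sela-style shortening argument adapted to the cyclic JSJ --- will be the technical heart of the proof.
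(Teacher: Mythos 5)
Your plan diverges from the paper's proof in a way that introduces a genuine gap, most seriously in the ``if'' direction.

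For the ``if'' direction you propose to imitate Theorem \ref{FreeFactors}: use homogeneity plus a stationarity statement generalising Theorem \ref{stat}, and to ``produce a sequence of $A\bar{c}$-conjugates of $\bar{b}$... pushed progressively away from $\bar{c}$'' to witness independence. This cannot work as stated. Producing an infinite sequence of conjugates with a $k$-inconsistency property is how one witnesses \emph{dividing}, i.e.\ forking, not non-forking; to show non-forking you must argue that no formula in $\tp(\bar{b}/A\bar{c})$ admits such a sequence. Moreover, the stationarity result of Theorem \ref{stat} is used in the paper only in the free-factor case, where the base set $A$ is (algebraically closed inside) a free factor, hence a small model or close to it. When $\F_n$ is freely indecomposable relative to $A$, there is no reason for types over $A$ to be stationary, and the paper makes no such claim. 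What the paper actually uses in the ``if'' direction is quite different: atomicity of $G$ over $A$ (Theorem \ref{TypesIsolated}), Lemma \ref{AtoFork} (a formula whose solution set in the atomic model contains a nonempty almost $A$-invariant subset does not fork over $A$), and Lemma \ref{WorkInStandardModel}. The crux is to show that the orbit $\Mod_{A\bar{c}}(G)\cdot\bar{b}$ is preserved by $\Mod_A(G)$, via the normal form Lemma \ref{NormalFormMod} for modular automorphisms and the combinatorial hypothesis on $\Lambda_{A\bar{b}}\cap\Lambda_{A\bar{c}}$; this orbit is then almost $A$-invariant because $\Mod_A(G)$ has finite index in $\Aut_A(G)$.

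For the ``only if'' direction you are closer in spirit (algebraic closure of rigid vertex groups does play a role, via Propositions \ref{MinSubtreeAndAlgClos} and \ref{RigidTranslates}, and the $\acl$ argument you sketch is essentially Lemma \ref{IntersectInBasepointEdgeFork}), but the general mechanism in the paper is not a ``definable finite-fibre correspondence.'' Instead, in the two non-trivial cases the paper exhibits an \emph{explicit dividing sequence}: a sequence of automorphisms in $\Aut_A(G)$ (powers of a Dehn twist $\tau^{rn}$ in Lemma \ref{IntersectInEdgeFork}; vertex automorphisms $\rho_n$ built from mapping classes in the surface case) whose translates of a definable set over $A\bar{c}$ containing the relevant tuple are pairwise disjoint. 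The surface case does not rest on a ``filling-curve phenomenon'' or a shortening argument: it rests on the infinite diameter of the curve complex (Masur--Minsky) via Lemma \ref{DisjointBallsInCC} and Lemma \ref{sccfork}, which provide mapping classes that translate balls in ${\cal C}(\Sigma)$ off themselves. So the technical heart you anticipate (Sela-style shortening to prove stationarity) is not present in the paper and, in the ``if'' direction, would not give you what you need.
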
 

This is a special case of Theorem \ref{FreelyIndec}, where we prove it for torsion-free 
hyperbolic groups which are concrete over a set of parameters $A$. A group is said to be concrete with respect 
to the set of parameters $A$ if it is freely indecomposable with respect to $A$, and does not 
admit an extended hyperbolic floor structure over $A$ - that is, $A$ is not contained in a proper 
retract of $G$ which satisfies certain properties, see \cite{LouderPerinSklinosTowers}. 
  
In this second result, the middle step between the purely model theoretic notion of forking independence and the 
purely geometric one of JSJ decomposition is that of understanding the automorphism group of $\F_n$ relative to $A$. 
Indeed, the JSJ decomposition enables us to give in this setting a very good description (up to a finite index) 
of the group of automorphisms which fix $A$ pointwise. On the other 
hand, in many cases the model theoretic definitions bear a 
strong relation to properties of invariance under automorphisms (as can be seen in Section \ref{Stab}).  

It is remarkable that once more the tools of geometric group theory prove so adequate to understand the 
first-order theory of torsion-free hyperbolic groups. The proof of Theorem \ref{FreelyIndecIntro} makes use of 
recent deep results of this field, such as Masur and Minsky's results on the curve complex of a surface. 
 
The paper is organised as follows. Section \ref{Stab} gives a thorough exposition of the model theoretic notions needed, 
and the necessary background on forking independence. Section \ref{FreeFactorsSec} is devoted to the proof of Theorem \ref{FreeFactorsIntro}. 
Section \ref{JSJSec} gives relations between the automorphisms and JSJ decomposition. In Section \ref{IsolatingSec} 
we prove that torsion-free hyperbolic groups are atomic over sets of parameters with respect to which they are concrete. 
Section \ref{AlgClosureSec} recalls some properties of algebraic closures, 
and a description of algebraic closures in torsion-free hyperbolic groups. Section \ref{CCSec} gives a brief 
introduction to the curve complex and states the results needed in Section \ref{FreelyIndecSec}, which is 
devoted to the proof of Theorem \ref{FreelyIndecIntro}. In the final section we give some examples and 
make some further remarks on our results.

\paragraph{Acknowledgements.}
We wish to thank Anand Pillay for suggesting the problem.  
Part of this work was conducted while the second named author was a PhD student under the 
supervision of A. Pillay and he would like to thank him for teaching him how to fork. 

We are grateful to Zlil Sela for a number of helpful suggestions over the course of this work. 

We would also like to thank Udi Hrushovski for the idea of Theorem \ref{stat}.

Finally, this work would have not been made possible without the support of 
A.Pillay and Z.Sela that allowed the authors to visit each other and collaborate.

\section{An introduction to forking independence} \label{Stab}
In this section we give an almost complete account of the model theoretic background needed for the rest of the paper. 

\subsection{Basic Notions}

We first recall and fix notations for the basic notions of model theory. Let $\mathcal{M}$ be a structure, and  
$\mathcal{T}h(\mathcal{M})$ its complete first-order theory (namely, the set of all first-order sentences that hold in $\mathcal{M}$). 

An {\em $n$-type} $p(\bar{x})$ of $\mathcal{T}h(\mathcal{M})$ is a set of formulas without parameters in $n$ variables which is 
consistent with $\mathcal{T}h(\mathcal{M})$. 
A type $p(\bar{x})$ is called complete if for every $\phi(\bar{x})$ either $\phi$ or $\lnot\phi$ is in $p(\bar{x})$. 
For example if $\bar{a}$ is a tuple in $\mathcal{M}$, the set $\tp^{\mathcal M}(\bar{a})$ of all formulas satisfied by $\bar{a}$ is a complete type.

If $A\subset\mathcal{M}$ is a set of parameters, we denote by $S_n^{\mathcal{M}}(A)$ the set of all 
complete $n$-types of $\mathcal{T}h((\mathcal{M},\{a\}_{a\in A}))$ 
(where $(\mathcal{M},\{a\}_{a\in A})$ is the structure obtained from $\mathcal{M}$ by adding the elements of $A$ as constant symbols). 
We also note that the set of $n$-types over the empty set of a first-order theory $T$ is usually denoted by $S_n(T)$.
 
It is easy to see that $S_n^{\mathcal{M}}(A)$ is a Stone space when equipped with the topology defined 
by the following basis of open sets $[\phi(\bar{x})]=\{p\in S_n^{\mathcal{M}}(A) : \phi\in p\}$, where $\phi(\bar{x})$ is a formula 
with parameters in $A$. 
A type $p\in S_n^{\mathcal{M}}(A)$ is {\em isolated} if there is a formula, $\phi\in p$,  
such that $[\phi]=\{p\}$. 

We continue with some basic definitions.


\begin{defi}
Let $A$ be a subset of a structure $\mathcal{M}$. Then $\mathcal{M}$ is called atomic over $A$ 
if every type in $S_n^{\mathcal{M}}(A)$ which is realized in $\mathcal{M}$ is isolated. 
\end{defi}

Equivalently, if $\mathcal{M}$ is a countable structure, then $\mathcal{M}$ is atomic if the orbit $Aut(\mathcal{M}).\bar{b}$ 
of every finite tuple $\bar{b}\in \mathcal{M}$ is $\emptyset$-definable. If the orbits are merely $\emptyset$-type-definable (i.e.
the solution set of a type in $\mathcal{M}$), then we obtain the notion of homogeneity.



\begin{defi}
A countable structure $\mathcal{M}$ is said to be homogeneous, if for every two finite tuples $\bar{b},\bar{c}\in \mathcal{M}$ 
such that $tp^{\mathcal{M}}(\bar{b})=tp^{\mathcal{M}}(\bar{c})$ there is an automorphism of 
$\mathcal{M}$ sending $\bar{b}$ to $\bar{c}$.
\end{defi}





Let $\bar{a},A\subset \mathcal{M}$. We say $\bar{a}$  
is {\em algebraic} (respectively {\em definable}) over $A$, if there is a formula $\phi(\bar{x})$ with parameters in $A$ 
such that $\bar{a}\in \phi(\mathcal{M})$ and $\phi(\mathcal{M})$ is finite (respectively has cardinality one). We denote the set of 
algebraic (respectively definable) tuples over $A$ by $acl_{\mathcal{M}}(A)$ (respectively $dcl_{\mathcal{M}}(A)$). The following lemma is immediate.

\begin{lemma}\label{AlgClos}
Let $\mathcal{M}$ be countable and atomic over $A$. Then for any $\bar{a}\in \mathcal{M}$, 
$\bar{a}$ is algebraic (respectively definable) over $A$ if and only if 
$\abs{\{f(\bar{a}) : f\in Aut(\mathcal{M}/A)\}}$ is finite (respectively has cardinality one).  
\end{lemma}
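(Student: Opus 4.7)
The plan is to reduce to the parameter-free version of the orbit-definability characterization that the excerpt already invokes just after its definition of atomicity. Given $\mathcal{M}$ atomic over $A$, consider the expansion $\mathcal{M}_A := (\mathcal{M}, \{a\}_{a \in A})$: by definition, $\mathcal{M}_A$ is atomic (in the ordinary parameter-free sense) and its automorphism group is exactly $\text{Aut}(\mathcal{M}/A)$. The quoted equivalence, applied in $\mathcal{M}_A$, gives that every orbit of $\text{Aut}(\mathcal{M}/A)$ on finite tuples of $\mathcal{M}$ is $\emptyset$-definable in $\mathcal{M}_A$, which is the same as being definable over $A$ in $\mathcal{M}$.

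With this in hand, both implications are immediate. For the easy direction, if $\bar{a}$ is algebraic (resp.\ definable) over $A$, witnessed by a formula $\psi(\bar{x})$ with parameters in $A$ whose realization set $\psi(\mathcal{M})$ is finite (resp.\ a singleton) and contains $\bar{a}$, then every $f \in \text{Aut}(\mathcal{M}/A)$ preserves $\psi(\mathcal{M})$, so the orbit of $\bar{a}$ lies inside $\psi(\mathcal{M})$ and is finite (resp.\ a singleton). For the converse, let $O$ denote the orbit of $\bar{a}$ under $\text{Aut}(\mathcal{M}/A)$; by the reduction above, $O$ is defined by some formula $\phi(\bar{x})$ over $A$, so $\phi(\mathcal{M}) = O$. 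If $O$ is finite, then $\phi$ witnesses algebraicity of $\bar{a}$ over $A$; if $|O|=1$, then $\phi$ witnesses definability.

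There is no real obstacle: once one accepts the parameter-free characterization of atomicity via orbits, the lemma is formal, which is presumably why the authors call it immediate. The only thing that requires checking if one wants to be fully self-contained is the parameter-free statement itself, namely that in a countable atomic structure each automorphism orbit equals the realization set of an isolating formula. That proof is a standard back-and-forth on a fixed enumeration of $\mathcal{M}$, using at each step the observation that atomicity over $A$ transfers to atomicity over $A$ enlarged by a finite tuple $b_1, \ldots, b_k$ from $\mathcal{M}$: if $\tp^{\mathcal{M}}(\bar{x}/A b_1 \cdots b_k)$ is realized in $\mathcal{M}$, then so is $\tp^{\mathcal{M}}(\bar{x} b_1 \cdots b_k / A)$, and an isolating formula for the latter, with its last coordinates set to $b_1,\ldots,b_k$, isolates the former. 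This guarantees that partial finite elementary maps fixing $A$ can always be extended, producing the required automorphism.
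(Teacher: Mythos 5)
Your argument is correct and uses the same key observation as the paper, namely that in a countable structure atomic over $A$, every $\Aut(\mathcal{M}/A)$-orbit is the realization set of a formula over $A$; passing to $\mathcal{M}_A$ and invoking the parameter-free orbit characterization merely unpacks what the paper states immediately after its definition of atomicity. The extra back-and-forth sketch for the countable atomic case is a fine supplement for self-containment, but substantively this is the paper's one-line proof written out.
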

\begin{proof}
Just note that the orbit of any tuple under $\Aut(\mathcal{M}/A)$ is a definable set over $A$.
\end{proof}
 


      

\subsection{Stability Theory}
Stability theory is an important part of modern model theory. The rudiments 
of stability can be found in the seminal work of Morley proving \L{}os conjecture, namely that a countable theory 
is categorical in an uncountable cardinal if and only if it is categorical in all uncountable cardinals. 
A significant aspect of Morley's work is that he assigned an invariant 
(a dimension for some independence relation) to a model 
that determined the model up to isomorphism. In full 
generality most of the results concerning stability are attributed to Shelah \cite{Shelah}. Shelah 
has established several dividing lines separating ``well behaved'' theories from 
theories which do not have a structure theorem classifying their models. One such dividing line 
is stable versus unstable, where if a first-order theory $T$ is unstable then it has 
the maximum number of models, $2^{\kappa}$, for each cardinal $\kappa\geq 2^{\abs{T}}$.

A first-order theory $T$ is {\em stable} if it prevents the definability of an infinite linear order. 
More formally we have: a first-order formula $\phi(\bar{x},\bar{y})$ in a structure $\mathcal{M}$ 
has the {\em order property} if there are sequences $(\bar{a}_n)_{n<\omega},(\bar{b}_n)_{n<\omega}$ 
such that $\mathcal{M}\models\phi(\bar{a}_n,\bar{b}_m)$ if and only if $m<n$. 

\begin{defi}
A first-order theory $T$ is stable if no formula has the order property in a model of $T$. 
\end{defi}
    
By the discussion above it is apparent that the development of an abstract independence relation 
enabling us to assign a dimension to several sets will be useful. This is what brought Shelah to define forking independence. 

The rest of the subsection is devoted to a thorough description of forking independence in stable theories. Unless otherwise stated, 
all the results in this subsection which are stated without a proof can be found in \cite[Chapter 1, Sections 1-2]{PillayStability}.
 
We fix a stable first-order theory $T$ and we work in a ``big'' saturated model $\mathbb{M}$ of $T$, which  
is usually called the {\em monster model} (see \cite[p.218]{MarkerModelTheory}).

We write $tp(\bar{a}/A)$ for $tp^{\mathbb{M}}(\bar{a}/A)$ and 
$S_n(A)$ for $S_n^{\mathbb{M}}(A)$. 

\begin{defi} A formula {\em $\phi(\bar{x},\bar{b})$ forks over $A$} if there are $n<\omega$ and 
an infinite sequence $(\bar{b}_i)_{i<\omega}$ such that $tp(\bar{b}/A)=tp(\bar{b_i}/A)$ for $i<\omega$, and 
the set $\{\phi(\bar{x},\bar{b}_i) : i<\omega\}$ is $n$-inconsistent.

A tuple $\bar{a}$ is {\em independent} from $B$ over $A$ (denoted $\bar{a} \underset{A}{\forkindep} B$)
if there is no formula in $tp(\bar{a}/B)$ which forks over $A$.
\end{defi}

In Section 2 of \cite{LouderPerinSklinosTowers} we give 
an intuitive account of the above definition. 



The following observation is immediate.

\begin{rmk}\label{ForkImpl}
Let $\mathbb{M}\models \phi(\bar{x})\rightarrow\psi(\bar{x})$ and $\psi(\bar{x})$ forks over $A$. Then $\phi(\bar{x})$ forks over $A$.
\end{rmk}

\begin{defi} If $p\in S_n(A)$ and $A\subseteq B$, 
then $q:=tp(\bar{a}/B)$ is called a {\em non forking extension of $p$}, if $p\subseteq q$ 
and moreover $\bar{a} \underset{A}{\forkindep} B$.
\end{defi}

\begin{defi} A type $p\in S_n(A)$ is called {\em stationary} 
if for any $B\supseteq A$, $p$ has a unique non-forking extension over $B$.  
\end{defi}

\begin{defi} Let $C=\{\bar{c}_i : i\in I\}$ be a set of tuples, we say that {\em $C$ is an independent set over $A$} if for 
every $i\in I$, $\bar{c}_i \underset{A}{\forkindep} \bigcup C\setminus\{\bar{c}_i\}$.
\end{defi}

If $p$ is a type over $A$ which is stationary and  $(a_i)_{i<\kappa}, (b_i)_{i<\kappa}$ are both independent sets over $A$ of 
realizations of $p$, 
then $tp((a_i)_{i<\kappa}/A)=tp((b_i)_{i<\kappa}/A)$. 
This allows us to denote by $p^{(\kappa)}$ the type of $\kappa$-independent realizations of $p$. Is not 
hard to see that if $p$ is stationary then so is $p^{(\kappa)}$.

We observe the following behavior of forking independence inside a countable atomic model of $T$. 

\begin{lemma}\label{AtoFork}
Let $\mathcal{M}\models T$. Let $\bar{b},A\subset \mathcal{M}$, 
and $\mathcal{M}$ be countable and atomic over $A$. Suppose $X:=\phi(\mathcal{M},\bar{b})$ contains 
a non empty almost $A$-invariant subset (i.e. a subset that has finitely many images under $Aut(\mathcal{M}/A)$). 
Then $\phi(\bar{x},\bar{b})$ does not fork over $A$.   
\end{lemma}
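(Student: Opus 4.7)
The plan is to argue by contradiction using a pigeonhole on the finite orbit of $Y$. Suppose $\phi(\bar{x},\bar{b})$ forks over $A$; then by definition there are $n<\omega$ and an infinite sequence $(\bar{b}_i)_{i<\omega}$ with $\tp(\bar{b}_i/A)=\tp(\bar{b}/A)$ such that $\{\phi(\bar{x},\bar{b}_i):i<\omega\}$ is $n$-inconsistent. The goal is to exhibit a common realisation of $n$ of these formulas, contradicting $n$-inconsistency.

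Since $\mathcal{M}$ is countable and atomic over $A$, it is homogeneous over $A$, so any two tuples of $\mathcal{M}$ with the same type over $A$ are conjugate under $\Aut(\mathcal{M}/A)$. The core of the argument runs as follows. Assuming I can reproduce the forking witness inside $\mathcal{M}$, i.e.\ find a sequence $(\bar{b}^*_i)_{i<\omega}$ in $\mathcal{M}$ of realisations of $\tp(\bar{b}/A)$ for which $\{\phi(\bar{x},\bar{b}^*_i)\}$ is still $n$-inconsistent, homogeneity provides $\sigma_i\in\Aut(\mathcal{M}/A)$ with $\sigma_i(\bar{b})=\bar{b}^*_i$, and hence $\sigma_i(Y)\subseteq\sigma_i(\phi(\mathcal{M},\bar{b}))=\phi(\mathcal{M},\bar{b}^*_i)$. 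The almost $A$-invariance of $Y$ forces the family $\{\sigma_i(Y):i<\omega\}$ to take only finitely many values $Y_1,\ldots,Y_k$, so pigeonhole on the infinite index set yields indices $i_1<\ldots<i_n$ and some $j$ with $\sigma_{i_l}(Y)=Y_j$ for every $l$. Any element of the non-empty set $Y_j$ then satisfies each $\phi(\bar{x},\bar{b}^*_{i_l})$, contradicting $n$-inconsistency.

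The main obstacle is producing the witness sequence inside $\mathcal{M}$: the joint type of the external witness over $A$ need not be isolated, so atomicity does not directly give a copy inside $\mathcal{M}$. The case $\bar{b}\in\acl(A)$ is immediate, since then $\phi(\bar{x},\bar{b})$ contains an algebraic tuple over $A$ and hence does not fork over $A$. When $\tp(\bar{b}/A)$ is non-algebraic it has infinitely many realisations in $\mathcal{M}$, and one can either construct the witness step by step inside $\mathcal{M}$ by combining homogeneity with the density of isolated types in a stable theory, or perform the pigeonhole directly in the monster $\mathbb{M}$ by lifting each required $\sigma\in\Aut(\mathcal{M}/A)$ to an automorphism of $\mathbb{M}$ fixing $A$ via saturation and mapping $\bar{b}$ onto $\bar{b}_i$ there. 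Either route reduces the problem to the same pigeonhole argument, so the heart of the proof lies in squaring the finite-orbit hypothesis with the unbounded number of conjugates of $\bar{b}$ demanded by forking.
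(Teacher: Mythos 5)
Your pigeonhole endgame is exactly the right one, and it is what closes the paper's argument: use homogeneity of the countable atomic model to realize the in-model witnesses as $\Aut(\mathcal{M}/A)$-conjugates of $\bar b$, observe that the corresponding images of $Y$ can take only finitely many values, and extract $n$ indices whose formulas share a non-empty subset of $\mathcal{M}$. But the step you flag as "the main obstacle" is a genuine gap in your write-up, and neither of the two routes you sketch resolves it. Lifting to the monster cannot work: the finite-orbit hypothesis on $Y$ is a hypothesis about $\Aut(\mathcal{M}/A)$, and $Y$ is an arbitrary (possibly undefinable) subset of $\mathcal{M}^n$, so there is no reason its orbit under the far larger group $\Aut(\mathbb{M}/A)$ remains finite -- the pigeonhole simply collapses upstairs. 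The "density of isolated types" route is also not the right tool and is left entirely unspecified.

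The resolution is simpler than what you are reaching for, and it is where the atomicity hypothesis is actually spent. You do not need to reproduce the \emph{joint} type of the external witness sequence; you only need each witness to realize $\tp(\bar b/A)$ and the family $\{\phi(\bar x,\bar b_i)\}$ to be $k$-inconsistent. Since $\mathcal{M}$ is atomic over $A$, $\tp(\bar b/A)$ is isolated by some $\psi(\bar y)$ over $A$. Then, for any fixed $\lambda$, the statement
\[
\exists \bar y_1,\ldots,\bar y_\lambda\;\Bigl[\textstyle\bigwedge_i \psi(\bar y_i)\;\wedge\;\text{``every $k$-subset of }\{\phi(\bar x,\bar y_1),\ldots,\phi(\bar x,\bar y_\lambda)\}\text{ is inconsistent''}\Bigr]
\]
is a single $L_A$-sentence, true in $\mathbb{M}$ by taking $\bar b_1,\ldots,\bar b_\lambda$ from the forking witness, hence true in $\mathcal{M}$ by elementarity. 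Choosing $\lambda$ larger than $(k-1)\cdot\lvert\{\sigma(Y):\sigma\in\Aut(\mathcal{M}/A)\}\rvert$, your pigeonhole then produces $k$ of these tuples whose $\phi$-sets all contain the same non-empty translate of $Y$, contradicting $k$-inconsistency. In short: the pigeonhole half of your argument matches the paper; the transfer half needs to be replaced by this first-order reflection, which is the actual content of the lemma.
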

\begin{proof}
Suppose it does, then there is an infinite sequence, $(\bar{b}_i)_{i<\omega}$ in $\mathbb{M}$ such that 
$tp(\bar{b}_i/A)=tp(\bar{b}/A)$ and $\{\phi(\bar{x},\bar{b}_i): i<\omega\}$ is $k$-inconsistent for some $k<\omega$. 
Since $\mathcal{M}$ is atomic over $A$, we have that $tp(\bar{b}/A)$ is isolated, say by $\psi(\bar{y})$. Thus, 
for arbitrarily large $\lambda$ the following sentence (over $A$) is true:
$$\mathbb{M}\models \exists \bar{y}_1,\ldots,\bar{y}_{\lambda}[(\psi(\bar{y}_1)\land\ldots
\land\psi(\bar{y}_{\lambda}))\bigwedge \textrm{``any $k$-subset of 
$\{\phi(\bar{x},\bar{y}_1),\ldots,\phi(\bar{x},\bar{y}_{\lambda})\}$ is inconsistent''}]$$

But the above sentence is true in $\mathcal{M}$, and this contradicts the hypothesis that 
$\phi(\bar{x},\bar{b})$ contains a non empty almost $A$-invariant set.

\end{proof}

We list some properties of forking independence (recall that we assumed the theory $T$ to be stable).

\begin{fact}\label{forkprop}
 \begin{itemize}
  \item[(i)](existence of non-forking extensions) Let $p\in S_n(A)$ and $A\subseteq B$. 
  Then there is a non forking extension of $p$ over $B$;
 \item[(ii)](symmetry) $\bar{a} \underset{A}{\forkindep} \bar{b}$ if and only if 
 $\bar{b} \underset{A}{\forkindep} \bar{a}$;
 \item[(iii)](local character) For any $\bar{a},A$, there is $A'\subseteq A$ with $\abs{A'}\leq \abs{T}$, such that 
 $\bar{a} \underset{A'}{\forkindep} A$;
 \item[(iv)](transitivity) Let $A\subseteq B\subseteq C$. Then $\bar{a} \underset{A}{\forkindep} C$    
if and only if $\bar{a} \underset{A}{\forkindep} B$ and $\bar{a} \underset{B}{\forkindep} C$;
 \item[(v)](boundedness) Every type over a model is stationary.
 \end{itemize}
\end{fact}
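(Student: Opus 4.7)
The plan is to route all five properties through the fundamental theorem that \emph{types are definable in stable theories}: for any complete type $p\in S_n(B)$ and any formula $\phi(\bar x,\bar y)$, there is a formula $d_p\phi(\bar y)$ with parameters in $B$ such that $\phi(\bar x,\bar b)\in p$ if and only if $\models d_p\phi(\bar b)$. One establishes this by a counting/back-and-forth argument: if $p$ failed to be $\phi$-definable, a compactness construction would produce, inside a model of $T$, parameters arranging $\phi$ into an infinite half-graph, contradicting the order-property characterization of instability.

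Once definability of types is in hand, I would reformulate non-forking in terms of defining schemes. For $p\in S_n(A)$ and $A\subseteq B$, an extension $q\supseteq p$ in $S_n(B)$ is non-forking precisely when its defining scheme is conjugate over $A$ to that of $p$; equivalently, $q$ is invariant over $\acl^{eq}(A)$. From this:
\begin{itemize}
\item[(i)] Existence follows by extending the scheme of $p$: use the formulas $d_p\phi$ as a template to define a global type extending $p$, then restrict to $B$.
\item[(v)] When $A=M$ is a model, every Galois conjugate of the scheme of $p$ over $M$ is the scheme itself, so the non-forking extension is unique — this is stationarity.
\item[(iii)] The definition scheme of $\tp(\bar a/A)$ involves only countably many (in general $\leq|T|$) formulas, hence uses only a subset $A'\subseteq A$ of size $\leq|T|$; by construction $\tp(\bar a/A)$ does not fork over $A'$.
\item[(iv)] Transitivity is formal: if the scheme of $\tp(\bar a/B)$ is $A$-conjugate to that of $\tp(\bar a/A)$ and the scheme of $\tp(\bar a/C)$ is $B$-conjugate to that of $\tp(\bar a/B)$, then the first conjugacy upgrades the second to an $A$-conjugacy.
\end{itemize}

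Symmetry (ii) is the delicate step and the one I expect to require the most care. The approach is via Morley sequences: using existence and boundedness, build an $A$-indiscernible sequence $(\bar a_i)_{i<\omega}$ of realizations of the non-forking extension of $\tp(\bar a/A)$ to $A\bar b$, then show that $\tp(\bar b/A\bar a_0)$ is realized by $\bar b$ together with the indiscernible tail, which forces $\tp(\bar b/A\bar a)$ to be a non-forking extension of $\tp(\bar b/A)$. This uses stationarity over models to guarantee that Morley sequences are genuinely independent, and the definability characterization to transport the property across. The single genuine obstacle in the whole plan is the definability-of-types theorem itself; once that is proved, the rest of Fact \ref{forkprop} unfolds as essentially formal consequences of scheme-conjugacy.
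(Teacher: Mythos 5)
The paper does not prove this Fact at all; it is stated as a black box, with the remark immediately preceding it that all unproved results in the subsection are taken from Pillay's \emph{An Introduction to Stability Theory}. So there is no in-paper argument to compare against — the relevant comparison is with the standard treatment in that reference, which is indeed built on definability of types and defining schemes, as in your sketch.

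That said, your sketch, while pointed in the right direction, has two genuine imprecisions that would need repair before it could serve as a proof. First, definability of types in a stable theory holds for types over \emph{models} (equivalently, over $\mathrm{acl}^{\mathrm{eq}}$-closed sets), not over arbitrary parameter sets $A$. Your derivations of (i) and (iii) invoke the defining scheme of $\mathrm{tp}(\bar a/A)$ directly, but for general $A$ this type need not be definable, and even when it is, the parameters $d_p\phi$ live in $\mathrm{acl}^{\mathrm{eq}}(A)$ or $M^{\mathrm{eq}}$ rather than in $A$; so the step in (iii) where you restrict to a small subset $A'\subseteq A$ of size $\leq |T|$ ``by construction'' does not go through as written. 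The standard repair is to first extend $\mathrm{tp}(\bar a/A)$ to a type over a model, take the canonical base of that extension (a set of imaginaries of size $\leq|T|$), and then relate $\mathrm{Cb}$ back to a small subset of $A$ — a nontrivial additional step. Second, there is a circularity concern in the order of proof: (i) existence for arbitrary $A$ is usually obtained by first establishing definability and uniqueness over models and $\mathrm{acl}^{\mathrm{eq}}$-closed sets, and your argument for (ii) via Morley sequences already uses existence and boundedness. This can be organized coherently (Pillay does exactly this), but the dependency graph among (i)–(v) has to be set up carefully: typically one proves definability over models, then stationarity over models (v), then symmetry (ii) and finite/local character, and only then existence over arbitrary sets. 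Your ``once definability is in hand, the rest unfolds formally'' framing understates these ordering constraints.
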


In fact the above properties of forking independence characterize stable theories in the sense that if a theory 
$T$ admits a sufficiently saturated model on which we can define an independence relation on triples of sets satisfying $(i)-(v)$, 
then $T$ is stable and the relation is exactly forking independence.  

The following lemma is useful in practice.

\begin{lemma}\label{ForkAlg}
 Let $A\subseteq B$. Then $\bar{a} \underset{A}{\forkindep} B$ if and only if $acl(\bar{a}A) \underset{acl(A)}{\forkindep} acl(B)$. 
\end{lemma}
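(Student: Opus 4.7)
The plan is to reduce everything to a single key fact and then grind through several applications of transitivity and symmetry from Fact \ref{forkprop}.

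\medskip

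\noindent\textbf{Key Claim:} If $\bar{c}\in \acl(X)$, then $\bar{c}\underset{X}{\forkindep} Y$ for every $Y\supseteq X$.

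\smallskip

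To prove this, fix a formula $\psi(\bar{x})$ over $X$ witnessing $\bar{c}\in \acl(X)$, with at most $n$ solutions in $\mathbb{M}$. Suppose some $\phi(\bar{x},\bar{d})\in \tp(\bar{c}/Y)$ forks over $X$, witnessed by a sequence $(\bar{d}_i)_{i<\omega}$ with $\tp(\bar{d}_i/X)=\tp(\bar{d}/X)$ and with $\{\phi(\bar{x},\bar{d}_i)\}$ being $k$-inconsistent. The $X$-formula asserting that $\phi(\bar{x},\bar{d})\wedge\psi(\bar{x})$ is realized transfers to every $\bar{d}_i$, so each $\phi(\bar{x},\bar{d}_i)\wedge\psi(\bar{x})$ is consistent; but it has at most $n$ solutions since $\psi$ does. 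Pigeonholing over $(k-1)n+1$ many indices produces a single element of $\psi(\mathbb{M})$ satisfying $\phi(\bar{x},\bar{d}_i)$ for $k$ distinct $i$, contradicting $k$-inconsistency.

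\medskip

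\noindent\textbf{Forward direction.} Assume $\bar{a}\underset{A}{\forkindep} B$. I would proceed in three steps.

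First, $\bar{a}\underset{A}{\forkindep} \acl(B)$: for any finite $\bar{c}\subseteq\acl(B)$, the Key Claim gives $\bar{c}\underset{B}{\forkindep} \bar{a}$, hence by symmetry $\bar{a}\underset{B}{\forkindep} \bar{c}$. Combining with $\bar{a}\underset{A}{\forkindep} B$ and transitivity yields $\bar{a}\underset{A}{\forkindep} B\bar{c}$, so $\bar{a}\underset{A}{\forkindep} \bar{c}$.

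Second, $\bar{a}\underset{\acl(A)}{\forkindep} \acl(B)$: transitivity applied to $A\subseteq \acl(A)\subseteq \acl(B)$ splits $\bar{a}\underset{A}{\forkindep} \acl(B)$ into two conjuncts, the second of which is exactly what we want.

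Third, $\acl(\bar{a}A)\underset{\acl(A)}{\forkindep} \acl(B)$: by symmetry the previous step gives $\acl(B)\underset{\acl(A)}{\forkindep} \bar{a}$, and we then repeat the argument of the first step (with base $\acl(A)$) to enlarge $\bar{a}$ to $\acl(\bar{a}A)=\acl(\bar{a}\acl(A))$, using the Key Claim over the base $\bar{a}\acl(A)$; symmetrising once more closes this direction.

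\medskip

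\noindent\textbf{Reverse direction.} Assume $\acl(\bar{a}A)\underset{\acl(A)}{\forkindep} \acl(B)$. Monotonicity (which is immediate, since a formula over a subset is a formula over the larger set) gives $\bar{a}\underset{\acl(A)}{\forkindep} B\cup\acl(A)$. The Key Claim together with symmetry gives $\bar{a}\underset{A}{\forkindep} \acl(A)$. Applying transitivity to $A\subseteq \acl(A)\subseteq B\cup\acl(A)$ combines these into $\bar{a}\underset{A}{\forkindep} B\cup\acl(A)$, and monotonicity gives $\bar{a}\underset{A}{\forkindep} B$.

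\medskip

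The main obstacle is the Key Claim, which is the only place where one must actually unfold the definition of forking; once it is in hand everything reduces to bookkeeping with transitivity and symmetry. A secondary subtlety is the passage from base $\acl(A)$ back down to $A$ in the reverse direction, which crucially uses that $\bar{a}$ is itself free from $\acl(A)$ over $A$, again a consequence of the Key Claim.
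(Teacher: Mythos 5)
The paper does not prove this lemma; it is one of the results in Section~2.2 that is stated without proof and deferred to \cite[Chapter 1, Sections 1--2]{PillayStability}. So there is no in-paper argument to compare against, and the question is just whether your proof is sound.

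It is. Your Key Claim (any tuple in $\acl(X)$ is nonforking-independent of everything over $X$) is the one place where the definition of forking has to be unwound, and your pigeonhole argument is correct: $\exists\bar{x}\,(\phi(\bar{x},\bar{y})\wedge\psi(\bar{x}))$ is an $X$-formula in $\tp(\bar{d}/X)$, so it transfers to each $\bar{d}_i$; each consistent $\phi(\bar{x},\bar{d}_i)\wedge\psi(\bar{x})$ picks a witness from the fixed finite set $\psi(\mathbb{M})$ of size $\le n$, and $(k-1)n+1$ indices force a repeated witness, contradicting $k$-inconsistency. After that, the forward direction correctly chains the Key Claim with symmetry, monotonicity, and transitivity from Fact~\ref{forkprop}: first to pass from $B$ to $\acl(B)$ on the right, then from $A$ to $\acl(A)$ in the base, then (by the symmetric version of the same enlargement, with base $\acl(A)$ and the Key Claim applied over $\bar{a}\acl(A)$) to pass from $\bar{a}$ to $\acl(\bar{a}A)$ on the left. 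The reverse direction is likewise fine: left-and-right monotonicity is immediate from the definition (as you note), the Key Claim plus symmetry gives $\bar{a}\underset{A}{\forkindep}\acl(A)$, and transitivity along $A\subseteq\acl(A)\subseteq B\cup\acl(A)$ closes it. This is essentially the textbook proof. One tiny presentational remark: Step 3 of the forward direction is phrased compressively (``repeat the argument... symmetrising once more''); spelling it out -- apply the Key Claim over $\bar{a}\acl(A)$, symmetrise to move $\acl(B)$ to the left, use transitivity along $\acl(A)\subseteq\bar{a}\acl(A)\subseteq\bar{a}\acl(A)\bar{e}$, and symmetrise again -- would make it airtight, but there is no gap.
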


The following theorem answers the question of how much ``information'' a type should include in order to be stationary.

\begin{thm}[Finite Equivalence Relation Theorem]
Let $p_1,p_2\in S_n(B)$ be two distinct types, let $A\subseteq B$ and suppose that $p_1, p_2$ 
both do not fork over $A$. Then there is a finite equivalence relation
$E(\bar{x},\bar{y})$ definable over $A$ such that $p_1(\bar{x})\cup p_2(\bar{y})\models \lnot E(\bar{x},\bar{y})$. 
\end{thm}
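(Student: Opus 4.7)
The plan is to prove the contrapositive: assume $p_1, p_2 \in S_n(B)$ are non-forking extensions over $A$, and that no $A$-definable finite equivalence relation $E$ satisfies $p_1(\bar{x}) \cup p_2(\bar{y}) \models \lnot E(\bar{x}, \bar{y})$. I aim to show that this forces $p_1 = p_2$.

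The first step is a standard compactness argument. Using that the conjunction of finitely many $A$-definable finite equivalence relations is again one, the hypothesis above allows us to simultaneously realise $p_1$ by a tuple $\bar{a}$ and $p_2$ by a tuple $\bar{c}$ so that $\bar{a}$ and $\bar{c}$ lie in the same class of every $A$-definable finite equivalence relation. Said differently, $\bar{a}$ and $\bar{c}$ share the same \emph{strong type} over $A$. The theorem thus reduces to the stationarity of strong types in stable theories: if $\bar{a}$ and $\bar{c}$ share a strong type over $A$, and both $\tp(\bar{a}/B)$ and $\tp(\bar{c}/B)$ do not fork over $A$, then $\tp(\bar{a}/B) = \tp(\bar{c}/B)$.

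The tool for this reduction is the definability of types, the hallmark feature of stable theories: for every complete type $q$ and every formula $\phi(\bar{x},\bar{y})$, there is a $\phi$-definition $d_q\phi(\bar{y})$ (a formula possibly with parameters) such that $\phi(\bar{x},\bar{b}) \in q$ if and only if $\mathbb{M} \models d_q\phi(\bar{b})$. Non-forking of $q$ over $A$ corresponds, on the level of definitions, to $d_q\phi$ being \emph{almost over $A$}, in the sense that its defining parameters have a finite $\Aut(\mathbb{M}/A)$-orbit. Such a finite orbit is itself the class of an $A$-definable finite equivalence relation, so two tuples with the same strong type over $A$ necessarily produce the same $\phi$-definition for every $\phi$. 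Applied to $\tp(\bar{a}/B)$ and $\tp(\bar{c}/B)$, this forces the two types to coincide.

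The main obstacle is the precise correspondence between non-forking over $A$ and having $\phi$-definitions whose parameters are almost $A$-invariant. This is where the absence of the order property is genuinely used: one needs finite local ranks (such as Morley or Shelah $\phi$-rank) to show that a non-forking extension is encoded by a canonical defining formula that is fixed up to a finite orbit by $\Aut(\mathbb{M}/A)$. The argument is classical but somewhat technical, and is presented in detail in Chapter~1 of \cite{PillayStability}.
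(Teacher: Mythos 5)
The paper states this theorem without proof, citing Pillay's \emph{Geometric Stability Theory} (Chapter~1, Sections 1--2) as the blanket reference for the subsection, and the argument given there is exactly the route you sketch: compactness to produce realizations with the same strong type over $A$, then definability of types together with the fact that $\phi$-definitions of non-forking extensions live in $\acl^{eq}(A)$, reducing to stationarity of types over $\acl^{eq}(A)$. Your outline is correct and matches the cited source; the only imprecision worth flagging is the parenthetical ``Morley or Shelah $\phi$-rank'' --- Morley rank is a global rank and need not be ordinal-valued here, so it is really the bounded local $\phi$-ranks that do the work --- but you rightly defer that technical step to the reference.
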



Shelah observed that ``seeing'' equivalence 
classes of definable equivalence relations as real elements gives a mild expansion of our 
theory, which we denote by $T^{eq}$, with many useful properties (we refer the reader to \cite[p.10]{PillayStability} for the 
construction). In this setting we denote by $acl^{eq}$ (respectively $dcl^{eq}$) the algebraic closure (respectively definable closure) 
calculated in $\mathbb{M}^{eq}$ (the monster model of $T^{eq}$, which actually is an expansion of $\mathbb{M}$).  

The following lemma is an easy application of the finite equivalence relation theorem.

\begin{lemma}\label{Stationarity} 
Let $A$ be a set of parameters in $\mathbb{M}$. Then $acl^{eq}(A)=dcl^{eq}(A)$ if and only if every type $p\in S_n(A)$ is stationary.
\end{lemma}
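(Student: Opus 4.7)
The plan is to prove the equivalence in two directions: the Finite Equivalence Relation Theorem will give the implication from $acl^{eq}(A) = dcl^{eq}(A)$ to stationarity, and a dichotomy driven by the power type $p^{(2)}$ will handle the converse.

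For the direction $acl^{eq}(A) = dcl^{eq}(A) \Rightarrow$ every $p \in S_n(A)$ is stationary, I will assume for contradiction that some $p$ has two distinct non-forking extensions $p_1, p_2 \in S_n(B)$ for some $B \supseteq A$. FERT then produces an $A$-definable finite equivalence relation $E$ such that $p_1(\bar{x}) \cup p_2(\bar{y}) \models \neg E(\bar{x}, \bar{y})$. The finitely many $E$-classes are permuted by $\Aut(\mathbb{M}/A)$, so as elements of $\mathbb{M}^{eq}$ they lie in $acl^{eq}(A) = dcl^{eq}(A)$ and hence each is fixed by $\Aut(\mathbb{M}/A)$; this forces each class to be defined by an $A$-formula as a set of tuples. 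Letting $\phi_i(\bar{x})$ be the $A$-formula defining $[\bar{a}_i]_E$ for realizations $\bar{a}_i \models p_i$, completeness of $p$ forces both $\phi_1, \phi_2 \in p$, contradicting that the classes they define are disjoint.

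For the converse, I assume every $p \in S_n(A)$ is stationary and suppose for contradiction that some $c \in acl^{eq}(A) \setminus dcl^{eq}(A)$ has $A$-orbit $\{c_1, \ldots, c_k\}$ with $k \geq 2$. Writing $c = [\bar{b}]_E$ for a $\emptyset$-definable equivalence relation $E$ and a tuple $\bar{b}$ from the home sort, I set $p = \tp(\bar{b}/A)$. By stationarity, $p^{(2)}$ (the type over $A$ of any independent pair of realizations of $p$) is well-defined and complete, so it must decide $E(\bar{x}, \bar{y})$. In the case $p^{(2)} \models \neg E$, I iterate Fact \ref{forkprop}(i) to build realizations $\bar{b}^{(1)}, \ldots, \bar{b}^{(k+1)}$ of $p$ with any two independent over $A$: by assumption they lie in pairwise distinct $E$-classes, but only $c_1, \ldots, c_k$ are available, so pigeonhole yields a contradiction. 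In the case $p^{(2)} \models E$, I choose $\bar{b}_1 \in c_1$ and $\bar{b}_1' \in c_2$ each $A$-conjugate to $\bar{b}$ (possible since $c_1, c_2$ are $A$-conjugate) and take $\bar{b}^{(2)}$ realizing a non-forking extension of $p$ to $A\bar{b}_1\bar{b}_1'$ via Fact \ref{forkprop}(i); monotonicity of non-forking yields $\bar{b}^{(2)} \forkindep_A \bar{b}_1$ and $\bar{b}^{(2)} \forkindep_A \bar{b}_1'$, and the hypothesis of this case forces $\bar{b}^{(2)} \in c_1 \cap c_2 = \emptyset$, again a contradiction.

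The main obstacle is the dichotomy in the converse: both horns must collapse, and this relies on the full interplay of stationarity (which makes $p^{(2)}$ a single complete type over $A$ deciding $E$), finiteness of the $A$-orbit of $c$ (which powers both pigeonhole and the disjoint-intersection arguments), and the existence of non-forking extensions from Fact \ref{forkprop}(i). Once the dichotomy is set up cleanly, each side falls out in one step.
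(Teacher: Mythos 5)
Your proof is correct, and in the forward direction it is essentially the paper's argument (both invoke the Finite Equivalence Relation Theorem), just carried out one level down: the paper applies FERT to place the extensions over $B = acl^{eq}(A)$ and concludes by noting a type over $A$ extends uniquely to $dcl^{eq}(A)$, whereas you unwind the imaginaries explicitly, observe that each $E$-class is fixed by $\Aut(\mathbb{M}/A)$ under the hypothesis and hence cut out by an $A$-formula $\phi_i$, and then derive the contradiction $\phi_1, \phi_2 \in p$ with $\phi_1 \wedge \phi_2$ inconsistent. Same idea, different granularity.

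The backward direction is where you genuinely diverge. The paper's argument is short and abstract: two distinct $A$-conjugates $e_1, e_2$ of $e$ are distinct elements of $acl^{eq}(A)$, hence have distinct types over $acl^{eq}(A)$; algebraic extensions never fork, so these are two distinct non-forking extensions of $tp(e/A)$; then since $e \in dcl^{eq}(\bar{b})$ for some real tuple $\bar{b}$, non-stationarity is pulled back to $tp(\bar{b}/A)$. You instead take the global stationarity hypothesis head-on and run a case analysis on whether $p^{(2)} \models E(\bar{x},\bar{y})$ or $p^{(2)} \models \neg E(\bar{x},\bar{y})$ — a dichotomy that is legitimate precisely because stationarity of $p$ makes $p^{(2)}$ a single complete type over $A$. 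In the $\neg E$ case you build $k+1$ pairwise $A$-independent realizations of $p$ (each therefore landing in one of the only $k$ conjugate $E$-classes) and apply pigeonhole; in the $E$ case you realize $p$ in two different classes $c_1, c_2$ and exhibit a common non-forking realization that would have to live in both, which is impossible. Both approaches are valid. The paper's is more economical and shows the non-stationarity witness directly (extensions to $acl^{eq}(A)$); yours is more hands-on, makes explicit use of the power-type machinery $p^{(2)}$ already introduced in Section 2, and never needs to talk about types over $acl^{eq}(A)$ or about algebraic extensions being non-forking. The only notational caveat is that you invoke "monotonicity" of non-forking, which is not listed in Fact \ref{forkprop}, though it is immediate from the definition (a subtype of a non-forking type is non-forking).
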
 
\begin{proof}
($\Rightarrow$) Suppose, for the sake of contradiction, 
that there is a type $q$ in $S_n(A)$ which is not stationary. 
Let $q_1,q_2$ be two distinct non-forking extensions of $q$ to some set $B\supset A$. 
By the finite equivalence relation theorem, we may take $B$ to be $acl^{eq}(A)$. Now the hypothesis 
yields a contradiction as a type over $A$ extends uniquely to $dcl^{eq}(A)$.\\
($\Leftarrow$) Suppose that there is $e$ in $acl^{eq}(A)\setminus dcl^{eq}(A)$. 
Then $tp(e/A)$ is not stationary. Indeed, consider two distinct images of $e$ under automorphisms fixing $A$, then 
these images have different types over $acl^{eq}(A)$ and these types do not fork over $A$.
By the construction of $T^{eq}$, there is a tuple $\bar{b}$ in $\mathbb{M}$, such that $e\in dcl^{eq}(\bar{b})$. 
Now, it is easy to see that $tp(\bar{b}/A)$ is not stationary.
\end{proof}






\subsection{Stable groups}
A group, $\mathcal{G}:=(G,\cdot,\ldots)$, in the sense of model theory is a structure 
equipped with a group operation, but possibly also with some additional
relations and functions. In the case where all additional
structure is definable by multiplication alone, we speak of a {\em pure group}.

We define a {\em stable group} to be a group whose first-order theory $\mathcal{T}h(G,\cdot,\ldots)$ is stable. 
Although for the purpose of this paper it would be enough to consider pure groups 
there is no harm in developing stable group theory in greater generality. 
All results in this subsection can be found in \cite{PoizatStableGroups}.

\begin{defi}
Let $\mathcal{G}$ be a group. We say $\mathcal{G}$ is connected if there is no definable
proper subgroup of finite index.
\end{defi}

\begin{defi}
Let $\mathcal{G}$ be a stable group. Let $X$ be a definable subset of $G$.
Then $X$ is left (right) generic if finitely many left (right) translates of $X$ by
elements of $G$, cover $G$.
\end{defi}

As in a stable group $\mathcal{G}$ a definable set $X\subseteq G$ is left generic 
if and only if it is right generic, we simply say generic.

\begin{defi}
Let $\mathcal{G}$ be a stable group and let $A\subseteq G$. A type $p(x)\in S_1(A)$ is generic if every formula in $p(x)$ is generic.
\end{defi}


\begin{lemma}
Let $p(x)$ be a generic type of the stable group $\mathcal{G}$. Then any non-forking extension 
of $p(x)$ is generic.
\end{lemma}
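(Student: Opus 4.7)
The plan is to exploit the \emph{translation criterion} for genericity in a stable group: a type $r \in S_1(C)$ is generic if and only if for every realization $a$ of $r$ and every $g \in \mathbb{M}$ with $g \underset{C}{\forkindep} a$, one has $g \cdot a \underset{C}{\forkindep} g$. Granting this characterization (which is standard in the Poizat reference), the conclusion follows by transporting independence between the bases $A$, $Ag$ and $Bg$ using only the forking axioms of Fact \ref{forkprop}.

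First, I would fix a realization $a$ of $q$, so that the non-forking hypothesis reads $a \underset{A}{\forkindep} B$. Choose $g \in \mathbb{M}$ independent from $a$ over $B$, i.e.\ $g \underset{B}{\forkindep} a$. Symmetry and transitivity then combine to give $a \underset{A}{\forkindep} Bg$, and in particular $g \underset{A}{\forkindep} a$. Since $a$ realizes the generic type $p \in S_1(A)$, the translation criterion applied at the base $A$ yields $g \cdot a \underset{A}{\forkindep} g$.

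Next, I would upgrade this to independence at the base $Bg$. The point is that $a$ and $g \cdot a$ are interdefinable over $g$, so forking independence of $a$ from $Bg$ over $Ag$ (obtained from $a \underset{A}{\forkindep} Bg$ via transitivity) transports to forking independence of $g \cdot a$ from $Bg$ over $Ag$. Combining this with $g \cdot a \underset{A}{\forkindep} g$ and chasing transitivity once more verifies the translation criterion in the reverse direction at the base $Bg$, so $\tp(g \cdot a / Bg)$ is generic. Since left translation by $g^{-1}$ is definable over $Bg$, the type $\tp(a / Bg)$ is also generic, and restricting to $B$ preserves the genericity of every formula; hence $q = \tp(a/B)$ is generic.

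The main obstacle is the ``upgrade'' step, where one has to carefully transport independence from the base $A$ up to the base $Bg$ using only symmetry, transitivity and monotonicity of forking. This is precisely the place where the hypothesis that $q$ is a non-forking extension of $p$, together with the independent choice of $g$, is used in an essential way; the rest of the proof is essentially formal once the translation criterion is in hand.
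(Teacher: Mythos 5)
The paper does not prove this lemma itself; it is one of the facts in the stable groups subsection that are attributed wholesale to \cite{PoizatStableGroups}, so there is no in-paper argument to compare against. Your use of the translation criterion (that a type $r\in S_1(C)$ of the group is generic iff for every $a\models r$ and every group element $g$ with $g \underset{C}{\forkindep} a$ one has $ga \underset{C}{\forkindep} g$) is a perfectly standard and valid route, and it is close to how Poizat himself treats the matter.

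However, the final assembly of the proof is off. You conclude that ``the translation criterion in the reverse direction at the base $Bg$'' is verified, hence $\tp(ga/Bg)$ is generic, and then pull back by $g^{-1}$ and restrict to $B$. To legitimately invoke the criterion to certify genericity of $\tp(ga/Bg)$, you would have to check the translation condition for an \emph{arbitrary} element $h$ with $h \underset{Bg}{\forkindep} ga$; your transitivity chase only involves the single element $g$, so that inference does not follow. (And if instead you were implicitly using ``a non-forking extension of the generic $\tp(ga/A)$ to $Bg$ is generic,'' that is the very statement being proved, so the argument would be circular.) Fortunately, the detour through $Bg$ is unnecessary: the computations you carry out -- $a \underset{A}{\forkindep} Bg$ (hence $a \underset{Ag}{\forkindep} Bg$ and, by interdefinability of $a$ and $ga$ over $g$, $ga \underset{Ag}{\forkindep} Bg$), together with $ga \underset{A}{\forkindep} Ag$ from the criterion at base $A$ -- combine by transitivity to give $ga \underset{A}{\forkindep} Bg$, and then $ga \underset{B}{\forkindep} g$. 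Since $g$ was an arbitrary group element with $g \underset{B}{\forkindep} a$, this verifies the translation criterion directly at base $B$, so $q=\tp(a/B)$ is generic. You should also be slightly careful to quantify over all such $g$ rather than ``choose'' one, and to require $g$ to lie in the group $G(\mathbb{M})$ rather than in $\mathbb{M}$ at large.
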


It is not hard to see the following:

\begin{fact}
Let $\mathcal{G}$ be a stable group. Then $\mathcal{G}$ is connected if and only if there is, over any set of parameters, a unique generic type.
\end{fact}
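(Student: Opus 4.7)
The proof splits naturally into two independent directions.

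\emph{Necessity of connectedness.} Suppose $\mathcal{G}$ is not connected, so there is a proper definable subgroup $H \leq G$ of finite index; write $G = g_0 H \sqcup \cdots \sqcup g_k H$ with $g_0 = 1$. Each coset $g_i H$ is a generic formula, as its finitely many translates $g_j g_i^{-1} \cdot (g_i H) = g_j H$ cover $G$. A standard result in stable group theory \cite{PoizatStableGroups} guarantees that every generic formula over a parameter set $A$ is contained in some complete generic type in $S_1(A)$. Applying this to the $k+1$ pairwise incompatible formulas $x \in g_i H$ yields $k+1$ distinct generic types over any $A$ containing the parameters of $H$ and of the $g_i$.

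\emph{Sufficiency of connectedness.} Assume $\mathcal{G}$ is connected, and suppose two distinct generic types $p, q \in S_1(A)$ existed. I first reduce to the case where $A = M$ is a model: extend $A$ to such an $M$ and take non-forking extensions $p', q' \in S_1(M)$. By the preceding lemma these are again generic, and they remain distinct since they extend distinct complete types. So it suffices to handle $p \in S_1(M)$ generic. Here the key tool is the stabilizer machinery: define $\Stab(p) = \{g \in G : g \cdot p = p\}$, where $g \cdot p$ is interpreted through the non-forking product. A central theorem in the model theory of stable groups \cite{PoizatStableGroups} shows that $\Stab(p)$ is a type-definable subgroup of bounded index, in fact equal to the connected component $G^0$. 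Connectedness gives $G^0 = G$, so $\Stab(p) = G$. Combined with the transitive action of $G$ on the space of generic types over $M$, this forces $p = q$.

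\emph{Main obstacle.} The substantive work is the stabilizer theorem -- coherently defining $\Stab(p)$ via non-forking products and identifying it with $G^0$ -- together with the transitivity of the $G$-action on generic types. Both are well developed in stable group theory, so that, once they are invoked, the actual verification of the equivalence is short.
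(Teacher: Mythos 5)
The paper does not actually supply a proof of this Fact: it is stated with the remark that ``it is not hard to see'' and is implicitly deferred to \cite{PoizatStableGroups}, from which all results in that subsection are drawn. Your argument is a correct reconstruction of the standard one. The necessity direction (cosets of a definable finite-index subgroup are generic formulas, the non-generic formulas form an ideal over any parameter set, hence each coset formula extends to a generic type, and the $k+1$ resulting types are pairwise distinct by disjointness) is exactly right. The sufficiency direction is also sound: reduction to a model via non-forking extensions preserves genericity and distinctness, and then the identification $\Stab(p)=G^0$ for a generic $p$ over a model, together with the transitive translation action of $G$ on generic types, yields uniqueness when $G^0=G$. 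In fact the weaker inclusion $\Stab(p)\supseteq G^0$, which is slightly easier to extract from Poizat, already suffices for your contradiction, since transitivity gives $q=g\cdot p$ and connectedness forces $g\in G^0\subseteq\Stab(p)$. One small point worth flagging explicitly: the Fact as stated asserts existence as well as uniqueness of a generic type over every parameter set, and your argument only addresses uniqueness; but existence of generics over an arbitrary set is itself a standard consequence of the ideal property of non-generics, so this is a presentational remark rather than a gap.
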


This has an immediate corollary.

\begin{cor}\label{gensta}
Let $\mathcal{G}$ be a connected stable group. Then every generic type is stationary. In fact, generic types 
are exactly the non-forking extensions of the generic type over $\emptyset$.
\end{cor}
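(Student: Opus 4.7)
The plan is to deduce both assertions directly from the two ingredients just stated: the lemma saying that non-forking extensions of generic types are generic, and the fact that connectedness is equivalent to the uniqueness of the generic type over any set of parameters. Existence of non-forking extensions (Fact \ref{forkprop}(i)) provides the remaining glue.

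First I would prove stationarity. Let $p \in S_1(A)$ be generic and let $B \supseteq A$. By existence, $p$ admits at least one non-forking extension $q \in S_1(B)$, and by the preceding lemma $q$ is generic over $B$. Since $\mathcal{G}$ is connected, there is a \emph{unique} generic type over $B$, so $q$ is determined. Hence $p$ has exactly one non-forking extension to $B$, which is stationarity.

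Next I would identify the generic types with the non-forking extensions of the generic type $p_0 \in S_1(\emptyset)$ (which exists and is unique by connectedness). One direction is immediate from the lemma: any non-forking extension of $p_0$ is generic. For the converse, let $q \in S_1(A)$ be generic. Using existence again, choose a non-forking extension $q' \in S_1(A)$ of $p_0$; by the lemma $q'$ is generic over $A$, and by the uniqueness provided by connectedness $q = q'$, so $q$ is a non-forking extension of $p_0$.

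There is no real obstacle here: the content of the corollary is essentially a bookkeeping combination of the uniqueness-of-generic-type characterization of connectedness with the invariance of genericity under non-forking extensions. The only point one needs to be a little careful about is invoking the right statement of connectedness---namely that uniqueness holds \emph{over any parameter set}, not merely over $\emptyset$---so that the argument applies to an arbitrary $B$ when proving stationarity.
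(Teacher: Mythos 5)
Your proof is correct and is precisely the ``immediate'' argument the paper has in mind (the paper gives no written proof, labelling the corollary an immediate consequence of the preceding lemma and fact): you combine the existence of non-forking extensions, the preservation of genericity under non-forking extensions, and the uniqueness of the generic type over any parameter set that characterizes connectedness. Nothing to add.
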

 
\subsection{Torsion-free hyperbolic groups}

In this subsection we see torsion-free hyperbolic groups as $\mathcal{L}$-structures in their 
natural language $\mathcal{L}:=\{ \cdot, ^{-1},1 \}$, i.e. the language of groups. We denote 
by $\F_n:=\langle e_1,\ldots e_n\rangle$ the free group of rank $n$ and we assume that $n>1$. We start 
with the deep result of Kharlampovich-Myasnikov \cite{KharlampovichMyasnikov} and Sela \cite{Sel6}.

\begin{thm}
Let $\F$ be a non abelian free factor of $\F_n$. Then $\F\prec\F_n$.
\end{thm}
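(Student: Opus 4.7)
The plan is to apply the Tarski--Vaught test: I need to show that for every first-order formula $\phi(\bar x, \bar a)$ with parameters $\bar a \in \F$, if $\F_n \models \exists \bar x\, \phi(\bar x, \bar a)$ then $\F \models \exists \bar x\, \phi(\bar x, \bar a)$. After putting $\phi$ in prenex normal form and using that atomic formulas in the language of groups have the shape $w=1$, this reduces to transferring satisfiability of Boolean combinations of systems of equations and inequations under a quantifier prefix. The decomposition $\F_n = \F * \F'$ gives a natural retraction $\pi : \F_n \twoheadrightarrow \F$ which will be the fundamental tool for pulling witnesses back from $\F_n$ to $\F$.

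The first substantial case is a Merzlyakov-type statement: if $\F_n \models \forall \bar y\, [\Sigma(\bar y, \bar a) = 1 \to \exists \bar x\, w(\bar x, \bar y, \bar a) = 1]$, then there must exist a \emph{formal solution}, i.e.\ a solution to $w = 1$ in the coordinate group of $\Sigma$ over $\F$. Applying the retraction $\pi$ to such a formal solution yields the required witness in $\F$. This handles the positive $\forall\exists$-case.

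For the general (inequational, iterated) case, the main machinery is the \emph{Makanin--Razborov diagram} parameterising $\Hom(G, \F_n)$ for a finitely generated $G$: a finite rooted tree whose nodes are limit groups, whose leaves are free groups, and which encodes all homomorphisms from $G$ to $\F_n$ via finitely many families of modular automorphisms. Given a $\Pi_2$-sentence $\forall \bar y\, \exists \bar x\, \phi(\bar x, \bar y, \bar a)$, one analyses the finite branches of the Makanin--Razborov diagram of the coordinate group associated to $\bar y$, and at each branch produces a finite family of formal solutions; inequations are handled by \emph{test sequences} chosen generically enough to separate the required subvarieties. One then proceeds by induction on quantifier depth, at each stage promoting a $\Sigma_{k+1}$-statement over $\F_n$ to a Boolean combination of $\Sigma_k$-statements via the Makanin--Razborov analysis, and using the retraction $\pi$ to transfer witnesses.

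The main obstacle is controlling this induction on quantifier complexity. Each new alternation requires not only understanding the solution set of an equational system but also the entire collection of formal solutions together with their \emph{rigidity} and their interaction with the cyclic JSJ decomposition of the coordinate group; this forces one to introduce Sela's notion of \emph{closure} of a hyperbolic tower (or the Kharlampovich--Myasnikov NTQ systems) to obtain the right inductive hypothesis. The deepest inputs --- the shortening argument, the classification of rigid and solid limit groups, and the first Bers--Vasquez--like finiteness results for modular automorphisms --- all enter at this stage. Once these are in place, the Makanin--Razborov analysis on each quantifier block closes the Tarski--Vaught induction and gives $\F \prec \F_n$.
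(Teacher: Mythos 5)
The paper does not prove this theorem: it is stated as the foundational result of Kharlampovich--Myasnikov \cite{KharlampovichMyasnikov} and Sela \cite{Sel6} and used as a black box, which is the appropriate thing to do given that the original proofs run to hundreds of pages. Your sketch correctly identifies the broad architecture of those proofs --- the Tarski--Vaught reduction, Merzlyakov's theorem and formal solutions for the positive $\forall\exists$-case, Makanin--Razborov diagrams, test sequences for inequations, and the quantifier-complexity induction via hyperbolic towers (Sela) or NTQ systems (Kharlampovich--Myasnikov). So as a description of what is cited, it is roughly faithful.

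However, there are real inaccuracies and a fundamental gap. The claim that ``applying the retraction $\pi$ to such a formal solution yields the required witness in $\F$'' is misleading: the retraction $\F_n \twoheadrightarrow \F$ preserves equations but not inequations, so it cannot be the step that transfers general first-order witnesses. In Merzlyakov's theorem the formal solution is constructed so that its \emph{coefficients} already lie in the coefficient group $\F$; the relevant map is a substitution into the formal solution, not the retraction, and even in the positive $\forall\exists$-case considerable work is needed to see this. I also do not recognize ``Bers--Vasquez-like finiteness results''; you may mean the finiteness of the Makanin--Razborov diagram, Sela's finiteness theorem for rigid and solid limit groups, or the bound on exceptional solutions --- in any case that name should be checked. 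Most importantly, the induction on quantifier depth is not a routine induction: it is the content of Sela's sieve procedure (papers V--VI) and of the corresponding Kharlampovich--Myasnikov elimination, and your sketch does not engage with why the induction closes (what the correct inductive statement is, how one controls the growth of the tower structures, why the procedure terminates). As written, this is a table of contents for the literature rather than a proof.
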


This allows us to denote by $T_{fg}$ the common theory of non abelian free groups. Since 
connectedness (in the sense defined in the previous subsection) is a first-order property, we can state a result of 
Poizat \cite{PoizatGenericAndRegular} in the following way.

\begin{thm}
$T_{fg}$ is connected.
\end{thm}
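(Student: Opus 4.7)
The goal is to show that $T_{fg}$ is connected, meaning that in every model of $T_{fg}$ (equivalently, in $\F_n$ for some $n \geq 2$), there is no proper definable subgroup of finite index. My plan is to argue by contradiction: suppose $H = \phi(\F_n, \bar{a})$ is a proper definable subgroup of finite index $k \geq 2$, and derive a contradiction by exploiting the combination of (i) the elementary embedding property of free factors (just stated), (ii) the richness of $\Aut(\F_n)$, and (iii) the fact that any such $H$ must be preserved by every element of $\Aut(\F_n/\bar{a})$.

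The first step is to pass to an elementary extension. Since $\F_n$ sits as a free factor in $\F_{n+m} := \F_n * \F_m$ for any $m \geq 2$, the Kharlampovich--Myasnikov--Sela theorem gives $\F_n \prec \F_{n+m}$, and the formula $\phi(x, \bar{a})$ continues to define a subgroup $H'$ of $\F_{n+m}$ of finite index (the exact value of the index being first-order controlled by $k$). Now every automorphism $\alpha \in \Aut(\F_m)$ extends to an automorphism $\tilde{\alpha}$ of $\F_{n+m} = \F_n * \F_m$ that is the identity on $\F_n$ and hence fixes $\bar{a}$; such $\tilde{\alpha}$ preserves $H'$, so $K := H' \cap \F_m$ is a characteristic subgroup of $\F_m$ of finite index. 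More importantly, Nielsen-like automorphisms $\tilde{\alpha}_g: x \mapsto xg$ (for $g \in \F_n$ and $x$ a primitive generator of $\F_m$) also fix $\bar{a}$ and preserve $H'$, so $H'$ is invariant under a very large group of automorphisms of $\F_{n+m}$ that fixes $\bar{a}$ pointwise.

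The main obstacle is that purely $\Aut(\F_m)$-invariant (characteristic) subgroups of finite index in $\F_m$ do exist in abundance (for any prime $p$, the kernel of $\F_m \twoheadrightarrow (\Z/p\Z)^m$ is one), so the characteristic condition alone is insufficient. The plan to overcome this is to use the combined action of the ``vertical'' automorphisms (those in $\Aut(\F_m)$) and the ``mixing'' automorphisms $\tilde{\alpha}_g$: starting from a coset representative $w \notin H'$ and using that for any $g \in \F_n$ the element $wg$ is in the same coset as $\tilde\alpha_g(w) = wg$ (wait — actually since $\tilde\alpha$ permutes cosets and fixes $H'$, one tracks orbits on $\F_{n+m}/H'$), one shows that the induced action of the free monoid generated by these automorphisms on the finite coset space $\F_{n+m}/H'$ must be highly transitive, forcing enough elements into the same coset as the identity to conclude $H' = \F_{n+m}$.

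An alternative approach, perhaps cleaner, would bypass the characteristic-subgroup step by invoking Fact \ref{gensta}: connectedness is equivalent to uniqueness of the generic $1$-type over every parameter set. One would then identify the generic type with the type of an element primitive relative to $\bar{a}$ (i.e., an element $e$ such that $\F_n$ admits a decomposition $\langle \bar{a}\rangle * \langle e\rangle * \F'$, passing to an elementary extension if needed), use the homogeneity of $\F_n$ to see that all such elements share a common type over $\bar{a}$, and argue that every generic type over $\bar{a}$ must be of this form — the hard part here being to rule out the possibility of several ``inequivalent'' flavors of primitivity relative to $\bar{a}$ producing distinct generic types, which would require the JSJ-theoretic and homogeneity machinery developed later in the paper.
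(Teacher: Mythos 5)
The paper offers no proof here: the statement is cited directly from Poizat \cite{PoizatGenericAndRegular}, using only that connectedness is a first-order property and that all non-abelian free groups share the theory $T_{fg}$. So your attempt is being judged on its own merits. In your first sketch you set up exactly the right framework (pass to an elementary extension, preserve the index because ``index exactly $k$'' is first-order, note that $H'$ is stable under $\Aut(\F_{n+m}/\bar a)\supseteq\Aut(\F_{n+m}/\F_n)$), but the concluding claim --- that the automorphism action on the coset space is ``highly transitive, forcing enough elements into the same coset as the identity'' --- is neither proved nor, as stated, correct: transitivity on the nontrivial cosets yields nothing when $[\F_{n+m}:H']=2$, for instance, since there is then only one nontrivial coset. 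What closes the argument is a concrete computation, not a transitivity statement.

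Here is one way to finish your first approach. Pass all the way to $\F_\omega=\F_n*\langle f_1,f_2,\dots\rangle\succ\F_n$, replace $H'$ by its normal core $N$ (still $\bar a$-definable, normal, of finite index), and let $\pi:\F_\omega\to Q:=\F_\omega/N$. Every $\tilde\alpha\in\Aut(\F_\omega/\F_n)$ preserves $N$ and so descends to $\bar\alpha\in\Aut(Q)$ with $\pi\tilde\alpha=\bar\alpha\pi$. Since $\Aut(Q)$ is finite and the infinite symmetric group on $\{f_i\}$ has no proper finite-index subgroup, the permutations of the $f_i$ act trivially on $Q$, so all $\pi(f_i)$ are equal, say $=q$. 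The Nielsen move $f_1\mapsto f_1f_2$ then sends $q=\pi(f_1)$ to $q^2$ while also sending $q=\pi(f_2)$ to $q$; well-definedness of the induced map on $Q$ forces $q^2=q$, hence $q=1$. Finally, for any $g\in\F_n$ the automorphism $f_2\mapsto f_2\,f_1^{-1}gf_1$ fixes $\F_n$ pointwise (hence $\bar a$), yet sends $\pi(f_2)=1$ to $\pi(g)$, forcing $\pi(g)=1$. Thus $Q=1$, a contradiction. None of this falls out of ``transitivity''; you need the finiteness of $\Aut(Q)$ and these explicit twists. Your second, generic-type approach is a legitimate direction but, as you yourself note, it leans on homogeneity and JSJ machinery that the paper only develops later; moreover Theorem \ref{GenericInFree} already presupposes connectedness and so cannot be used as input without circularity.
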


As a matter of fact the theory of every (non-cyclic) torsion-free hyperbolic group is connected (see \cite{OuldHoucineHomogeneity}).

\begin{thm}
Let $G$ be a torsion-free hyperbolic group not elementarily equivalent to a free group. Then $\mathcal{T}h(G)$ is connected.
\end{thm}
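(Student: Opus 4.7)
The plan is to argue by contradiction, supposing that $G$ admits a proper definable subgroup $H$ of finite index. I would first reduce to the case where $H$ is normal by passing to the normal core $H_0 = \bigcap_{g \in G} g H g^{-1}$, which equals the intersection of the finitely many $G$-conjugates of $H$ (there are finitely many since $[G:H] < \infty$), and hence is itself definable, normal, and of finite index. So I may fix a formula $\phi(x, \bar{a})$ defining a normal finite-index subgroup $H \triangleleft G$ of index $n \geq 2$.

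The main tools I would rely on are: the stability of $\mathcal{T}h(G)$ (Sela); the homogeneity of $G$ established by Ould Houcine \cite{OuldHoucineHomogeneity}; and the fact that $G$ is non-elementary (being torsion-free hyperbolic and non-cyclic), so that it contains non-abelian free subgroups. Homogeneity gives that, for every $\sigma \in \Aut(G)$ fixing $\bar{a}$ pointwise, one has $\sigma(H) = H$, so $H$ is a union of orbits of $\Aut(G/\bar{a})$ acting on $G$. In parallel, for any non-abelian free subgroup $F \leq G$, the intersection $F \cap H$ is of finite index in $F$, which in view of Poizat's theorem for $T_{fg}$ is already suspicious.

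To derive a contradiction, my plan is to use the relative JSJ machinery to describe the group $\Aut(G/\bar{a})$ up to finite index, and to exhibit an $\Aut(G/\bar{a})$-orbit meeting both $H$ and $G \setminus H$. Concretely, I would analyze the relative cyclic JSJ decomposition of $G$ over $\bar{a}$: its surface-type and abelian-type vertex groups produce a rich group of modular automorphisms of $G$ fixing $\bar{a}$, and I would locate an element $g \in G$ whose orbit under these modular automorphisms straddles two distinct cosets of $H$, contradicting the invariance of $H$ just established.

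The main obstacle is that, since $G$ is not elementarily equivalent to a free group, Poizat's original argument for $T_{fg}$ does not transfer verbatim, and one cannot simply reduce to a free subgroup: the JSJ of $G$ over $\bar{a}$ may have no surface-type pieces, in which case the modular automorphism group is too small to run the orbit argument directly. Handling this degenerate case is the crux of the proof and would require invoking Sela's structural analysis of torsion-free hyperbolic groups (via elementary cores and hyperbolic towers) to guarantee, case by case, that enough non-trivial automorphisms of $G$ fixing $\bar{a}$ exist to force $H = G$.
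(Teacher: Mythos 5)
The paper does not actually prove this statement; it simply cites Ould Houcine \cite{OuldHoucineHomogeneity} for the result, so there is no in-paper argument to compare against. That said, your proposal has two genuine gaps, one of which you flag yourself and one which is more structural.

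First, as you acknowledge, the entire ``degenerate case'' is left to a hope that Sela's tower machinery will produce enough automorphisms ``case by case.'' This is not an argument; it is the hard content, and you don't indicate what the cases are or why they resolve.

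Second, and more seriously, the orbit-straddling strategy cannot work in the generality you need. You propose to show that some $\Aut(G/\bar{a})$-orbit meets both $H$ and $G\setminus H$, contradicting the $\Aut(G/\bar{a})$-invariance of $H$. But a definable finite-index subgroup need not fail to be automorphism-invariant: the verbal subgroup $G^n[G,G]$ is characteristic, hence fixed by \emph{all} of $\Aut(G)$, and its cosets are permuted rather than merged by any automorphism. If such a subgroup happened to be definable (which is exactly what you are trying to rule out), your argument would detect nothing, since every orbit would stay inside its coset. In the extreme case $\bar{a}=\emptyset$, \emph{any} $\emptyset$-definable subgroup is automatically $\Aut(G)$-invariant, so the orbit criterion is vacuous there. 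The contradiction must therefore use definability in a way that goes beyond mere orbit invariance; this is where the standard route in stable group theory diverges from your plan. One usually shows directly that the theory has a unique generic type over any parameter set (for $T_{fg}$ this is Pillay's description of the generic as the type of a primitive element; for hyperbolic groups not equivalent to a free group one needs the analogue from Ould Houcine's analysis). Uniqueness of the generic then gives $G=G^0$ by the general theory of stable groups, and your reduction to a normal $H$ is never needed. I'd recommend abandoning the orbit argument in favour of a direct computation of the generic type, or simply following Ould Houcine's proof.
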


On the other hand Sela \cite{SelaStability} proved the following: 

\begin{thm}
Let $G$ be a (non-cyclic) torsion-free hyperbolic group. Then $\mathcal{T}h(G)$ is stable.
\end{thm}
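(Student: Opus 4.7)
The plan is to establish that no first-order formula has the order property in $G$, which by definition gives stability. The approach, due to Sela, is to obtain a sufficiently tight structural description of the definable sets in $G$ and to use it to preclude the existence of an infinite definable chain.

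First, I would invoke Sela's solution to the Tarski problem (in the free case) together with its extension to torsion-free hyperbolic groups: every formula in the language of groups is equivalent, modulo $\mathcal{T}h(G)$, to a Boolean combination of formulas of a very restricted shape, expressible in terms of the $G$-points of systems of equations and inequations. The cornerstone is the description of these ``Diophantine sets'' via Makanin--Razborov diagrams: homomorphisms from a finitely generated group $H$ to $G$ factor through a finite tree of proper epimorphisms $H\twoheadrightarrow L_1\twoheadrightarrow L_2\twoheadrightarrow\cdots$ onto limit groups. Each stratum corresponds to a limit group $L$ equipped with its cyclic JSJ decomposition relative to the appropriate parameter subgroup, and the modular automorphism group $\Mod(L)$ (generated by Dehn twists along the JSJ edges and mapping classes of surface-type vertex groups) acts on the associated solution set with geometrically controlled orbits.

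Second, I would use this structure to rule out the order property. Suppose, for contradiction, that $\phi(\bar x,\bar y)$ has the order property, witnessed by sequences $(\bar a_n)_{n<\omega}$ and $(\bar b_n)_{n<\omega}$ in $G$. Then $(\phi(G,\bar b_n))_{n<\omega}$ yields an infinite, strictly ordered family of definable subsets. Tracking this family through the finite Makanin--Razborov tree of $\phi$, the \emph{shortening argument} of Rips--Sela allows one to replace each witness by a ``short'' representative in its $\Mod(L)$-orbit, leaving only finitely many essentially distinct configurations of witnesses; this contradicts the infiniteness of the chain.

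The main obstacle is the sheer depth of the structural theorem on which the first step rests. It requires Sela's full Diophantine geometry over $G$---limit groups, test sequences, the distinction between rigid and solid limit groups, the formal solution / closure machinery, and the resulting classification of definable sets up to a controlled notion of equivalence---which itself depends on the relative JSJ theory and the shortening argument. Once all this is in place, the deduction of stability via the incompatibility of shortening with an infinite order is comparatively natural; however, the geometric input itself is highly nontrivial and constitutes the bulk of Sela's work.
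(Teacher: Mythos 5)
The paper does not prove this theorem: it is quoted verbatim from Sela's work (cited as \cite{SelaStability}) and used as a black box. So there is no ``paper's own proof'' to compare against; your task here is to assess your sketch against Sela's actual argument.

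Your sketch correctly locates the framework Sela built---Makanin--Razborov diagrams, limit groups, the relative cyclic JSJ, modular groups, and the shortening argument---and you are honest that this machinery constitutes the bulk of the work. But the concluding step as you describe it has a real gap. You propose that, given a formula $\phi(\bar x,\bar y)$ with the order property witnessed by $(\bar a_n),(\bar b_n)$, one tracks the witnesses through the MR tree, shortens each into a bounded set of $\Mod$-orbit representatives, and then concludes by finiteness. This does not work as stated. The shortening argument bounds and parametrises the solution set of a fixed system of equations; it does not by itself control the combinatorics of an arbitrary definable relation, and in particular finitely many ``configurations'' of shortened witnesses is perfectly compatible with an infinite definable linear order, since the order lives on the indices $n,m$ and not on which stratum of the MR tree a given homomorphism factors through. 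Sela's actual route is different and substantially longer: he first proves \emph{equationality} of Diophantine (i.e.\ $\exists$-definable) sets---a strictly stronger property than stability at the formula level---and then, using the full quantifier-simplification theory (formal solutions, closures of MR diagrams, the rigid/solid dichotomy and the boundedness of ``exceptional'' solutions), shows that every definable set is a Boolean combination of sets coming from a stable family. The passage from the Diophantine geometry to stability is itself an entire paper, not a ``comparatively natural'' afterthought, and it does not proceed by directly contradicting an order-property chain via shortening. So the plan as written would not close; the missing idea is the equationality of Diophantine sets together with the envelope/closure analysis that reduces general formulas to them.
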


Thus, in every theory of a (non-cyclic) torsion-free hyperbolic group there is a unique generic type over any set of parameters. 

We specialize in $T_{fg}$ and we denote 
by $p_0$ the generic type over the empty set. By Corollary \ref{gensta} the generic type $p_0$ is stationary, thus we can
define $p_0^{(\kappa)}$ to be the type of $\kappa$-independent realizations of $p_0$. Pillay \cite{PillayGenericity} gave an 
understanding of the generic type in terms of its solution set in $\F_n$, in fact 
he proved more generally

\begin{thm} \label{GenericInFree}
An $m$-tuple $a_1,\ldots,a_m$ realizes $p_0^{(m)}$ in $\F_n$ if and only if $a_1,\ldots,a_m$ is part of a basis of $\F_n$. 
\end{thm}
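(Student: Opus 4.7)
The plan is to prove the two directions separately using three main ingredients: the homogeneity of $\F_n$ (already invoked in the introduction of the paper), Theorem \ref{FreeFactorsIntro} to deduce forking independence from free decompositions, and the fact that $T_{fg}$ is stable and connected, so that there is a unique generic type $p_0$ over $\emptyset$, which is stationary by Corollary \ref{gensta}; consequently $p_0^{(m)}$ is a well-defined complete type.

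For the direction ($\Leftarrow$), I would first observe that $\Aut(\F_n)$ acts transitively on ordered $m$-tuples which are part of a basis: one extends such a tuple to a basis, and any two ordered bases are related by an automorphism. Thus every such $(a_1, \ldots, a_m)$ has the same type over $\emptyset$ as the standard tuple $(e_1, \ldots, e_m)$, and it suffices to show that $(e_1, \ldots, e_m)$ realizes $p_0^{(m)}$. The independence of $\{e_1, \ldots, e_m\}$ over $\emptyset$ follows from applying Theorem \ref{FreeFactorsIntro} with the trivial free factor $A = \{1\}$ and the decomposition $\F_n = \langle e_i \rangle * \langle e_j : j \neq i \rangle$, for each $i$. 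By homogeneity $\tp(e_i) = \tp(e_1)$ for all $i$, so the remaining point is to identify $\tp(e_1)$ with $p_0$.

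This identification is the main obstacle. I would argue that any formula $\phi(x)$ without parameters satisfied by $e_1$ is, being $\Aut(\F_n)$-invariant, satisfied by every primitive element of $\F_n$. The key group-theoretic input is that the set of primitive elements is a generic subset of $\F_n$ in the sense of the previous subsection, i.e.\ finitely many of its left translates cover $\F_n$; this can be extracted from Nielsen--Whitehead theory by showing that every element of $\F_n$ is a product of a bounded number of primitive elements. Granting this, every formula in $\tp(e_1)$ defines a generic set in $\F_n$, so $\tp(e_1)$ is a generic type over $\emptyset$, and by the uniqueness of such a type (connectedness of $T_{fg}$) it must coincide with $p_0$.

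For the converse ($\Rightarrow$), suppose $(a_1, \ldots, a_m)$ realizes $p_0^{(m)}$ in $\F_n$. The direction just proved shows that $(e_1, \ldots, e_m)$ also realizes $p_0^{(m)}$, so both tuples have the same type over $\emptyset$. Homogeneity of $\F_n$ then provides an automorphism $\sigma \in \Aut(\F_n)$ with $\sigma(e_i) = a_i$ for $1 \leq i \leq m$. Since $(e_1, \ldots, e_n)$ is a basis of $\F_n$, its image $(a_1, \ldots, a_m, \sigma(e_{m+1}), \ldots, \sigma(e_n))$ is also a basis, so $(a_1, \ldots, a_m)$ is part of a basis, as required.
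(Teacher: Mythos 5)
First, note that the paper does not actually prove this statement: it is cited as a result of Pillay \cite{PillayGenericity}, so there is no ``paper's own proof'' to compare your proposal against. Your attempt must therefore be evaluated as a proof from scratch, and as such it has two substantive gaps.

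The first is a circularity. To show that $(e_1,\ldots,e_m)$ realizes $p_0^{(m)}$ you invoke Theorem \ref{FreeFactorsIntro} (that is, Theorem \ref{FreeFactors}) with $A=\{1\}$ and the decomposition $\F_n = \langle e_i\rangle * \langle e_j : j\neq i\rangle$ to conclude that the $e_i$ form an independent set over $\emptyset$. But look at how the paper proves exactly that direction $(\Leftarrow)$ of Theorem \ref{FreeFactors}: it says ``This direction is immediate as a basis of $\F_n$ is an independent set over $\emptyset$ by Theorem \ref{GenericInFree}.'' In other words, the independence of a basis over $\emptyset$ is precisely the content being borrowed from Theorem \ref{GenericInFree}, which is what you are trying to prove. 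You cannot use Theorem \ref{FreeFactorsIntro} here without first supplying an independent argument that a basis of $\F_n$ is a $\forkindep_\emptyset$-independent set.

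The second gap is the identification $\tp^{\F_n}(e_1)=p_0$. Your argument is that the set $P$ of primitives is group-theoretically generic, i.e.\ finitely many translates of $P$ cover $\F_n$, which you claim ``can be extracted from Nielsen--Whitehead theory by showing that every element of $\F_n$ is a product of a bounded number of primitives.'' This bounded-primitive-width claim is neither proved nor, as far as I know, a standard consequence of Nielsen--Whitehead theory; it is a non-trivial assertion that you are asserting rather than establishing, and it carries the full weight of the genericity step. Pillay's argument does not proceed by showing $P$ is generic in this combinatorial sense, so even apart from the question of whether the claim is true, you would need to supply an actual proof of it before this route can be considered complete. The $(\Rightarrow)$ direction is fine granted $(\Leftarrow)$ and homogeneity, but $(\Leftarrow)$ as written is not established.
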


In particular, the above theorem states that $tp^{\F_n}(e_1)$ is generic. 

An immediate consequence is that if $\F_n=\F*\F'*\F''$, then $\F \underset{\F'}{\forkindep} \F''$. 
We note that the above theorem has been generalized by the authors to finitely generated models of $T_{fg}$ with the appropriate 
modifications (see \cite{PerinSklinosHomogeneity}). 

The following theorem has been proved by the authors \cite{PerinSklinosHomogeneity} and Ould Houcine \cite{OuldHoucineHomogeneity} independently.

\begin{thm}
$\F_n$ is homogeneous.
\end{thm}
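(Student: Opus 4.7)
We must produce, for any two tuples $\bar{b}, \bar{c} \in \F_n$ with $\tp^{\F_n}(\bar{b}) = \tp^{\F_n}(\bar{c})$, an automorphism $\sigma \in \Aut(\F_n)$ with $\sigma(\bar{b}) = \bar{c}$. The plan is to combine the structural description of $\F_n$ relative to $\bar{b}$ coming from graph of groups decompositions with the shortening argument of Rips and Sela; the identification of $\bar{c}$ with a homomorphic image of $\bar{b}$ comes from the type equality, the final upgrade to an automorphism comes from the shortening.

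The first step is to reduce to the freely indecomposable case. Let $\F_n = H_1 * \cdots * H_k * \F'$ be the Grushko decomposition of $\F_n$ relative to $\bar{b}$, with $\bar{b}$ conjugate into $H_1 * \cdots * H_k$. Forking-theoretic considerations in the spirit of Theorem \ref{FreeFactorsIntro} show that $\tp(\bar{b})$ controls this decomposition, so it suffices to handle each $H_i$ independently and then recombine via an automorphism respecting the free product. One may therefore assume that $\F_n$ is freely indecomposable with respect to $\bar{b}$, and consider the cyclic JSJ decomposition $\Lambda$ of $\F_n$ relative to $\bar{b}$.

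The second step is to construct a formula $\phi_{\bar{b}}(\bar{x}) \in \tp(\bar{b})$ encoding $\Lambda$. Via Sela's strict version of Merzlyakov's theorem (formal solutions) applied to test sequences obtained from the modular group $\Mod(\F_n/\bar{b})$ of $\Lambda$, one produces a formula asserting the existence of witnesses for the edge stabilisers and for the surface-type and rigid vertex groups of $\Lambda$, together with Diophantine inequations preventing the decomposition from collapsing under a homomorphism. Since $\bar{c}$ realises $\tp(\bar{b})$, it satisfies $\phi_{\bar{b}}$, and one extracts a homomorphism $h : \F_n \to \F_n$ sending $\bar{b}$ to $\bar{c}$ and respecting the $\Lambda$-structure.

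The hardest step, and the main obstacle, is to upgrade $h$ to an automorphism. For this I would apply the Rips--Sela shortening argument to the family $\{\, h \circ \alpha : \alpha \in \Mod(\F_n/\bar{b})\, \}$: choosing a representative of minimal word length on a fixed generating set of $\F_n$, and passing to a Bestvina--Paulin limit of suitably rescaled actions, one obtains an action of $\F_n$ on an $\R$-tree. Rips theory on this tree, combined with the maximality of $\Lambda$, forces either that the action is trivial (so that $h \circ \alpha$ is already an isomorphism for some $\alpha$, which conjugated suitably gives the desired $\sigma$) or a geometric configuration contradicting $\phi_{\bar{b}}$. The subtle point is to choose $\phi_{\bar{b}}$ rich enough that every degenerate limit tree violates it; this is where the fine encoding of boundary relations and the ``rigid formal solution'' package of Sela are essential.
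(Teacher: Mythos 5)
The theorem as stated is attributed in the paper to \cite{PerinSklinosHomogeneity} and \cite{OuldHoucineHomogeneity} and is not reproved here; the closest analogue is Theorem \ref{TypesIsolated} in Section \ref{IsolatingSec}, which strengthens homogeneity to atomicity for concrete torsion-free hyperbolic groups, and whose proof is the relevant comparison.

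Your outline gets the broad strategy right --- encode the relative JSJ in a single first-order formula, use the shortening argument to upgrade a homomorphism satisfying it to an automorphism --- but there is one genuinely circular step. You propose to carry out the reduction to the freely indecomposable case via ``forking-theoretic considerations in the spirit of Theorem \ref{FreeFactorsIntro}.'' This is backwards: the forking characterisation over free factors (Theorem \ref{FreeFactors}) is itself proved \emph{using} homogeneity of $\F_{2n-m}$, so you cannot invoke forking to reduce homogeneity to the indecomposable case. The reduction in the actual proofs is purely algebraic (via the structure of Grushko/Sela decompositions and tower arguments) and has nothing to do with forking.

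A secondary point of divergence: the paper's proof of Theorem \ref{TypesIsolated} does not invoke Merzlyakov's theorem or formal solutions. Instead, it packages the shortening argument once and for all as a finite set of ``shortening quotients'' $\{\eta_j: G \to Q_j\}$ through which every non-injective endomorphism fixing $A_0$ factors (after precomposition by a modular automorphism), and then writes a single formula $\phi(\bar{z}, A_0)$ asserting the existence of a homomorphism hitting $\bar{z}$ and fixing $A_0$, such that no $\Lambda$-related homomorphism kills a chosen nontrivial $u_j \in \Ker(\eta_j)$. Satisfaction of $\phi$ by $\bar{c}$ forces injectivity directly, and the relative co-Hopf property then yields an automorphism; a Bestvina--Paulin limit / $\R$-tree analysis is already built into the factoring theorem and does not need to be re-run. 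Your suggestion to pick $\phi_{\bar{b}}$ ``rich enough that every degenerate limit tree violates it'' is exactly the role played by the $\Lambda$-related condition and the $u_j$'s, so you have identified the right obstruction but proposed a heavier and less precise mechanism for circumventing it. Fix the circular first step and lean on the shortening-quotient formulation, and the proposal is essentially sound.
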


As a matter of fact we will see in Section \ref{IsolatingSec} that the proof of this result can be adapted to give the following 

\begin{thm}
Let $G$ be a torsion-free hyperbolic group concrete with respect to a subgroup $A$. Then $G$ is atomic over $A$.
\end{thm}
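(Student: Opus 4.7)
The plan is to adapt the homogeneity proof of \cite{PerinSklinosHomogeneity} (essentially an application of Sela's shortening method) so as to build, for each finite tuple $\bar{b} \in G$, an explicit isolating formula for $\tp^{G}(\bar{b}/A)$. Since the theory $\mathcal{T}h(G,a)_{a\in A}$ is complete, in order to isolate $\tp^{G}(\bar{b}/A)$ it suffices to exhibit a formula $\phi_{\bar{b}}(\bar{x})$ over $A$ with $G \models \phi_{\bar{b}}(\bar{b})$ and such that the solution set $\phi_{\bar{b}}(G)$ coincides with the $\Aut(G/A)$-orbit of $\bar{b}$: once this holds, every $\psi \in \tp^{G}(\bar{b}/A)$ is $G$-implied by $\phi_{\bar{b}}$, and completeness transfers this implication to every elementarily equivalent model.

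First I would record the Diophantine part. Pick a finite generating set of $H := \langle A, \bar{b}\rangle$ over $A$ together with finitely many defining relators of $H$ as a subgroup of $G$, producing a finite system of equations $\Sigma_{\bar{b}}(\bar{x})$ over $A$ such that any solution $\bar{c} \in G$ induces a homomorphism $\pi_{\bar{c}}: H \to G$ fixing $A$ pointwise and sending $\bar{b}$ to $\bar{c}$. By itself $\Sigma_{\bar{b}}$ has too many solutions, because precomposing the inclusion $H\hookrightarrow G$ with any relative modular automorphism $\sigma \in \Mod_{A}(H)$ yields another. To cut down to a single orbit I would augment $\Sigma_{\bar{b}}$ by finitely many inequations obtained from Sela's shortening / formal-solutions machinery: since $G$ is freely indecomposable with respect to $A$, the cyclic JSJ of $G$ relative to $A$ exists and controls $\Mod_{A}(G)$, and a standard test-sequence construction yields a first-order selection of $\Mod_{A}(G)$-shortest representatives in each orbit. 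The resulting augmented formula $\phi_{\bar{b}}(\bar{x})$ is then the candidate isolating formula.

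Next I would show that any short solution $\bar{c}$ of $\phi_{\bar{b}}$ lies in $\Aut(G/A) \cdot \bar{b}$, by arguing that $\pi_{\bar{c}}$ must extend to an automorphism of $G$ fixing $A$. Suppose toward a contradiction that it does not. Then one could extend $\pi_{\bar{c}}$ (first to all of $G$ via standard extension arguments on short morphisms) to a non-injective self-morphism of $G$ fixing $A$, feed a well-chosen sequence of pre-compositions by modular automorphisms into the Bestvina--Paulin limiting procedure, and produce a non-trivial isometric action of $G$ on a real tree in which $A$ is elliptic. Rips theory, combined with the refinement arguments of \cite{LouderPerinSklinosTowers}, would then convert this splitting into an extended hyperbolic floor structure of $G$ over $A$, contradicting concreteness.

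The main obstacle is the first-order encoding of "$\Mod_{A}(G)$-shortness" in the second step. This is where the bulk of Sela's technology must be invoked: formal solutions, shortening quotients of limit groups relative to $A$, and definability of the relative Makanin--Razborov diagram in the torsion-free hyperbolic setting. Once shortness is shown to be Diophantine, the passage from short morphisms to automorphisms in the third step is a by-now familiar application of concreteness, essentially the same step that powers the homogeneity theorem stated just above.
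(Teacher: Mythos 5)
Your high-level strategy --- construct an explicit isolating formula for $\tp^{G}(\bar{b}/A)$ by combining a Diophantine clause with finitely many inequations, then use a shortening/Rips-theory contradiction to show that its solutions lie in the $\Aut(G/A)$-orbit of $\bar{b}$ --- is the same shape as the paper's proof (Theorem \ref{TypesIsolated}). But there is a genuine gap in how you set up the Diophantine part. You present $\Sigma_{\bar{b}}$ in terms of a presentation of $H = \langle A, \bar{b}\rangle$, so a solution $\bar{c}$ only yields a homomorphism $\pi_{\bar{c}}\colon H \to G$, and you then invoke ``standard extension arguments on short morphisms'' to extend $\pi_{\bar{c}}$ to all of $G$. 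No such extension result is available: a homomorphism from a subgroup of a hyperbolic group has no reason to extend to an endomorphism of the ambient group. This matters because the contradiction in your third step requires a $G$-action on a real tree, so that Rips theory yields a splitting --- and ultimately an extended hyperbolic floor --- of $G$ over $A$; a sequence of morphisms $H \to G$ only gives an $H$-action, and an $H$-splitting is irrelevant to whether $G$ is concrete over $A$. There is also a subsidiary problem: $H$ need not be finitely presented, so the ``finitely many defining relators of $H$'' you want to record need not exist.

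The paper avoids both issues by writing the Diophantine clause in terms of a finite generating set and a finite presentation of $G$ itself (hyperbolic groups are finitely presented), so that a solution directly yields an endomorphism $h\colon G \to G$ fixing a well-chosen finite subset $A_0$ of $A$ and sending $\bar{b}$ to $\bar{c}$. It also replaces the unspecified ``first-order selection of short representatives'' by a concrete device: the finitely many quotients $\eta_j\colon G \to Q_j$ of Theorem 4.4 in \cite{PerinSklinosHomogeneity}, through which every non-injective $A_0$-endomorphism factors after modular precomposition. Picking non-trivial $u_j \in \Ker(\eta_j)$, a universally quantified clause involving the definable relation of being $\Lambda$-related asserts that no endomorphism $\Lambda$-related to $h$ kills any $u_j$; this forces $h$ to be injective, and the relative co-Hopf property makes $h$ an automorphism. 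Concreteness enters only when verifying that $\bar{b}$ itself satisfies the formula, via the preretraction argument of \cite{PerinElementary}. If you reformulate your Diophantine part over $G$ rather than $H$, and replace the shortness condition by the explicit factoring-through-$\eta_j$ criterion, your outline becomes the paper's proof.
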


\section{Forking over free factors} \label{FreeFactorsSec}
In this section we describe forking independence in non abelian free groups over (possibly trivial) free factors. 
We begin with a result of more general interest.

\begin{thm}\label{stat}
Let $A\subset \F_n$. Then every type in $S_m(A)$ is stationary if and only if $tp^{\F_n}(e_1,\ldots,$ $e_n/A)$ is stationary.
\end{thm}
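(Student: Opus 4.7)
The plan is to reduce the statement to the algebraic characterisation of stationarity given by Lemma \ref{Stationarity}: every type in $S_m(A)$ is stationary if and only if $acl^{eq}(A) = dcl^{eq}(A)$. One direction is immediate --- if every $m$-type over $A$ is stationary then in particular $tp^{\F_n}(e_1,\ldots,e_n/A)$ is --- so the content lies in the converse: assuming $tp^{\F_n}(\bar{e}/A)$ is stationary, I want to conclude that $acl^{eq}(A) = dcl^{eq}(A)$.

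The strategy I would pursue is a contradiction argument exploiting the fact that $\F_n$ is both an elementary submodel of the monster $\mathbb{M}$ and equal to $dcl(\bar{e})$. Suppose some $e \in acl^{eq}(A) \setminus dcl^{eq}(A)$ exists, so that $\sigma(e) \neq e$ for some $\sigma \in \Aut(\mathbb{M}^{eq}/A)$. Then $\sigma(\bar{e})$ realises $tp^{\F_n}(\bar{e}/A)$, and since both $\bar{e}$ and $\sigma(\bar{e})$ realise non-forking extensions of this type over $acl^{eq}(A)$ (by Lemma \ref{ForkAlg}, combined with the standard fact that algebraic-closure extensions never fork), stationarity forces $tp(\bar{e}/acl^{eq}(A)) = tp(\sigma(\bar{e})/acl^{eq}(A))$. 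Saturation of $\mathbb{M}^{eq}$ then provides $\tau \in \Aut(\mathbb{M}^{eq}/acl^{eq}(A))$ with $\tau(\bar{e}) = \sigma(\bar{e})$, so that $\tau^{-1}\sigma$ fixes $\bar{e}$, and hence all of $\F_n = dcl(\bar{e})$, pointwise.

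To close the argument I would invoke boundedness (Fact \ref{forkprop}(v)) applied to the model $\F_n$: every type over $\F_n$ is stationary, so by Lemma \ref{Stationarity} again, $acl^{eq}(\F_n) = dcl^{eq}(\F_n)$. Since $e \in acl^{eq}(A) \subseteq acl^{eq}(\F_n) = dcl^{eq}(\F_n)$, the automorphism $\tau^{-1}\sigma$, fixing $\F_n$ pointwise, also fixes $e$. Combined with $\tau(e) = e$ (as $\tau$ fixes $acl^{eq}(A) \ni e$), this yields $\sigma(e) = e$, contradicting the choice of $\sigma$.

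The conceptual heart of the argument --- and where I expect the only nontrivial step --- is the decomposition $\sigma = \tau \circ (\tau^{-1}\sigma)$ into a factor fixing $acl^{eq}(A)$ and a factor fixing $\F_n$ pointwise. This is what allows the hypothesis on the single type $tp^{\F_n}(\bar{e}/A)$ to propagate to stationarity of all types, routed through the ``stationarity over models'' supplied freely by boundedness. The remaining verifications (extending $\sigma$ to $\mathbb{M}^{eq}$, the saturation argument for $\tau$, and checking that fixing $\bar{e}$ implies fixing all of $\F_n$ since every element of $\F_n$ is a word in the basis) are routine.
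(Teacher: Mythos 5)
Your proposal is correct, and it lands on the same two pillars as the paper's proof: Lemma \ref{Stationarity} (stationarity of all types over $A$ is equivalent to $acl^{eq}(A)=dcl^{eq}(A)$) and boundedness (Fact \ref{forkprop}(v)), applied to the model $\F_n$ to deduce $acl^{eq}(\F_n)=dcl^{eq}(\F_n)=dcl^{eq}(\bar e)$. Where you diverge is in the middle step. The paper argues directly: given $a\in acl^{eq}(A)$, it first places $a$ in $dcl^{eq}(\bar e)$ via the boundedness observation, then invokes (implicitly) the standard fact that stationarity of $tp(\bar e/A)$ passes down to $tp(a/A)$ for any $a\in dcl^{eq}(\bar e)$, and finally concludes $a\in dcl^{eq}(A)$ because a stationary type of an algebraic element has only one conjugate. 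You instead run a contradiction argument via automorphisms: you take $\sigma$ moving $a$, use stationarity plus strong homogeneity of $\mathbb{M}^{eq}$ to factor $\sigma=\tau\circ(\tau^{-1}\sigma)$ with $\tau$ fixing $acl^{eq}(A)$ and $\tau^{-1}\sigma$ fixing $\F_n$ pointwise, and then check each factor fixes $a$. Your decomposition is essentially an unwound proof of the transfer fact the paper uses as a black box, so the two arguments are morally the same; the paper's is more compressed, yours makes the mechanism visible and avoids needing to name the transfer lemma explicitly. One small remark: at the point where you conclude that $\tau^{-1}\sigma$ fixes $e$, you are tacitly using that an automorphism fixing $\F_n$ pointwise fixes all of $dcl^{eq}(\F_n)$ pointwise, which is of course automatic but worth saying once since $e$ lives in $\mathbb{M}^{eq}$ rather than in $\F_n$ itself.
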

\begin{proof}
For the non trivial direction it is enough, by Lemma \ref{Stationarity}, to prove that $acl^{eq}(A)=dcl^{eq}(A)$. 
Let $a\in acl^{eq}(A)$. Then $a\in dcl^{eq}(e_1,\ldots,e_n)$, and since $tp^{\F_n}(e_1,\ldots,e_n/A)$ is stationary, 
we have that $tp^{\F_n^{eq}}(a/A)$ is stationary. Thus, $a\in dcl^{eq}(A)$ as we wanted. 
\end{proof}

We get the following corollaries.

\begin{cor}\label{emptyset}
Let $p(\bar{x})\in S_m(T_{fg})$. Then $p(\bar{x})$ is stationary.
\end{cor}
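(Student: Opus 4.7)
The plan is to apply Theorem~\ref{stat} with parameter set $A = \emptyset$, which immediately reduces the corollary to showing that $\tp^{\F_n}(e_1, \ldots, e_n)$ is stationary over the empty set. The remaining work is then to assemble the results about generic types that were collected at the end of the previous subsection.

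First I would invoke Theorem~\ref{GenericInFree} with $m = n$: since $(e_1, \ldots, e_n)$ is a basis of $\F_n$, the tuple realizes $p_0^{(n)}$, so $\tp^{\F_n}(e_1, \ldots, e_n) = p_0^{(n)}$. Next, connectedness of $T_{fg}$ combined with Corollary~\ref{gensta} gives that the generic type $p_0$ over $\emptyset$ is stationary. The observation from the stability subsection that $p$ stationary forces $p^{(\kappa)}$ stationary then yields that $p_0^{(n)}$, hence $\tp^{\F_n}(e_1, \ldots, e_n)$, is stationary over $\emptyset$. Plugging this into Theorem~\ref{stat} produces the desired stationarity of every type in $S_m(T_{fg}) = S_m(\emptyset)$.

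There is essentially no obstacle here: the corollary is a direct bookkeeping exercise chaining together Theorem~\ref{stat}, Pillay's identification of generic tuples with partial bases, connectedness of $T_{fg}$, and the elementary passage from stationarity of $p$ to stationarity of $p^{(\kappa)}$. The only point requiring mild attention is that the stationarity of the generic type is used over the empty set, which is legitimate precisely because $T_{fg}$ (and not just a particular model) is connected.
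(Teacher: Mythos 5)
Your argument is correct and follows essentially the same route as the paper: establish that the generic type $p_0$ is stationary via connectedness (Corollary \ref{gensta}), pass to $p_0^{(\kappa)}$, identify it with the type of a basis using Theorem \ref{GenericInFree}, and feed that into Theorem \ref{stat} with $A=\emptyset$. The only cosmetic difference is that the paper works with $\F_2$ and $p_0^{(2)}=\tp^{\F_2}(e_1,e_2)$ while you work with $\F_n$ and $p_0^{(n)}$; both choices are equally valid since Theorem \ref{stat} applies to any $\F_n$.
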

\begin{proof}
Since $p_0(x)$ is stationary, it follows that $p_0^{(2)}(x,y):=tp^{\F_2}(e_1,e_2)$ is. Now use 
Theorem \ref{stat} for $A=\emptyset$.  
\end{proof}

\begin{cor}\label{Z}
Suppose $a$ realizes $p_0$ in some model of $T_{fg}$ and $p\in S_m(a)$. 
Then $p$ is stationary.
\end{cor}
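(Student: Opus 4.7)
The plan is to reduce to a concrete realization of $p_0$ and then invoke Theorem \ref{stat}. Observe first that whether every type in $S_m(a)$ is stationary depends only on $\tp(a)=p_0$: any automorphism of the monster model sending one realization of $p_0$ to another induces a bijection on types over these realizations which preserves stationarity. We may therefore take $a=e_1\in\F_2$, which realizes $p_0$ by Theorem \ref{GenericInFree}.

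By Theorem \ref{stat} applied with $A=\{e_1\}\subset\F_2$, it is enough to show that $\tp^{\F_2}(e_1,e_2/\{e_1\})$ is stationary. This type contains the formula $x_1=e_1$, so its non-forking extensions to any $B\supseteq\{e_1\}$ correspond bijectively (by adjoining the clause $x_1=e_1$) to those of $\tp^{\F_2}(e_2/\{e_1\})$, and the two types are stationary together. Hence the task reduces to showing that $\tp^{\F_2}(e_2/\{e_1\})$ is stationary.

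For this, $\tp^{\F_2}(e_2)=p_0$ by Theorem \ref{GenericInFree}, and the free decomposition $\F_2=\langle e_2\rangle * \langle e_1\rangle$ together with the consequence of Theorem \ref{GenericInFree} noted just after its statement yields $e_2\underset{\emptyset}{\forkindep}e_1$ (passing from subgroups to elements by monotonicity of forking). Thus $\tp^{\F_2}(e_2/\{e_1\})$ is a non-forking extension of the stationary type $p_0$ (stationary by Corollary \ref{gensta}). A standard consequence of the transitivity of forking is that the non-forking extension of a stationary type is itself stationary, which gives the conclusion.

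The main (minor) subtlety is the reduction from the abstract realization $a$ in the statement to the concrete $e_1\in\F_2$ needed to apply Theorem \ref{stat}; this is handled by the automorphism-invariance of stationarity noted at the outset. No new geometric input is required beyond what is already proved.
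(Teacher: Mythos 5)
Your proof is correct and follows essentially the same route as the paper: both reduce via Theorem~\ref{stat} (with $A$ a singleton realization of $p_0$ sitting inside a copy of $\F_2$ spanned by $a$ and a realization of the non-forking extension of $p_0$ over $a$) to the stationarity of a non-forking extension of the stationary type $p_0$. You additionally make explicit the passage to the concrete basis $e_1,e_2$ via automorphism-invariance, and the reduction from $\tp^{\F_2}(e_1,e_2/e_1)$ to $\tp^{\F_2}(e_2/e_1)$, both of which the paper leaves implicit; the underlying argument is the same.
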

\begin{proof}
Suppose $b$ realizes the unique non-forking extension of $p_0$ over $a$. Then $\langle a,b\rangle\cong \F_2$, and $tp^{\langle a,b\rangle}(a,b/a)$ is 
stationary. Now use Theorem \ref{stat} for $A=\{a\}$.
\end{proof}

We are now ready to describe forking independence over free factors. 
For $m<n<\omega$, we will denote the free group of rank $n-m$ generated by $e_{m+1}\ldots,e_n$ by $\F_{m,n}$.

\begin{thm} \label{FreeFactors}
Let $\bar{a},\bar{b}\in \F_n$ and let $A$ be a free factor of $\F_n$. Then 
$\bar{a} \underset{A}{\forkindep} \bar{b}$ if and only if $\F_n$ admits a free decomposition $\F_n=\F*A*\F'$ with 
$\bar{a}\in \F*A$ and $\bar{b}\in A*\F'$.
\end{thm}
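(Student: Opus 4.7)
For the direction $(\Leftarrow)$, concatenating bases of $\F$, $A$, and $\F'$ yields a basis of $\F_n$ which by Theorem \ref{GenericInFree} realizes $p_0^{(n)}$; the remark following that theorem that a free decomposition $\F_n=\F_1*\F_2*\F_3$ yields $\F_1\underset{\F_2}{\forkindep}\F_3$, together with monotonicity of forking (preservation of independence under passing to sub-tuples on either side), immediately gives $\bar{a}\underset{A}{\forkindep}\bar{b}$.

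For the direction $(\Rightarrow)$, the plan is to construct a candidate decomposition from $\bar{b}$, realize $\tp(\bar{a}/A)$ inside the complementary free factor using elementarity, and then transport via stationarity and homogeneity. Concretely, after disposing of the immediate sub-cases $\bar{a}\in A$ or $\bar{b}\in A$ (take the corresponding $\F$ or $\F'$ trivial, and the other a free complement of $A$ in $\F_n$), I would take $H$ to be a free factor hull of $\langle A,\bar{b}\rangle$ in $\F_n$, chosen so that $A$ is itself a free factor of $H$ (possible since $A$ is a free factor of $\F_n$), write $H=A*K$ and $\F_n=F*A*K$, and—provided $F\ne\{1\}$ and $F*A$ is non-abelian—use that $F*A\prec\F_n$ (Sela/Kharlampovich-Myasnikov) to realize $p:=\tp^{\F_n}(\bar{a}/A)$ by some $\bar{a}'\in F*A$. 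The $(\Leftarrow)$ direction applied to $\F_n=F*A*K$ then gives $\bar{a}'\underset{A}{\forkindep}\bar{b}$. Stationarity of $p$ I would establish via Theorem \ref{stat}, reducing the stationarity of $\tp(e_1,\ldots,e_n/A)$ (in a basis adapted to the free factor $A$) to that of a non-forking extension of a power of $p_0$, which is stationary. Stationarity of $p$ then yields $\tp(\bar{a}'/A\bar{b})=\tp(\bar{a}/A\bar{b})$, and homogeneity of $\F_n$ applied to the tuples $(\bar{a},\bar{b},a_1,\ldots,a_k)$ and $(\bar{a}',\bar{b},a_1,\ldots,a_k)$, where $a_1,\ldots,a_k$ enumerate a basis of $A$, furnishes an automorphism $\sigma$ of $\F_n$ fixing $A$ pointwise and fixing $\bar{b}$, with $\sigma(\bar{a}')=\bar{a}$. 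Pulling the decomposition $\F_n=F*A*K$ back through $\sigma$ produces the desired decomposition.

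The main obstacle is the degenerate case $F=\{1\}$, i.e.\ $\F_n$ is freely indecomposable with respect to $\langle A,\bar{b}\rangle$, together with the closely related subcase where $F*A$ is cyclic and hence not an elementary submodel of $\F_n$. In these rigid situations, the only admissible decomposition of the form $\F_n=\F*A*\F'$ with $\bar{b}\in A*\F'$ forces $\F=\{1\}$, hence $\bar{a}\in A$; so one must show that $\bar{a}\underset{A}{\forkindep}\bar{b}$ coupled with $\bar{a}\notin A$ leads to a contradiction. My approach would be first to run the symmetric construction using the free factor hull of $\langle A,\bar{a}\rangle$, which by symmetry of forking handles everything except the doubly rigid case where both hulls equal $\F_n$; in that truly rigid case, I would aim to exhibit a forking formula $\phi(\bar{x},\bar{b})$ over $A\bar{b}$ satisfied by $\bar{a}$ via Lemma \ref{AtoFork}, verifying that $\phi(\F_n,\bar{b})$ contains no almost $A$-invariant subset by exploiting that $\Aut(\F_n/A,\bar{b})$ acts rigidly when $\F_n$ is freely indecomposable with respect to $\langle A,\bar{b}\rangle$.
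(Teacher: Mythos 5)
Your $(\Leftarrow)$ direction matches the paper's. Your $(\Rightarrow)$ direction takes a genuinely different route, and it has two gaps that the paper's approach is specifically designed to avoid.

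\emph{The realization step is unjustified.} You claim that $F*A \prec \F_n$ lets you realize $p = \tp^{\F_n}(\bar{a}/A)$ inside $F*A$. Elementarity of a proper submodel does not yield realizations of arbitrary types: $\F_m \prec \F_n$ for $2\le m<n$, yet $p_0^{(n)} = \tp^{\F_n}(e_1,\ldots,e_n)$ is not realized in $\F_m$ by Theorem~\ref{GenericInFree}. What would suffice is that $p$ is isolated (so $\exists\bar{x}\,\phi$ transfers down), i.e.\ that $\F_n$ is atomic over $A$; but $\F_n$ is not atomic over a proper free factor $A$ (already over $A=\{1\}$ the generic type is not isolated), so Theorem~\ref{TypesIsolated} is of no use here. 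The fact that $p$ \emph{is} realized in your $F*A$ is true, but proving it seems to require roughly the content of the theorem itself: one would compare the size of $F*A$ with the hypothetical $\F*A$ from the sought decomposition and find an elementary embedding over $A$, which is circular. The paper sidesteps all of this by enlarging the ambient group to $\F_{2n-m}$, where the realization $\bar{a}'$ is constructed \emph{explicitly} by substituting $n-m$ fresh basis elements for $e_{m+1},\ldots,e_n$ in a word representing $\bar{a}$. No appeal to elementarity or isolation is needed; one then invokes stationarity, homogeneity of $\F_{2n-m}$, and finally the Kurosh subgroup theorem to descend the decomposition back to $\F_n \le \F_{2n-m}$.

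\emph{The rigid cases are not handled.} You correctly identify that when the free factor hull of $\langle A,\bar{b}\rangle$ is all of $\F_n$ (and, by symmetry of forking, likewise for $\langle A,\bar{a}\rangle$), your construction collapses, and here the theorem asserts \emph{forking} unless $\bar{a}\in A$. Your plan to derive a contradiction via Lemma~\ref{AtoFork} is going in the wrong direction: that lemma only shows a formula does \emph{not} fork when its solution set contains an almost $A$-invariant subset; it gives no criterion for establishing forking, and indeed the forking direction is the hard one in this paper (compare the curve-complex machinery needed for Theorem~\ref{FreelyIndec}). The paper's enlargement trick makes this case disappear entirely: in $\F_{2n-m}$ there is always room for an independent copy of the ``non-$A$'' part of a basis, regardless of how $\bar{b}$ sits in $\F_n$, so the argument is uniform with no rigid subcase to patch.

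Your stationarity argument is sound and essentially the same reduction the paper performs (trivial $A$, cyclic $A$, and non-abelian $A$ via boundedness over a model).
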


\begin{proof}
$(\Leftarrow)$ This direction is immediate as a basis of $\F_n$ is an independent set over $\emptyset$ by Theorem \ref{GenericInFree}. \\
$(\Rightarrow)$ We may assume that $A=\F_m$ for some $m<n$ (we also include the case where $\F_m$ is trivial). 
Let $\bar{a}(x_1,\ldots,x_n)$ be a tuple of words 
in variables $x_1,\ldots,x_n$, such that $\bar{a}(e_1,\ldots,e_n)=\bar{a}$. We consider the tuple 
$\bar{a}':=\bar{a}(e_1,\ldots,e_m,e_{n+1},\ldots,e_{2n-m})$ in $\F_{2n-m}$. As $e_{n+1},\ldots,$ $e_{2n-m}$ is independent from 
$e_1,\ldots,e_{n}$ over $e_1,\ldots,e_m$, we have that $\bar{a}'$ is independent from $\F_m\bar{b}$ over $\F_m$. 
We also note that $p:=tp^{\F_{2n-m}}(\bar{a}/\F_m)=tp^{\F_{2n-m}}(\bar{a}'/\F_m)$ as there is an automorphism of $\F_{2n-m}$ 
fixing $\F_m$ taking 
$\bar{a}$ to $\bar{a}'$. But $p$ is stationary (for $\F_m$ trivial follows from Corollary \ref{emptyset}, for $\F_m\cong\mathbb{Z}$ 
follows from Corollary \ref{Z}, and in any other case $\F_m$ is a model so this follows from Fact \ref{forkprop}(v)), 
thus $tp^{\F_{2n-m}}(\bar{a}/\F_m\bar{b})=tp^{\F_{2n-m}}(\bar{a}'/\F_m\bar{b})$. By homogeneity of $\F_{2n-m}$ there is 
an automorphism $f\in Aut(\F_{2n-m}/\bar{b})$ which sends $\bar{a}'$ to $\bar{a}$. We consider the following decomposition 
$\F_{2n-m}=\F_m*\F_{m+1,n}*\F_{n+1,2n-m}$. We now apply $f$ to this decomposition and we get 
$\F_{2n-m}=\F_m*f(\F_{m+1,n})*f(\F_{n+1,2n-m})$ with $\bar{b}\in \F_m*f(\F_{m+1,n})$ and $\bar{a}\in \F_m*f(\F_{n+1,2n-m})$.\\
But $\F_n$ is a subgroup of $\F_{2n-m}$, thus by Kurosh subgroup theorem we get a decomposition of $\F_n$ as 
we wanted.  
\end{proof}

\section{JSJ decompositions and modular groups} \label{JSJSec}
The main theme of this section is the description of the modular group $\Mod_A(G)$ of 
automorphisms of a torsion-free hyperbolic group $G$ which is freely indecomposable with respect to a subgroup $A$. We briefly explain the outline of the section and the tools used. 

In the first subsection we are concerned with cyclic JSJ splittings of $G$ relative to $A$. These are splittings of a group $G$ which in some sense encode all the splittings of $G$ over cyclic subgroups in which $A$ is elliptic. 

The next three subsections are devoted to ``elementary'' automorphisms associated to a splitting of a group. These are automorphisms that 
can be read locally from the splitting, namely Dehn twists and vertex automorphisms. 
Under certain conditions we prove that ``elementary'' 
automorphisms almost commute. 

In the final subsection we prove that one can read the modular group of $G$
from its JSJ splitting relative to $A$, and we moreover give a normal form theorem for the modular automorphisms. These results are not new (see \cite{LevittAMOfHypGroups} and \cite{GuirardelLevittSplittingsAndAM}), but the hands-on proofs we give in this specific setting hopefully helps to gain low-level intuition. 
We also describe the normal form of automorphisms which fix pointwise a subgroup of $G$ containing $A$.

\subsection{G-trees and JSJ decompositions}

We will use definitions and results about graphs of groups from \cite{SerreTrees}. 
Let $\Gamma$ be a graph: we will denote by $V(\Gamma)$ and $E(\Gamma)$ respectively its vertex and edge sets. 
The set $E(\Gamma)$ is always assumed to be stable under the involution which to an edge $e$ associates its inverse edge $\bar{e}$. 

Let $G$ be a finitely generated group. A {\em$G$-tree} is a simplicial tree $T$ 
endowed with an action of $G$ without inversions of edges. 
We say $T$ is {\em minimal} if it admits no proper $G$-invariant subtree.  
A {\em cyclic $G$-tree} is a $G$-tree whose edge stabilizers are infinite cyclic. 
If $A$ is a subset of $G$, a {\em $(G,A)$-tree} is a $G$-tree in which $A$ fixes a point. 
Following \cite{GuirardelLevittDefSpaces}, we call a (not necessarily simplicial) surjective equivariant 
map $d: T_1 \to T_2$ between two $(G,A)$-trees a {\em domination map}. 
A surjective simplicial map $p:T_1 \to T_2$ which consists in collapsing some orbits of edges to points 
is called a {\em collapse map}. In this case, we also say that $T_1$ {\em refines} $T_2$.

We also define:

\begin{defi} \emph{(Bass-Serre presentation)} Let $G$ be a finitely generated group, and let $T$ be a $G$-tree. Denote by $\Lambda$ the 
corresponding quotient graph of groups and by $p$ the quotient map $T \to \Lambda$. 

A Bass-Serre presentation for $\Lambda$ is a triple $(T^1, T^0, (t_e)_{e  \in E_1)})$ consisting of
\begin{itemize}
\item a subtree $T^1$ of $T$ which contains exactly one edge of $p^{-1}(e)$ for each edge $e$ of $\Lambda$;
\item a subtree $T^0$ of  $T^1$ which contains exactly one vertex of $p^{-1}(v)$ for each vertex $v$ of $\Lambda$;
\item for each edge $e \in E_1:= \{ e=uv \mid u \in T^0, v \in T^1\setminus T^0 \}$, an element $t_e$ of $G$ such that $t_e^{-1} \cdot v$ 
lies in $T^0$.
\end{itemize}
We call $t_e$ the stable letter associated to $e$. 
\end{defi}
One can give an explicit presentation of the group $G$ whose generating set is the union of the stabilizers of vertices of 
$T^0$ together with the stable letters $t_e$, hence the name.

For JSJ decompositions, we will use the framework described in \cite{GuirardelLevittJSJI} and \cite{GuirardelLevittJSJII} 
(see also the brief summary given in Section 3 of \cite{PerinSklinosHomogeneity}). 
We recall here the main definitions and results we will use. Unless mentioned otherwise, all $G$-trees are assumed to be minimal.


\paragraph{Deformation space.} The {\em deformation space} of a cyclic $(G,A)$-tree $T$ is the set of all cyclic $(G,A)$-trees $T'$ such that $T$ dominates 
$T'$ and $T'$ dominates $T$. A cyclic $(G,A)$-tree is {\em universally elliptic} if its edge stabilizers are elliptic 
in every cyclic $(G,A)$-tree. If $T$ is a universally elliptic cyclic $(G,A)$-tree, and $T'$ is any cyclic $(G,A)$-tree, 
it is easy to see that there is a tree $\hat{T}$ which refines $T$ and dominates $T'$ (see \cite[Lemma 3.2]{GuirardelLevittJSJI}).

\paragraph{JSJ trees.} A cyclic relative {\em JSJ tree} for $G$ with respect to $A$ is a universally elliptic cyclic $(G,A)$-tree which dominates 
any other universally elliptic cyclic $(G,A)$-tree. All these JSJ trees belong to a same deformation space, 
that we denote ${\cal D}_{JSJ}$. Guirardel and Levitt show that if $G$ is finitely presented and $A$ is finitely generated, 
the JSJ deformation space always exists (see \cite[Theorem 5.1]{GuirardelLevittJSJI}). It is easily seen to be unique.

\paragraph{Rigid and flexible vertices.} A vertex stabilizer in a (relative) JSJ tree is said to be {\em rigid} 
if it is elliptic in any cyclic $(G,A)$-tree , and {\em flexible} if not. In the case of a torsion-free hyperbolic group $G$ and a finitely 
generated subgroup $A$ of $G$ with respect to which $G$ is freely indecomposable, 
the flexible vertices of a cyclic JSJ tree of $G$ with respect to $A$ are 
{\em surface type} vertices \cite[Theorem 8.20]{GuirardelLevittJSJI}, i.e. their stabilizers are fundamental groups of hyperbolic surfaces with boundary, 
any adjacent edge group is contained in a maximal boundary subgroup, and any maximal boudary 
subgroup contains either exactly one adjacent edge group, or exactly one conjugate of $A$ \cite[Remark 8.19]{GuirardelLevittJSJI}. 
Note that (since the vertices are not rigid) these surfaces cannot be thrice punctured spheres \cite[Remark 8.19]{GuirardelLevittJSJI}. Also, they cannot be once punctured Klein bottle or twice punctured projective planes. Indeed, otherwise the JSJ tree $T$ can be refined to a tree $\hat{T}$ by the splitting of this surface corresponding to one or two curves bounding M\"obius bands. This new $(G,A)$-tree is still universally elliptic, since there are no incompatible splittings of the surface, but $T$ does not dominate $\hat{T}$: we get a contradiction.

We give a simple example of a JSJ decomposition at the level of graph of groups. 

\begin{figure*}[ht!]\label{Fig1}

\centering
\includegraphics[width=.5\textwidth]{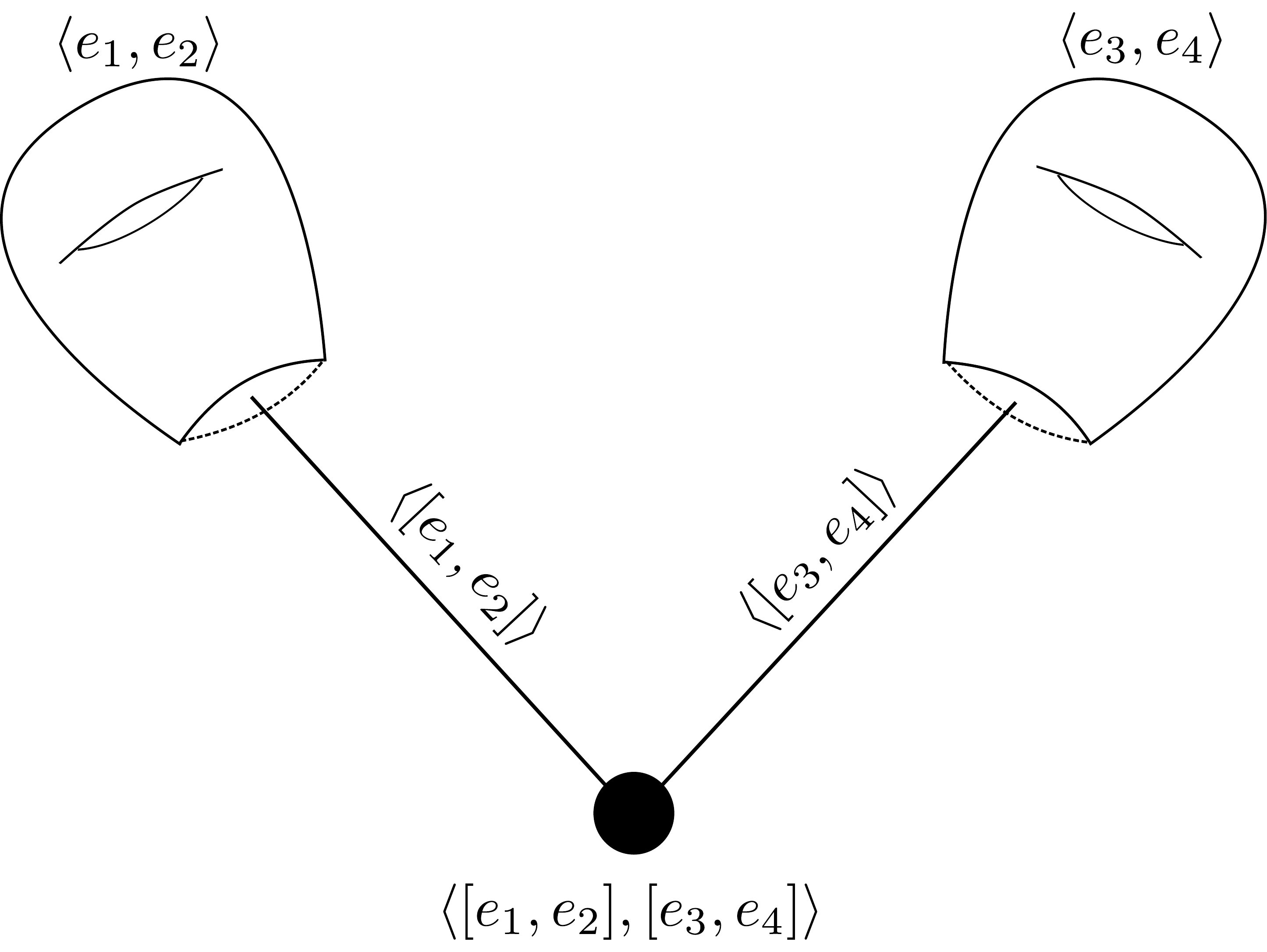}
\caption{A JSJ decomposition of the free group $\F_4$ on $e_1, \ldots, e_4$ relative to $A = \langle [e_1,e_2], [e_3,e_4] \rangle$.}

\end{figure*}


%

\paragraph{The tree of cylinders.} In \cite{GuirardelLevittTreeOfCylinders}, {\em cylinders} in cyclic $G$-trees are defined as equivalence 
classes of edges under the equivalence relation given by commensurability of stabilizers, 
and to any $G$-tree $T$ is associated its {\em tree of cylinders}. 
It can be obtained from $T$ as follows: the vertex set is the union 
$V_0(T_c) \cup V_1(T_c)$ where $V_0(T_c)$ contains a 
vertex $w'$ for each vertex $w$ of $T$ contained in at least two distinct cylinders, and $V_1(T_c)$ contains a vertex 
$v_c$ for each cylinder $c$ of $T$. There is an edge between vertices $w'$ and $v_c$ lying in $V_0(T_c)$ and $V_1(T_c)$ respectively 
if and only if $w$ belongs to the cylinder $c$.  

We get a tree which is bipartite: every edge in the tree of cylinders joins a vertex from $V_0(T_c)$ (which is cyclically stabilized) 
to a vertex of $V_1(T_c)$. Since the action of $G$ on $T$ sends cylinders to cylinders, the tree of cylinder admits an obvious $G$ action. 
Note also that if $H$ stabilizes an edge $e$ of $T$, its centralizer $C(H)$ preserves the cylinder containing $e$ since 
the translates of $e$ are also stabilized by $H$: in particular there is a vertex in $T_c$ whose stabilizer is $C(H)$. 
It is moreover easy to see that this vertex is unique.

It turns out that the tree of cylinders is in fact an invariant of the deformation space 
\cite[Corollary 4.10]{GuirardelLevittTreeOfCylinders}.

\paragraph{Case of freely indecomposable torsion-free hyperbolic groups.} By \cite[Theorem 2]{GuirardelLevittTreeOfCylinders}, 
if $G$ is a torsion-free hyperbolic group freely indecomposable with respect to a finitely generated subgroup $A$, 
the tree of cylinders $T_c$ of the cyclic JSJ deformation space of 
$G$ with respect to $A$ is itself a JSJ tree, and it is moreover strongly $2$-acylindrical: namely, 
if a non-trivial element stabilizes two distinct edges, they are adjacent to a common cyclically stabilized vertex. 

Moreover, in this case the tree of cylinders is not only universally elliptic, but in fact universally compatible: namely, 
given any cyclic $(G,A)$-tree $T$, there is a refinement $\hat{T}$ of $T_c$ which \emph{collapses} onto 
$T$ \cite[Theorem 6]{GuirardelLevittTreeOfCylinders}.

The JSJ 
deformation space being unique, it must be preserved under the action of $\Aut_A(G)$ on (isomorphism classes of) $(G,A)$-trees defined by twisting of the $G$-actions. 
Thus the tree of cylinder is a fixed point of this action, that is, 
for any automorphism $\phi \in \Aut_A(G)$, there is an automorphism 
$f:T_c \to T_c$ such that for any $x \in T_c$ and $g \in G$ we have $f(g \cdot x) = \phi(g) \cdot f(x)$.

\paragraph{JSJ relative to a non finitely generated subgroup.} 
Let $G$ be a torsion-free hyperbolic group freely indecomposable with respect to a subgroup $A$. 
By \cite[Proposition 3.7]{PerinSklinosHomogeneity}, there is a finitely generated subgroup 
$A_0$ of $A$ such that $G$ is freely indecomposable with respect to $A_0$ and $A$ 
is elliptic in any cyclic JSJ tree of $G$ with respect to $A_0$. 
The tree of cylinder of the cyclic JSJ deformation space with respect to $A_0$ clearly admits a 
common refinement with any cyclic $(G,A)$-tree, and satisfies all the 
properties we described above in the case $A$ was finitely generated. 
So whenever we refer to the tree of cylinders of the cyclic JSJ deformation space with respect to $A$ (for a possibly 
non finitely generated group), 
we tacitly mean the tree of cylinders of the cyclic JSJ deformation space with respect to $A_0$.  

\paragraph{The pointed cyclic JSJ tree.}
For our purposes, we need a tree with a basepoint which is a slight variation of the tree of cylinders. Note that this tree is not minimal. 
\begin{defi} Let $G$ be a torsion-free hyperbolic group freely indecomposable with respect to a subgroup $A$. 
Let $T_c$ be the tree of cylinders of the cyclic JSJ deformation space of $G$ with respect to $A$. 

We define the pointed cyclic JSJ tree $(T, v)$ of $G$ with respect to $A$ as follows 
\begin{itemize}
 \item if $A$ is cyclic, let $u$ be either, if it exists, the unique vertex whose stabilizer is exactly the centralizer 
  $C(A)$ of $A$, or otherwise, the unique vertex fixed by $C(A)$. We take $T$ to be the tree $T_c$ to which we add one 
  orbit of vertices $G.v$, one orbit of edges $G.e$ with $e = vu$, and we set $\Stab(v) = \Stab(e) = C(A)$.
 \item if $A$ is not cyclic, we take $T=T_c$ and we let $v$ be the unique vertex fixed by $A$.
\end{itemize}
\end{defi}

\begin{defi} A vertex of the pointed cyclic JSJ tree is said to be a Z-type vertex if it is cyclically 
stabilized and distinct from the basepoint $v$.

\end{defi}

\begin{rmk} \label{AcylindricalAndUnivCompatible} It is not hard to see that the pointed cyclic JSJ tree of $G$ with respect to 
$A$ is strongly $2$-acylindrical, universally compatible, and a fixed point of the action of $\Aut_A(G)$ on $(G,A)$-trees defined by twisting of the $G$-action.
\end{rmk}

\subsection{Dehn twists}

Let $G$ be a finitely generated group. 

\begin{defi} Let $e=uv$ be an edge in a $G$-tree $T$, and let $a$ be an element in the centralizer in $G$ of $\Stab(e)$. 
The $G$-tree $T'$ obtained from $T$ by collapsing all the edges not in the orbit of 
$e$ induces a splitting of $G$ as an amalgamated product $G = U*_{\Stab(e)} V$ or as an 
HNN extension $U*_{\Stab(e)}$ with stable letter $t$, where $U$ is the stabilizer of the image vertex of $u$ in $T'$. 

The Dehn twist by $a$ about $e$ is the automorphism of $G$ which restricts to the identity on 
$U$ and to conjugation by $a$ on $V$ (respectively sends $t$ to $at$ in the HNN case).
\end{defi}

The proof of the following lemma is immediate. We first recall that a $G$-tree is called 
{\em non-trivial} if there is no globally fixed point. It is 
not hard to see that if $G$ is finitely generated and $T$ is a non-trivial $G$-tree then
$T$ contains a unique minimal $G$-invariant subtree.
 
\begin{lemma} \label{DisjointDehnTwistFixes} Let $G$ be a finitely generated group, and let $T$ be a cyclic $G$-tree. 
Suppose $H$ is a finitely generated subgroup of $G$, whose minimal subtree $T_{H}$ in $T$ contains 
no translate of $e$. Then any Dehn twist about $e$ restricts to a conjugation on $H$ by an 
element which depends only on the connected component of $T\setminus G.e$ containing $T_H$.
\end{lemma}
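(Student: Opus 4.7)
The plan is to collapse all edges outside the orbit $G \cdot e$ to reduce to a one-edge splitting, where $\tau_a$ has an explicit description on each vertex group.

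Let $T'$ be the $G$-tree obtained from $T$ by collapsing every edge not in $G \cdot e$. Then $T'$ realizes $G$ as an amalgamated product $G = U *_C V$ or as an HNN extension $G = U *_C$ with stable letter $t$, where $C = \Stab(e)$. The vertices of $T'$ correspond bijectively to the connected components of $T \setminus G \cdot e$, a vertex's stabilizer being the setwise stabilizer of the corresponding component. Since $T_H$ is connected and contains no edge in $G \cdot e$, it lies inside a single component $C^*$, and the $H$-invariance of $T_H$ forces $H \subseteq \Stab(C^*)$. Writing $w$ for the vertex of $T'$ associated with $C^*$, one has $\Stab(w) = g V_0 g^{-1}$ for some $g \in G$, where $V_0 \in \{U,V\}$ in the amalgamated case or $V_0 = U$ in the HNN case.

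Next I apply the defining formula for $\tau_a$ on this subgroup: for any $x \in V_0$,
$$\tau_a(gxg^{-1}) = \tau_a(g)\,\tau_a(x)\,\tau_a(g)^{-1},$$
and by construction $\tau_a(x) = x$ when $V_0 = U$, while $\tau_a(x) = a x a^{-1}$ when $V_0 = V$. In either case the right-hand side rewrites as $\gamma\,gxg^{-1}\,\gamma^{-1}$, where $\gamma = \tau_a(g)g^{-1}$ if $V_0 = U$ and $\gamma = \tau_a(g)ag^{-1}$ if $V_0 = V$. Hence $\tau_a$ restricts to conjugation by $\gamma$ on $H \subseteq \Stab(w)$, which already proves the first assertion.

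The only remaining task is to verify that $\gamma$ depends only on $w$ (equivalently, only on $C^*$) and not on the particular $g$ chosen: if $g_1, g_2$ both send a fixed lift $\bar{v}_0$ of $w$ to $w$, then $x_0 := g_1^{-1} g_2 \in \Stab(\bar{v}_0) = V_0$, so $\tau_a(g_2) = \tau_a(g_1)\,\tau_a(x_0)$, and substituting $\tau_a(x_0) = x_0$ or $\tau_a(x_0) = a x_0 a^{-1}$ as appropriate (using that $a$ centralizes $C$ to slide it past if needed) gives $\tau_a(g_2) g_2^{-1} = \tau_a(g_1) g_1^{-1}$, and similarly with the $a$ inserted. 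I expect this small well-definedness check to be the only technical point in the argument; everything else is a direct reading of the Bass-Serre presentation of the one-edge splitting associated with $T'$.
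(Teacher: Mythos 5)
Your proof is correct and is exactly the natural unpacking the paper has in mind when it declares the lemma immediate: collapse $T$ to the one-edge splitting that defines the Dehn twist, observe that $T_H$ lies in a single connected component and hence $H$ lies in a single vertex group $\Stab(w) = g V_0 g^{-1}$ of the collapsed tree, and read off the conjugating element $\gamma$ from the explicit action of $\tau_a$ on $U$ and $V$ (and on the stable letter in the HNN case). The well-definedness check at the end is the right thing to verify, though the remark about ``sliding $a$ past'' is unnecessary---in the $V_0=V$ case the $a$'s already cancel directly once you substitute $\tau_a(x_0)=ax_0a^{-1}$ and $g_2^{-1}=x_0^{-1}g_1^{-1}$.
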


The following lemma describes Dehn twists with respect to Bass-Serre presentations.

\begin{lemma} \label{DehnTwistOnBassSerre} Let $G$ be a finitely generated group, and let $T$ be a 
cyclic $G$-tree with a Bass-Serre presentation $(T^1, T^0, (t_f)_{f\in E_1})$. Let $\tau_e$ be a Dehn 
twist by an element $a$ about an edge $e=uv$ of $T^1$. Then
\begin{itemize}
 \item for each vertex $x$ of $T^0$, the restriction of $\tau_e$ to $G_x$ is a conjugation by an element 
 $g_x$ which is $1$ if $x$ and $u$ are in the same connected component of $T^1\setminus\{e\}$, and $a$ otherwise;
 \item for any edge $f = xy'$ of $T^1$ with $x \in T^0$ and $t^{-1}_f \cdot y' = y \in T^0$, we have 
$$\tau_e(t_f) =
\begin{cases}
g_x t_f g^{-1}_{y} & \textrm{ if } f \neq e \\
a t_f & \textrm{ if } f=e \\ 
t_f a^{-1} & \textrm{ if } f=\bar{e}
\end{cases}
$$
\end{itemize}
\end{lemma}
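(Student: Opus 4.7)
The plan is to prove the lemma by unwinding the definition of the Dehn twist through the collapse map $p:T \to T'$ obtained from $T$ by collapsing every edge not in $G\cdot e$. This collapse produces either the amalgam $G = U *_C V$, with $U, V, C$ the stabilizers of $\bar u = p(u)$, $\bar v = p(v)$, $\bar e = p(e)$ (so that $\tau_e$ is the identity on $U$ and conjugation by $a$ on $V$), or the HNN extension $G = U*_C$ with stable letter $t$ sending $\bar u$ to $\bar v$ (so that $\tau_e$ is the identity on $U$ and sends $t$ to $at$).

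For the first bullet, the plan is to exploit the following geometric fact: for $x \in T^0 \subseteq T^1$, the unique geodesic in $T$ from $x$ to $u$ lies in the subtree $T^1$ and, since $T^1$ meets each $G$-orbit of edges in exactly one representative, crosses an edge of $G\cdot e$ if and only if it crosses $e$ itself. Hence $p(x) = p(u)$ precisely when $x$ and $u$ lie in the same connected component of $T^1 \setminus \{e\}$, in which case $G_x \subseteq U$ and $g_x = 1$. Otherwise $p(x) = p(v)$, placing $G_x$ inside $V$ in the amalgam case or inside $\Stab(t \cdot \bar u) = t U t^{-1}$ in the HNN case; in the first situation $\tau_e|_{G_x}$ is conjugation by $a$ directly from the definition, while in the second the computation $\tau_e(tut^{-1}) = (at)u(at)^{-1} = a(tut^{-1})a^{-1}$ gives the same conclusion. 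Either way $g_x = a$.

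For the second bullet, I would argue by case analysis on $f \in E_1$. If $f = e$ (which forces us to be in the HNN case), then $p(y) = \bar u$ and $p(y') = \bar v = t \cdot \bar u$ imply $t_e = t u_0$ for some $u_0 \in U$, and applying $\tau_e$ yields $\tau_e(t_e) = (at)u_0 = a t_e$. The case $f = \bar e$ follows by inversion. For $f \neq e, \bar e$, the edge $f$ lies in $T^1$ but not in $G \cdot e$, so its two endpoints are identified in $T'$, giving $p(x) = p(y') = t_f \cdot p(y)$; hence $p(x)$ and $p(y)$ lie in the same $G$-orbit of $T'$. A short subcase analysis, depending on whether each of $p(x), p(y)$ equals $\bar u$ or $\bar v$, produces in every situation a decomposition of $t_f$ (as an element of $U$, of $V$, of $tUt^{-1}$, or as $u_0 t^{\pm 1} u_1$) from which $\tau_e(t_f) = g_x t_f g_y^{-1}$ follows immediately.

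The proof is essentially bookkeeping, with the main challenge being to organize the case analysis cleanly rather than any new conceptual input. The subtlest step will be the HNN subcase of the second bullet in which $p(x) \neq p(y)$: there one must first decompose $t_f$ as $u_0 t^{\pm 1} u_1$ (using that in $T'$ the element $t_f$ crosses exactly one $G$-translate of $\bar e$) before applying $\tau_e$, and it is precisely the shift $t \mapsto at$ that produces the factor of $a^{\pm 1}$ appearing in $g_x$ or $g_y^{-1}$.
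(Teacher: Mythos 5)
Your proposal is correct and takes essentially the same approach as the paper's (one-line) proof: both reduce to observing that the collapse map $p$ sends every vertex of $T^0$ into $\{p(u),p(v)\}$ and then case-check, exactly as you outline. One small caution for the write-up: in the HNN subcase with $p(x)\neq p(y)$, the relation $p(x)=t_f\cdot p(y)$ actually forces the cleaner normal form $t_f=t\,u_1$ or $t_f=u_0\,t^{-1}$ (with $u_0,u_1\in U$), not the looser $u_0\,t^{\pm1}u_1$; this is what makes $\tau_e(t_f)=g_x t_f g_y^{-1}$ come out directly without having to commute $a$ past an arbitrary element of $U$.
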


\begin{figure}[ht!]

\centering
\includegraphics[width=.99\textwidth]{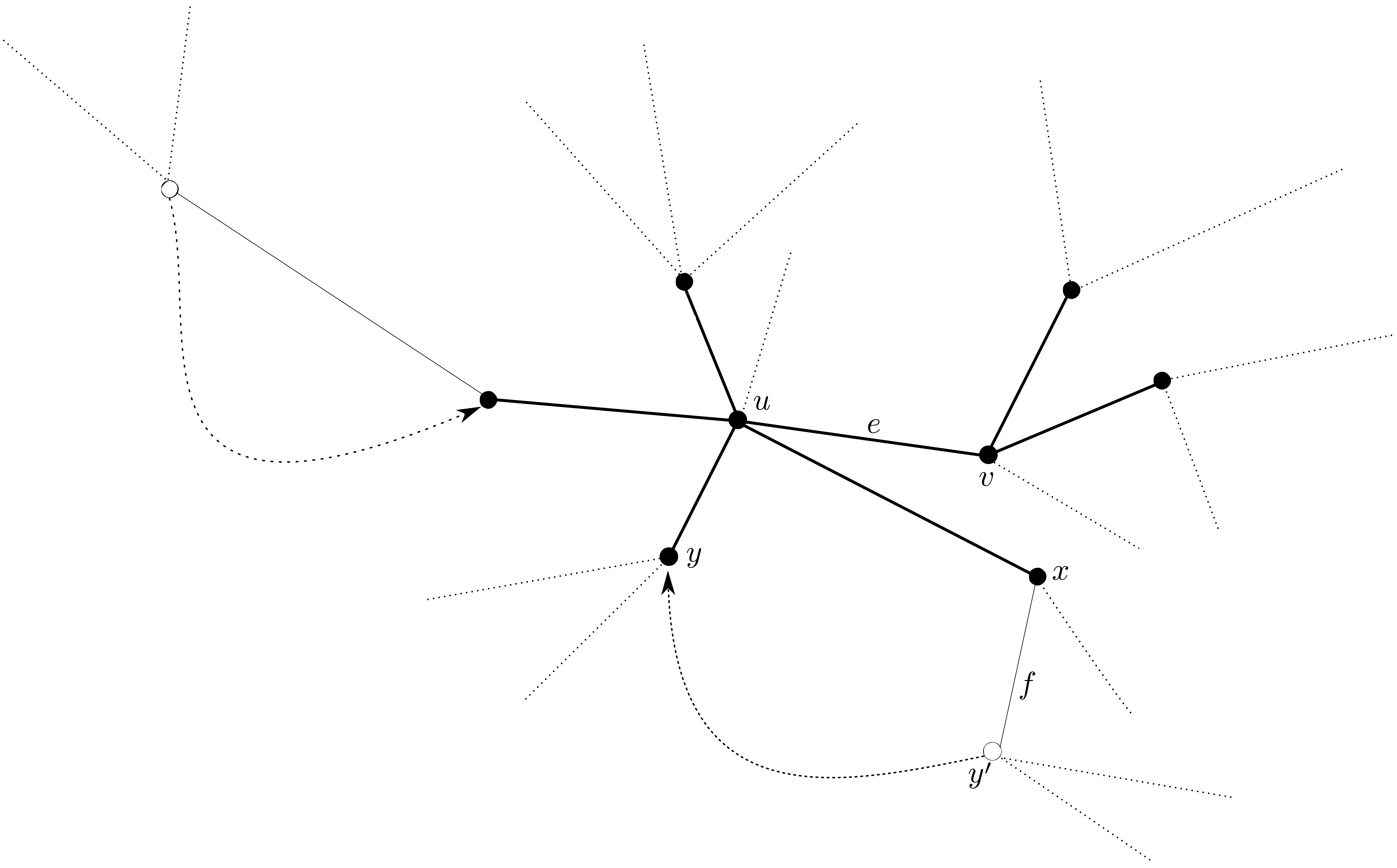}
\caption{A choice of $T^1$ and $T^0$ (thick subtree).}

\end{figure}

\begin{proof} The proof is straightforward, it suffices to note that the images of $x,y$ and $y'$ 
under the map $p$ which collapses all the edges not in the orbit of $e$ all belong to $\{ p(u), p(v)\}$, and to consider the various possibilities.
\end{proof}

\newpage

\begin{rmk} \label{DehnTwistFixingAGivenVertex} If $\tau$ is the Dehn twist by $a$ about $e$, and $\tau'$ 
the Dehn twist by $a^{-1}$ about $\bar{e}$, we have $\tau = \Conj(a) \circ \tau'$.

In particular, if $(T^1, T^0, (t_f)_f)$ is a Bass-Serre presentation for $T$ such that $e$ is in $T^1$, and if $R$ 
is a connected component of $T^0\setminus \{e\}$, then there exists an element $g$ such that $\Conj(g) \circ \tau$ 
is a Dehn twist about $e$ or $\bar{e}$ which restricts to the identity on $G_x$ for any vertex $x$ of $R$.
\end{rmk}

The next lemma gives a useful relation between Dehn twists about edges adjacent to a common cyclically stabilized vertex:
\begin{lemma} \label{RelationDehnTwists} Let $G$ be a finitely generated group, and let $T$ be a $G$-tree. 
Suppose $v$ is a vertex of $T$ whose stabilizer is cyclic, and let $e_1 = u_1v, \ldots, e_r=u_rv$ be 
representatives of the orbits of edges adjacent to $v$. Let $z$ be an element in the centralizer of $\Stab(v)$, and 
denote by $\tau_i$ the Dehn twist about $e_i$ by $z$. Then we have:
$$ \tau_{1} \ldots \tau_{r} = \Conj(z^{r-1}).$$
\end{lemma}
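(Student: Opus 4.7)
I plan to evaluate both sides on a generating set of $G$ coming from a carefully chosen Bass-Serre presentation, using Lemma \ref{DehnTwistOnBassSerre} to describe each $\tau_i$ explicitly. I would fix a Bass-Serre presentation $(T^1, T^0, (t_f)_{f \in E_1})$ of $T$ in which $T^1$ contains the entire star $\{v, e_1, u_1, \ldots, e_r, u_r\}$, and in which $v$ lies in $T^0$. Since $e_1, \ldots, e_r$ exhaust the orbits of edges at $v$ and $T^1$ meets each edge orbit in exactly one representative, no edge of $T^1$ outside this star is incident to a translate of $v$; so for every stable letter $t_f$, both endpoints $x \in T^0$ and $y = t_f^{-1} y' \in T^0$ of $f$ lie outside $G \cdot v$.

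By Lemma \ref{DehnTwistOnBassSerre}, $\tau_i$ restricts to $G_x$ as conjugation by some $g^{(i)}_x \in \{1, z\}$, equal to $1$ precisely when $x$ lies in the component of $T^1 \setminus \{e_i\}$ containing $u_i$. Reading this off the star gives $g^{(i)}_v = z$ for every $i$; and for $x \in T^0 \setminus \{v\}$ a unique index $j(x)$ with $g^{(j(x))}_x = 1$ and $g^{(i)}_x = z$ otherwise. In particular, $N_x := \#\{i : g^{(i)}_x = z\}$ equals $r$ when $x = v$ and $r-1$ otherwise.

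The key reduction is then the identity $\tau_i(z) = z$ for every $i$. In the setting where this lemma is used, the tree $T$ is strongly $2$-acylindrical and $\Stab(v)$ is maximal cyclic (so that $C_G(\Stab(v)) = \Stab(v)$), hence $z$ lies in $G_v$ itself and $\tau_i$ is even the identity on $G_v$. Granting this, conjugations by the various $g^{(i)}_x$ commute in the composition, and one obtains $(\tau_1 \circ \cdots \circ \tau_r)|_{G_x} = \Conj(z^{N_x})$, which gives $\Conj(z^{r-1})$ for $x \neq v$ and $\id = \Conj(z^{r-1})|_{G_v}$ for $x = v$ (using again that $z^r$ centralizes $G_v$). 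A parallel computation on each stable letter, using the formula of Lemma \ref{DehnTwistOnBassSerre} together with $N_x = N_y = r-1$, yields $(\tau_1 \circ \cdots \circ \tau_r)(t_f) = z^{r-1} t_f z^{-(r-1)}$. Since the vertex stabilizers together with the stable letters generate $G$, this proves the identity. The main obstacle is really the clean identity $\tau_i(z) = z$: it is what forces the product of $r$ twists to collapse to an honest inner automorphism rather than producing a genuinely new modular element, and is the one step that uses more than pure Bass-Serre bookkeeping.
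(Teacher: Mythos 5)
Your overall approach is the same as the paper's: fix a Bass--Serre presentation with the star of $v$ inside $T^1$, apply Lemma \ref{DehnTwistOnBassSerre} to compute each $\tau_i$ on vertex groups and stable letters, and observe that exactly one of the twists restricts trivially on each vertex group other than $G_v$. However there are two genuine problems.

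The concrete one is the claim about stable letters. You assert that because $T^1$ meets each edge orbit once and $e_1,\ldots,e_r$ exhaust the orbits at $v$, ``for every stable letter $t_f$, both endpoints \ldots lie outside $G\cdot v$.'' This does not follow, and it is false in general: if some $u_i$ lies in the orbit of $v$ (a loop in the quotient graph) or of another $u_j$, then at most one of the relevant endpoints can be placed in $T^0$, so the corresponding star edge $e_i$ (equivalently $\bar e_i$) is itself a stable-letter edge, with the translate $v$ as an endpoint. For such an edge your formula $(\tau_1\circ\cdots\circ\tau_r)(t_f) = z^{r-1}t_f z^{-(r-1)}$ is derived from the incorrect premise $N_x=N_y=r-1$, and the ``parallel computation'' is simply wrong. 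The paper's proof notices exactly this subtlety --- ``note first that $f$ is distinct from all the edges $e_i$ (though we can have $f=\bar e_i$)'' --- and carries out a separate computation in that case. That case analysis is genuinely needed and is missing from your argument.

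The subtler issue concerns $\tau_i(z)=z$. You identify this as the key nontrivial step, which is indeed the crux that the paper dispatches with a one-line ``it is easy to see.'' But you justify it by importing hypotheses external to the lemma: that the tree is the tree of cylinders, that $\Stab(v)$ is maximal cyclic, and that $C_G(\Stab(v))=\Stab(v)$. None of these appear in the statement, which is for an arbitrary finitely generated group acting on an arbitrary $G$-tree with a vertex whose stabilizer is cyclic and with $z$ merely in the centralizer of $\Stab(v)$. As a result your argument proves only a restricted version of the lemma sufficient for the paper's application, not the lemma as stated; the paper, by contrast, tries to prove the general statement but leaves its own ``easy to see'' step without justification --- the justification it needs is essentially the observation you supply, only you supply it under a hypothesis the statement does not grant you. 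To repair your argument one should either add the needed hypothesis to the lemma (and point out that it holds in the application), or prove directly that when $\Stab(v)$ is infinite cyclic the element $z$ is elliptic in the collapsed splitting and hence lies in one of the two vertex groups, from which $\tau_i(z)=z$ follows.
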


\begin{proof}Choose a Bass-Serre presentation $(T^1, T^0, (t_f)_{f})$ such that $v \in T^0$ and 
all the edges $e_i$ are contained in $T^1$. 

It is easy to see that both $\tau_1 \ldots \tau_r$ and $\Conj(z^{r-1})$ restrict to the identity on $\Stab(v)$ and on $\langle z \rangle$. 
If $w$ is a vertex of $T^0$ distinct from $v$, the Dehn twist $\tau_{e_i}$ restricts on $\Stab(w)$ to a conjugation by an element 
$g^i_w$ which is $1$ if $w$ lies in the same connected component of $T^1\setminus\{e_i\}$ as $u_i$, and $z$ otherwise. 
Now the first alternative holds for exactly one value of $i$, thus $\tau_{e_1} \ldots \tau_{e_r}$ restricts to a conjugation by $z^{r-1}$ on $\Stab(w)$. 

If $f=wx'$ is an edge of $T^1\setminus T^0$ with $w$ in $T^0$, note first that $f$ is distinct from all the edges $e_i$ (though we can have $f=\bar{e}_i$). 
By Lemma \ref{DehnTwistOnBassSerre}, if $x = t^{-1}_f \cdot x'$ we have 
$$\tau_i(t_f) = \begin{cases}
g^i_w t_f (g^i_x)^{-1} & \textrm{ if } f \neq \bar{e}_i \\
t_f z^{-1} & \textrm{ if } f = \bar{e}_i \\
\end{cases}$$

If $f$ is distinct from $\bar{e}_i$ for all values of $i$ we conclude as 
before by noting that $g^i_w$ (respectively $g^i_x$) is $z$ for all but one value of $i$. If $f= \bar{e}_i$, 
then $w=v$, $x'=u_i$ and $x$ is not in the same connected component as $u_i$, so $g^j_w = z$ for all $j$, and $g^j_x=z$ 
for all but one value of $j$, and this value cannot be $i$. Thus in both cases, we get that 
$\tau_{1} \ldots \tau_{r} (t_f) = z^{r-1} t_f z^{1-r}$.
\end{proof}

\subsection{Vertex automorphisms}

We want to extend automorphisms of stabilizers of vertices in a $G$-tree to automorphisms of $G$. For this we give

\begin{defi} Let $G$ be a finitely generated group acting on a tree $T$, and let $v$ be a vertex in $T$. 
Denote by $p$ the map collapsing all the orbits of edges of $T$ except those of the edges adjacent to $v$.

An automorphism $\sigma$ of $G$ is called a vertex automorphism associated to $v$ if $\sigma(G_v) = G_v$, 
and if for every edge $e = vw$ of $p(T)$ adjacent to $v$, it restricts to a conjugation by an element $g_e$ on the stabilizer of $e$, 
as well as on the stabilizer of $w$ if $w$ is not in the orbit of $v$.
\end{defi}

\begin{rmk} \label{BuildingVertexAM} If $v$ is a vertex in a $G$-tree $T$, and if $\sigma_0$ is
an automorphism of $\Stab_G(v)$ which restricts to a conjugation by an element $g_e$ on the stabilizer of each edge $e$ adjacent to $v$, 
we can extend $\sigma_0$ to a vertex automorphism $\sigma$ of $G$. 
For this, choose a Bass-Serre presentation $(T^1, T^0, (t_f)_f)$ for $G$ with respect to $p(T)$ such that $p(v) \in T^0$, 
and such that the orbits of edges adjacent to $p(v)$ are represented in $T^1$ by edges adjacent to $p(v)$. 
We then define $\sigma$ as follows: 
\begin{itemize}
 \item on $G_{p(v)}$, $\sigma$ restricts to $\sigma_0$;
 \item for any vertex $x$ of $T^0$ distinct from  $p(v)$, $\sigma$ restricts on $G_x$ to conjugation by $g_e$ where $e = p(v)x$;
 \item for $f=p(v)x'$ an edge of $T^1\setminus T^0$ with $x= t_f^{-1} \cdot x'$ in $T^0$, we set 
 $$\sigma(t_f) = \begin{cases}
g_ft_fg_e^{-1} \ \textrm{where} \ e=p(v)x & \textrm{ if } x \neq p(v) \\
g_ft_fg_{f'}^{-1} \ \textrm{where} \ f'=t_f^{-1}f & \textrm{ if } x = p(v) \\
\end{cases}$$ 
\end{itemize}
\end{rmk}

\begin{figure}[ht!]

\centering
\includegraphics[width=0.99\textwidth]{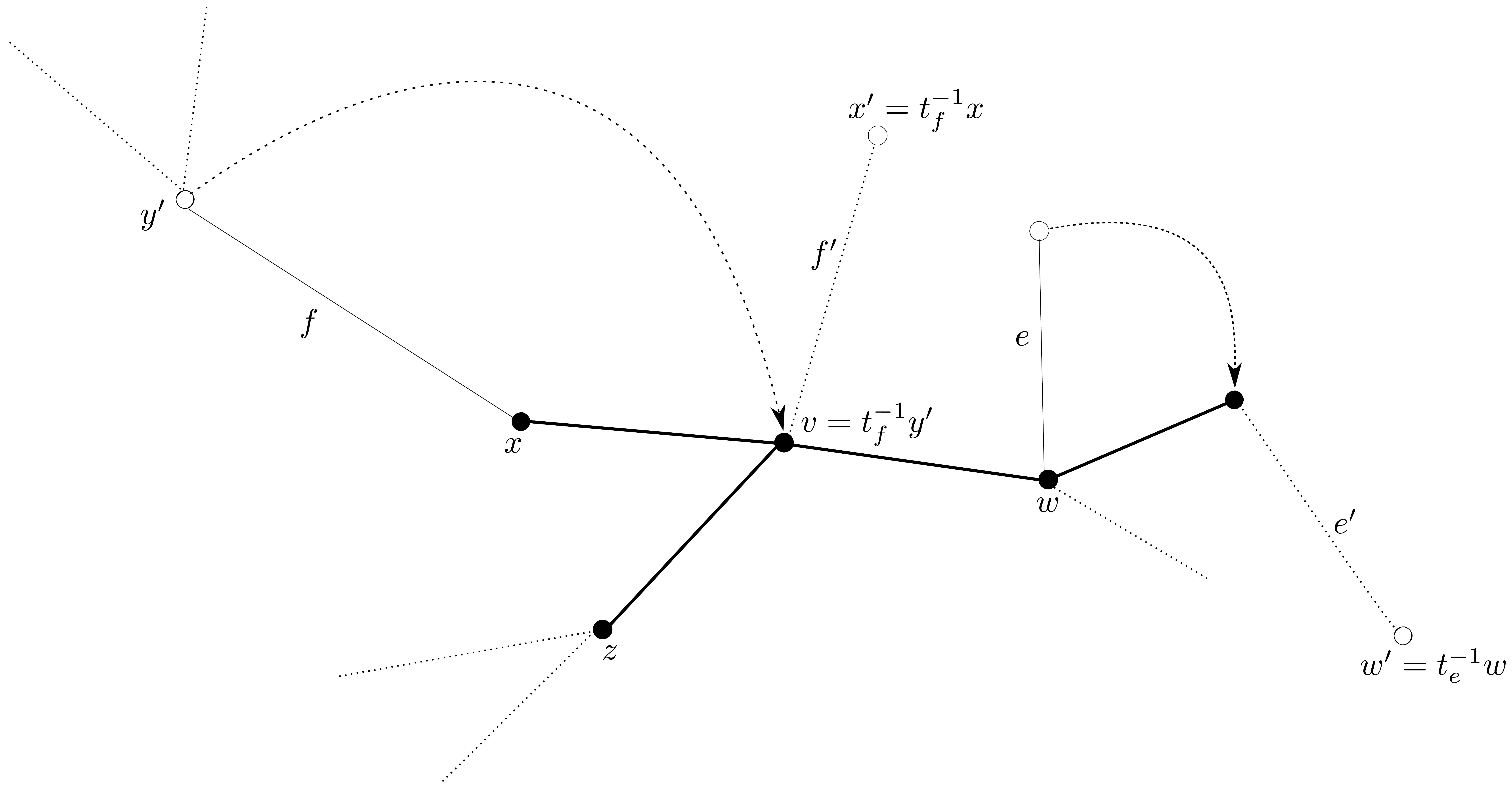}
\caption{A Bass-Serre presentation $(T^1,T^0,(t_f)_f)$ for the action of $G$ on $T$ 
together with the translates of the edges in $T^1\setminus T^0$ by the stable letters.}

\end{figure}

\begin{figure}[ht!]

\centering
\includegraphics[width=0.7\textwidth]{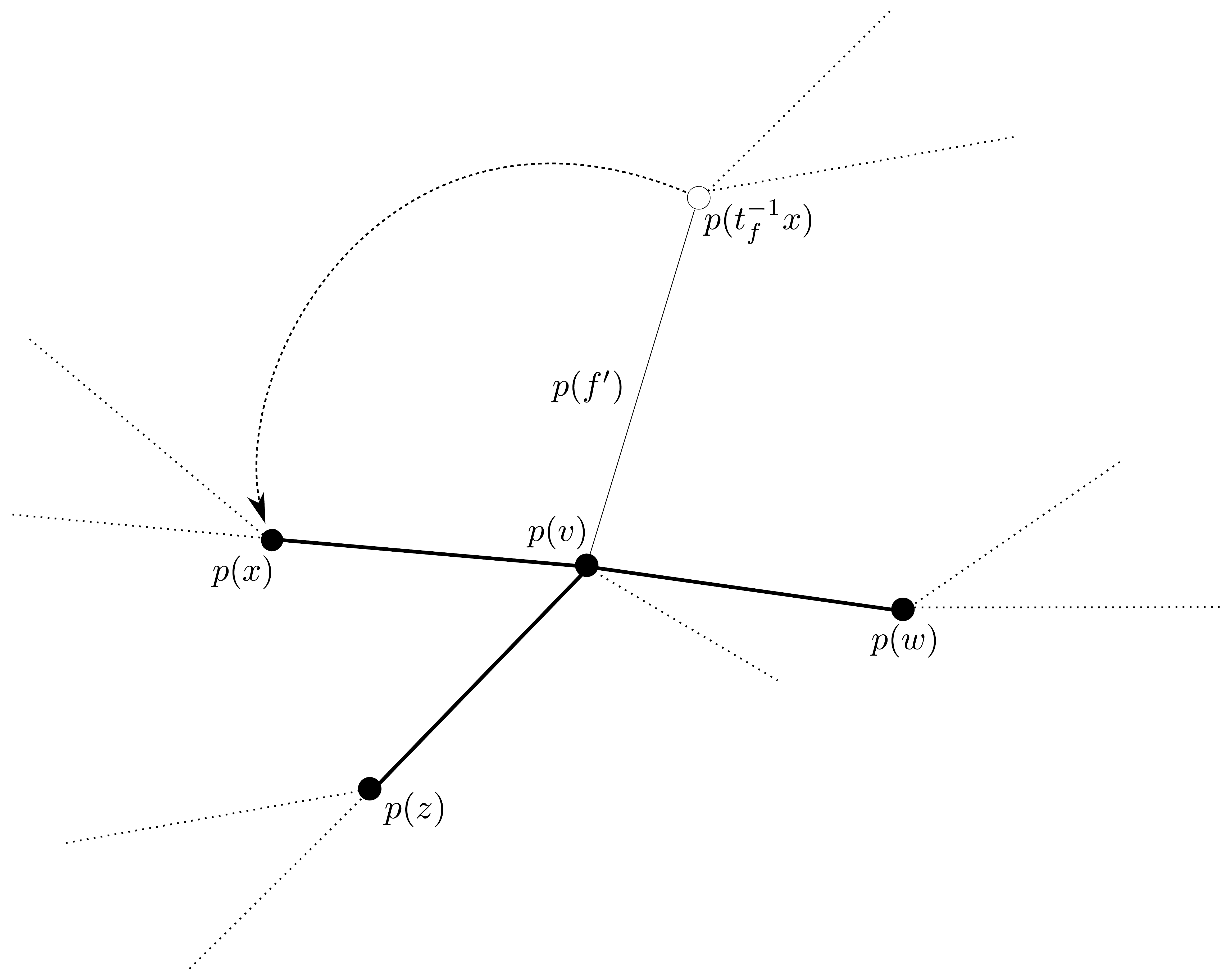}
\caption{A Bass-Serre presentation for the corresponding action of $G$ on $p(T)$.}

\end{figure}

\begin{rmk} \label{VertexAMDifferByDT} 
If $\sigma'$ is another vertex automorphism associated 
to $v$ such that $\sigma\mid_{G_v} = \sigma'\mid_{G_v}$, then for any vertex $w$ of $p(T)$ 
adjacent to $p(v)$ by an edge $e$, the restriction of $\sigma$ to $G_w$ is conjugation by an element $g'_{w}$ such that 
$g^{-1}_w g'_w$ lies the centralizer of $G_e$.

It is therefore easy to deduce that $\sigma^{-1} \circ \sigma'$ is a product of Dehn twists about edges of $p(T)$.

\end{rmk}

We now give an analogue of Lemma \ref{DisjointDehnTwistFixes} for vertex automorphisms.

\begin{lemma} \label{DisjointVertexAMFixes} Let $H$ be a finitely generated subgroup of $G$, and denote by $T_{H}$ the minimal subtree of $H$ in $T$. 

If no translate of $T_{H}$ contains $v$, any vertex automorphism $\sigma_v$ associated to $v$ restricts to a conjugation on $H$.
\end{lemma}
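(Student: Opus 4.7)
The plan is to push the question to the collapsed tree $p(T)$, obtained from $T$ by collapsing every orbit of edges not incident to $v$, so that $\sigma_v$'s prescribed behavior on the stabilizers of vertices adjacent to $v$ becomes directly applicable.

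First, I would note that the hypothesis is equivalent to $T_H \cap G \cdot v = \emptyset$. Since the collapse $p$ does not identify any translate of $v$ with another vertex, this amounts to $p(T_H) \cap G \cdot v = \emptyset$, where $p(T_H)$ is the minimal $H$-invariant subtree of $p(T)$. Every edge of $p(T)$ is, by construction, incident to some translate of $v$, so it has an endpoint in $G \cdot v$. Consequently, any connected subtree of $p(T)$ disjoint from $G \cdot v$ contains no edges, and $p(T_H)$ must reduce to a single vertex $w' \notin G \cdot v$. In particular $H \leq G_{w'}$.

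Since $p(T)$ is connected and $w'$ lies outside $G \cdot v$, the vertex $w'$ admits a neighbor in $G \cdot v$, so we can write $w' = g \cdot w$ where $e = vw$ is an edge of $p(T)$ with $w \notin G \cdot v$. By the defining property of a vertex automorphism, $\sigma_v$ restricts on $G_w$ to conjugation by some element $g_e$. A short computation, using $\sigma_v(ghg^{-1}) = \sigma_v(g)\sigma_v(h)\sigma_v(g)^{-1}$ together with $\sigma_v|_{G_w} = \Conj(g_e)$, shows that on $G_{w'} = gG_wg^{-1}$ the automorphism $\sigma_v$ restricts to conjugation by the single element $\sigma_v(g)\,g_e\,g^{-1}$; restricting further to $H \leq G_{w'}$ yields the desired conjugation.

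The only point that really needs care is this last step: the definition of vertex automorphism prescribes conjugation on $G_w$ only for $w$ adjacent to $v$, not on an arbitrary translate $G_{w'}$, and one must check that the conjugating element transforms correctly when passing between orbit representatives. This is handled by the short computation just indicated.
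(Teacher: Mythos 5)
Your proposal is correct and follows the same approach the paper takes: collapse $T$ via $p$ to the tree whose only edges are those adjacent to translates of $v$, observe that $p(T_H)$ is a single vertex outside $G\cdot p(v)$, and then apply the defining property of vertex automorphisms to the stabilizer of that vertex. The paper's proof states this in one line; your write-up makes explicit the two small points the paper leaves implicit, namely that $p(T_H)$ is indeed a single vertex because every edge of $p(T)$ is incident to $G\cdot v$, and that the prescribed conjugation on $G_w$ for $w$ adjacent to $v$ transfers (by the homomorphism property) to a conjugation on $G_{gw}$ for any $g$, which is the vertex you actually land on.
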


\begin{proof} The image of $T_H$ by $p$ is a vertex $x$ of $p(T)$, thus $\sigma_v$ restricts to a conjugation on $\Stab(x)$ which contains $H$. 
\end{proof}

The following lemma describes vertex automorphisms with respect to Bass-Serre presentations.
\begin{lemma} \label{VertexAMOnBassSerre} Let $G$ be a finitely generated group, and let 
$T$ be a $G$-tree. Let $(T^1, T^0, (t_f)_{f \in E_1})$ be a Bass-Serre presentation for $G$ with respect to $T$. 
Let $\sigma$ be a vertex automorphism of $G$ associated to a vertex $v$ of $T^0$. For a vertex $u$ or an edge 
$e$ of $T \setminus G.v$, we denote by $[u]$ (respectively $[e]$) 
the connected component of $T \setminus G.v$ containing $u$ (respectively $e$).

There exists an element $g_R$ of $G_v$ associated to each connected component $R$ of $T \setminus G.v$ adjacent to $v$, so that
\begin{itemize}
 \item for each vertex $u$ of $T^0 \setminus \{v\}$, the restriction of $\sigma$ to $G_u$ is conjugation by $g_{[u]}$;
 \item for any edge $f$ of $T^1\setminus T^0$ with $f' =t^{-1}_f \cdot f$, we have 
 $\sigma(t_f)= g_{[f]} z t_f (g_{[f']})^{-1}$ for $z$ in $C(\Stab (f))$. 
\end{itemize}
\end{lemma}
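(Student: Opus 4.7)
The plan is to analyze the action of $\sigma$ on the standard generators of $G$ provided by the Bass-Serre presentation $(T^1, T^0, (t_f)_{f \in E_1})$: the vertex stabilizers $G_u$ for $u \in T^0$ and the stable letters $t_f$ for $f \in E_1$.

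First I would verify that, for every $u \in T^0 \setminus \{v\}$ and every $f \in E_1$, the components $[u]$, $[f]$ and $[f']$ (with $f' = t_f^{-1}f$) are adjacent to $v$ in $T \setminus G.v$. Since $T^0$ is a connected subtree containing exactly one representative of each $G$-orbit of vertices, the geodesic in $T^0$ from any $u \in T^0 \setminus \{v\}$ to $v$ meets no other translate of $v$; hence $u$ lies in the same component $R$ as the vertex $v_1$ neighbouring $v$ on this geodesic, and $R$ is adjacent to $v$ via the edge $vv_1$. The same argument applied to the endpoint of $f$ (respectively $f'$) lying in $T^0$ shows that $[f]$ and $[f']$ are adjacent to $v$ in the nondegenerate case; in the degenerate case, where an endpoint of $f$ is itself a translate of $v$, the corresponding $g_{[\cdot]}$ is set to be $1$.

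Next, for each component $R$ adjacent to $v$, I let $e_R = vu_R$ denote the unique edge of $T$ from $v$ into $R$, and let $p$ be the collapse map from the definition of vertex automorphism. The condition on $\sigma$ yields an element $g_R$ such that $\sigma$ restricts to $\Conj(g_R)$ on both $\Stab(e_R)$ and $\Stab(p(R))$ (the latter since $p(R) \notin G.v$). Using $\sigma(G_v) = G_v$ together with the freedom to modify $g_R$ by an element of $C(\Stab(e_R))$, one can arrange $g_R \in G_v$. The first formula is then immediate: for $u \in T^0 \setminus \{v\}$ we have $u \in R := [u]$, so $G_u \leq \Stab(p(R))$ and thus $\sigma$ restricts on $G_u$ to $\Conj(g_{[u]})$. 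For the second formula, I use that $t_f$ is characterized up to $C(\Stab(f))$ by the relation $t_f \Stab(f') t_f^{-1} = \Stab(f)$. Since $\Stab(f) \leq G_x$ with $x \in [f]$ (and similarly $\Stab(f') \leq G_y$ with $y \in [f']$), the first formula gives that $\sigma$ restricts to $\Conj(g_{[f]})$ on $\Stab(f)$ and to $\Conj(g_{[f']})$ on $\Stab(f')$. Applying $\sigma$ to the defining relation, one checks that $g_{[f]}^{-1} \sigma(t_f) g_{[f']}$ conjugates $\Stab(f')$ to $\Stab(f)$ in exactly the same way as $t_f$ does; it therefore differs from $t_f$ by an element $z \in C(\Stab(f))$, yielding $\sigma(t_f) = g_{[f]} z t_f g_{[f']}^{-1}$.

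The main technical obstacle is the claim that $g_R$ can be chosen inside $G_v$: this requires checking that the inner action of $g_R$ on $\Stab(e_R)$ can be realized by an element of $G_v$, exploiting the invariance $\sigma(G_v) = G_v$ and the centralizer flexibility in the vertex automorphism definition. The rest of the proof is routine bookkeeping, with additional care needed for the degenerate cases in which an edge endpoint lies in $G.v$.
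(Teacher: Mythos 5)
Your proof follows the same route as the paper's: pass to the collapse map $p$, let $g_R$ be the conjugating element furnished by the definition of a vertex automorphism for the edge $p(e_R)$ and the vertex $p(R)$, observe that $G_u \le \Stab(p([u]))$ for each $u \in T^0\setminus\{v\}$, and derive the formula for $\sigma(t_f)$ by applying $\sigma$ to $t_f^{-1}\Stab(f)t_f=\Stab(f')$ and reading off that the discrepancy lies in $C(\Stab(f))$.

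Two small points of imprecision, neither affecting the overall approach. First, the membership $g_R\in G_v$ is not something you can deduce from $\sigma(G_v)=G_v$ together with the freedom to adjust by $C(\Stab(e_R))$; that argument only lets you vary $g_R$ within its coset of $C(\Stab(e_R))$, and does not guarantee that this coset meets $G_v$. What the paper actually does is read this off the definition of a vertex automorphism, where (as in Remark~\ref{BuildingVertexAM}) the conjugating elements $g_e$ are taken in $G_v$ -- this is what the ``by definition'' in the paper's proof invokes. Second, in your degenerate case where $f$ (or $f'$) has an endpoint in $G\cdot v$, setting $g_{[\cdot]}=1$ is both incorrect and unnecessary: the component $[f]$ of $T\setminus G\cdot v$ is still well defined and adjacent to $v$ -- it is either the component of the other endpoint, or a single open edge, which is the case the paper treats separately -- and the already-defined $g_{[f]}$ still satisfies $\sigma|_{\Stab(f)}=\Conj(g_{[f]})$, so the computation of $\sigma(t_f)$ goes through without modification.
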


\begin{proof} Let $R$ be a connected component of $T\setminus G \cdot v$ adjacent to $v$. 
All the vertices in $R$ map by the collapse map $p$ to a same vertex $x_R$ of $p(T)$ 
adjacent to $p(v)$ and distinct from $p(v)$, so their stabilizers are contained in $\Stab(x_R)$, 
on which $\sigma$ restricts to conjugation by an element $g_R$ of $G_v$ by definition. 
If $R$ does not contain any vertices, it is reduced to a single edge $e$ adjacent to $v$ and 
we let $g_R$ be such that $\sigma$ restricts to a conjugation by $g_R$ on the stabilizer of $e$.

For the second point, note that $\sigma$ restricts to conjugation by $g_{[f]}$ on $\Stab(f)$, while it restricts to conjugation by 
$g_{[f']}$ on $\Stab(f') = t^{-1}_f \Stab(f) t_f$. Hence we get that for any element $h \in \Stab(f)$ we have 
$$ g_{[f']} t_f^{-1} \; h \; t_f g^{-1}_{[f']} = \sigma (t_f^{-1}h t_f) = \sigma(t_f)^{-1} g_{[f]}  \; h \;  g^{-1}_{[f']} \sigma(t_f) $$

This implies that $\sigma(t_f)= g_{[f]} z t_f g_{[f']}^{-1}$ for some element $z$ in the centralizer of $\Stab(f)$. 
\end{proof}

\begin{rmk} \label{VertexAMFixingAGivenVertex} 
Let $\sigma$ be a vertex automorphism with support $v \in T$, 
let $R$ be a connected component of $T \setminus G.v$ adjacent to $v$: for any vertex $x$ of $R$, $\sigma$ restricts on $G_x$ to 
conjugation by an element $g_R$ of $\Stab(v)$. 
Then $\Conj(g^{-1}_R) \circ \sigma$ is a vertex automorphism with support $v$, 
and it restricts to the identity on $G_x$ for any vertex $x$ of $R$. 
\end{rmk}

\subsection{Elementary automorphisms}

\begin{defi} Let $T$ be a $G$-tree. 
If $\rho$ is a Dehn twist about an edge $e$ of $T$, or a vertex automorphism associated to a vertex $v$ of $T$, 
we say it is an elementary automorphism associated to $T$. We call the edge $e$ 
(respectively the vertex $v$) the support of $\rho$ and denote it $\Supp(\rho)$. 
\end{defi}

\begin{lemma} \label{ElementaryAMOnTranslatedSupp} 
Suppose $\rho$ is an elementary automorphism associated to a $G$-tree $T$. 
Then for any $g \in G$, $Conj(g\rho(g^{-1})) \circ \rho$ is an elementary automorphism of support $g \cdot \Supp(\rho)$.
\end{lemma}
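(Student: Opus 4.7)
My plan is to first reduce the statement to the standard fact that conjugating an elementary automorphism by an inner automorphism of $G$ translates its support by the $G$-action. The key algebraic identity is
\[
\Conj(g\rho(g^{-1})) \circ \rho \;=\; \Conj(g) \circ \rho \circ \Conj(g^{-1}),
\]
which is a one-line verification: applied to $x \in G$, the left-hand side is $g\rho(g^{-1}) \rho(x) \rho(g) g^{-1} = g \rho(g^{-1}xg) g^{-1}$, which equals the right-hand side. Writing $\rho^g := \Conj(g) \circ \rho \circ \Conj(g^{-1})$, it therefore suffices to prove that $\rho^g$ is an elementary automorphism whose support is $g \cdot \Supp(\rho)$.

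I would then split into the two cases of the definition. If $\rho$ is the Dehn twist by $a$ about an edge $e = uv$, then collapsing all edges of $T$ outside the orbit of $e$ yields a splitting $G = U *_C V$ or $U *_C$ with $C = \Stab(e)$; collapsing instead all edges outside the orbit of $g \cdot e$ gives the $g$-conjugate splitting, with vertex groups $gUg^{-1}, gVg^{-1}$ and edge group $g C g^{-1} = \Stab(g \cdot e)$. A direct computation (using that $\rho$ restricts to the identity on $U$ and to $\Conj(a)$ on $V$) shows that $\rho^g$ restricts to the identity on $gUg^{-1}$ and to $\Conj(gag^{-1})$ on $gVg^{-1}$. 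Since $gag^{-1}$ centralizes $\Stab(g \cdot e)$, this identifies $\rho^g$ as the Dehn twist by $gag^{-1}$ about $g \cdot e$, with an analogous check on the stable letter in the HNN case.

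If instead $\rho$ is a vertex automorphism associated to $v$, I would verify the two defining conditions directly: first, $\rho^g(G_{g \cdot v}) = \Conj(g)\bigl(\rho(G_v)\bigr) = g G_v g^{-1} = G_{g \cdot v}$; second, for any edge $f = g \cdot e$ adjacent to $g \cdot v$ in the tree obtained by collapsing all edges outside the orbits of edges adjacent to $g \cdot v$, if $\rho$ restricts to $\Conj(g_e)$ on $\Stab(e)$ then $\rho^g$ restricts to $\Conj(g g_e g^{-1})$ on $\Stab(f) = g \Stab(e) g^{-1}$, and likewise on the stabilizer of the far endpoint when it does not lie in the orbit of $g \cdot v$. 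This identifies $\rho^g$ as a vertex automorphism of support $g \cdot v$.

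The argument is entirely formal, and there is no real obstacle; the mildest subtlety is keeping track of the Bass-Serre data (stable letters, choice of representative vertices) when translating the support, especially in the HNN case, but this amounts to unpacking the definitions given in the previous two subsections.
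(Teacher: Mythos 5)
Your proof is correct and follows the same strategy as the paper's: both rewrite $\Conj(g\rho(g^{-1}))\circ\rho$ as $\Conj(g)\circ\rho\circ\Conj(g^{-1})$ and then check the two cases (Dehn twist, vertex automorphism) by tracking how the defining restrictions behave under conjugation by $g$. The paper delegates the case-check to Lemmas \ref{DehnTwistOnBassSerre} and \ref{VertexAMOnBassSerre}, whereas you verify it directly from the definitions, which is equally valid.
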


\begin{proof} 
Denote by $\rho'$ the automorphism $\Conj(g) \circ \rho \circ \Conj(g^{-1}) = Conj(g\rho(g^{-1})) \circ \rho$. 
It is easy to check (for example using Lemmas \ref{DehnTwistOnBassSerre} and \ref{VertexAMOnBassSerre}) 
that if $\rho$ is the Dehn twist about an edge $e$ by $z \in C(\Stab(e))$, then $\rho'$ is the Dehn twist about 
$g \cdot e$ by $gzg^{-1}$, and that if $\rho$ is a vertex automorphism associated to $v$, 
then $\rho'$ is a vertex automorphism associated to $g \cdot v$ which restricts to 
$\Conj(g) \circ \rho|_{G_v} \circ \Conj(g^{-1})$ on $G_{g \cdot v}$.
\end{proof}

Under some conditions on $G$ and $T$, elementary automorphisms commute up to conjugation.
\begin{prop} \label{ElementaryAMCommute} 
Let $T$ be a cyclic $G$-tree whose edge stabilizers have cyclic centralizers.

Let $\rho$ and $\sigma$ be two elementary automorphisms associated to $T$ which have supports lying in distinct orbits. 
Then there exists an element $g$ of $G$ such that
$$ \rho \circ \sigma = \Conj(g) \circ \sigma \circ \rho$$
\end{prop}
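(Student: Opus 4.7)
The plan is to prove the proposition by direct computation in the Bass-Serre presentation, with a case analysis on whether each of $\rho$ and $\sigma$ is a Dehn twist or a vertex automorphism; the three sub-cases (twist-twist, twist-vertex, vertex-vertex) are handled uniformly.

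First, I would choose a Bass-Serre presentation $(T^1, T^0, (t_f)_{f \in E_1})$ for $T$ in which both supports of $\rho$ and $\sigma$ are visible, that is, each is either an edge in $T^1$ or a vertex in $T^0$; this is possible because the supports lie in distinct $G$-orbits. Next, observing that the conclusion $\rho \circ \sigma = \Conj(g) \circ \sigma \circ \rho$ says exactly that the commutator $\rho \sigma \rho^{-1} \sigma^{-1}$ is inner, and hence is insensitive to replacing $\rho$ by $\Conj(h_1) \circ \rho$ or $\sigma$ by $\Conj(h_2) \circ \sigma$, I would apply Remarks \ref{DehnTwistFixingAGivenVertex} and \ref{VertexAMFixingAGivenVertex} to normalise so that $\rho$ restricts to the identity on $G_x$ for every vertex $x$ of $T^0$ lying in the connected component of $T \setminus G \cdot \Supp(\rho)$ that contains $\Supp(\sigma)$, and symmetrically for $\sigma$.

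The heart of the proof is then to apply Lemmas \ref{DehnTwistOnBassSerre} and \ref{VertexAMOnBassSerre} to obtain explicit formulas for the action of $\rho$ and $\sigma$ on a generating set of $G$ consisting of the vertex stabilisers of $T^0$ together with the stable letters $(t_f)_f$, and then to compose. On many vertex stabilisers the normalisation makes at least one of the two automorphisms act trivially, so $\rho \circ \sigma$ and $\sigma \circ \rho$ agree there directly. On the remaining vertex stabilisers and on stable letters, each of $\rho$ and $\sigma$ acts as a conjugation, possibly twisted by an auxiliary element of a centraliser $C(\Stab(f))$ in the vertex-automorphism case, and a direct comparison shows that the two compositions differ by left-conjugation by a single global element $g$.

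The main obstacle is the bookkeeping required to verify that this element $g$ is indeed the same across all generators, particularly in the vertex-vertex case. There, Lemma \ref{VertexAMOnBassSerre} produces an auxiliary factor $z \in C(\Stab(f))$ for every stable letter $t_f$, and composing two vertex automorphisms causes the factors coming from $\rho$ and from $\sigma$ to interact at the same edge $f$; similarly, the conjugating elements $g^\rho_R$ and $g^\sigma_{R'}$ attached to components of $T$ intermingle. The hypothesis that every edge stabiliser has cyclic (and therefore abelian) centraliser is exactly what forces the relevant factors to commute, so that the discrepancy between $\rho \circ \sigma$ and $\sigma \circ \rho$ collapses uniformly to conjugation by a single element $g$. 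The twist-twist and twist-vertex cases are simpler instances of the same computation, with fewer auxiliary factors to track.
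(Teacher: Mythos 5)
Your proposal is correct and takes essentially the same approach as the paper's proof: fix a Bass-Serre presentation containing both supports, apply Lemmas \ref{DehnTwistOnBassSerre} and \ref{VertexAMOnBassSerre} to compare $\rho\circ\sigma$ and $\sigma\circ\rho$ on vertex stabilizers and stable letters, and invoke the cyclic-centralizer hypothesis precisely to control the Dehn-twist case (so that conjugation on an edge group extends uniquely to its centralizer). The only presentational difference is that you first normalize via Remarks \ref{DehnTwistFixingAGivenVertex} and \ref{VertexAMFixingAGivenVertex} so that each automorphism fixes the side facing the other's support, exploiting the fact that the desired conclusion is an identity in $\Out(G)$, whereas the paper carries the two conjugating constants $g_0,h_0$ through the computation and exhibits $g=g_0h_0g_0^{-1}h_0^{-1}$ explicitly.
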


\begin{proof} Choose a Bass-Serre presentation $(T^1, T^0, (t_e)_e)$ for $G$ with respect to $T$. 
By Lemma \ref{ElementaryAMOnTranslatedSupp}, we may assume that both $\rho$ and $\sigma$ have support in $T^1$.

Let $R^0, \ldots , R^m$ denote the connected components of $T^1 \setminus \{\Supp(\rho)\}$, and $S^0, \ldots, S^n$ the 
connected components of $T^1 \setminus \{\Supp(\sigma)\}$, and assume without loss of generality that $\Supp(\sigma)$ 
lies in $R^0$ and $\Supp(\rho)$ lies in $S^0$. Note that $S^1, \ldots, S^n$ are contained in $R^0$, and that $R^1, \ldots, R^m$ are contained in $S^0$.




By Lemmas \ref{DehnTwistOnBassSerre} and \ref{VertexAMOnBassSerre}, there are elements $g_j$ (respectively $h_k$) 
such that $\rho$ (respectively $\sigma$) restricts to conjugation by $g_j$ (respectively $h_k$) on the 
stabilizer $G_u$ of each vertex $u$ which lies in $R^j$ (respectively $S^k$) and is not in the orbit of the support of $\rho$ (respectively $\sigma$). Also, if $f$ is an edge of $T^1$ which lies in $R^j$ (respectively $S^k$), then $\rho$ (respectively $\sigma$) restricts to conjugation by $g_j$ (respectively $h_k$) on the stabilizer $G_f$ of $f$.

Moreover, we claim that $\sigma(g_j)= h_0 g_j h_0^{-1}$ and $\rho(h_k) = g_0 h_k g_0^{-1}$. If $\rho$ is a 
vertex automorphism associated to a vertex $v$, we have by Lemma \ref{VertexAMOnBassSerre} that $g_j$ is an element of 
$G_v$ on which $\sigma$ restricts to conjugation by $h_0$. If $\rho$ is a Dehn twist about 
$e = \Supp(\rho)$ the element $g_j$ is in $C(G_e)$: since $C(G_e)$ is cyclic by hypothesis, and since $\sigma$ restricts to conjugation by $h_0$ on $G_e$, 
it must also send $g_j$ to $h_0 g_j h^{-1}_0$.

Let $u$ be a vertex of $T^0$ which lies in $R^0 \cap S^k$. On $G_u$, we see that $\rho \circ \sigma$ restricts to conjugation by $\rho(h_k)g_0 = g_0 h_k$. Similarly $\sigma \circ \rho$ restricts to conjugation 
by $\sigma(g_0) h_k = h_0 g_0 h_0^{-1} h_k$. Thus $\rho \circ \sigma$ restricts to 
$\Conj (g_0 h_0 g_0^{-1} h_0^{-1}) \circ \sigma \circ \rho$ on $G_u$. The case for $u$ in $R^j \cap S^0$ is symmetric.

If $u$ is the support of one of the two elementary automorphisms, without loss of generality $\sigma$, 
the restriction of $\sigma \circ \rho$ on $G_u$ is $\Conj(\sigma(g_0)) \circ \sigma\mid_{G_u}$, 
while the restriction of $\rho \circ \sigma$ on $G_u$ is $\Conj(g_0) \circ \sigma\mid_{G_u}$. 
Thus for any vertex $u$ of $T^0$, $\rho \circ \sigma$ restricts to $\Conj (g_0 h_0 g_0^{-1} h_0^{-1}) \circ \sigma \circ \rho$ on $G_u$.

Let now $e=uv'$ be an edge of $T^1 \setminus T^0$ such that $v'$ lies in $T^1$ but not in $T^0$, and $v = t_e^{-1} \cdot v'$ is in $T^0$. 
Suppose $u$ lies in $R^0 \cap S^k$ and $v$ in $R^j \cap S^0$. By Lemmas \ref{DehnTwistOnBassSerre} 
and \ref{VertexAMOnBassSerre}, we know that $\rho (t_e) = g_0 z t_e g_j^{-1}$ and $\sigma(t_e) = h_k w t_e h_0^{-1}$ 
for elements $z, w$ of $C(\Stab (f))$. Note that $\rho$ and $\sigma$ restrict on $C(\Stab (f))$ to 
conjugation by $g_0$ and $h_k$ respectively. From this we see that 
$\rho \circ \sigma(t_e)= \Conj (g_0 h_0 g_0^{-1} h_0^{-1}) \circ \sigma \circ \rho (t_e)$. 

The cases remaining (when both $u$ and $v$ lie in $R^0 \cap S^k$, and where one or both of $u$, $v$, coincide with the support of $\rho$ or $\sigma$) are dealt with in a similar way to conclude the proof.
\end{proof}

\subsection{Modular groups}

Let $G$ be a torsion-free hyperbolic group which is freely indecomposable with respect to a subgroup $H$. 
As in \cite{PerinSklinosHomogeneity}, we define the relative modular group $\Mod_H(G)$ as the subgroup of 
$\Aut_H(G)$ generated by Dehn twists about one-edge cyclic splittings of $G$ in which $H$ is elliptic.

Recall that we have the following result \cite[Corollary 4.4]{RipsSelaHypI}:

\begin{thm} \label{} 
Let $G$ be a torsion-free hyperbolic group freely indecomposable with respect to a (possibly trivial) subgroup $H$.
The modular group $\Mod_H(G)$ has finite index in $\Aut_H(G)$.
\end{thm}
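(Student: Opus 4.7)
The plan is to exploit the structure provided by the pointed cyclic JSJ tree $T$ of $G$ with respect to $H$ and to peel off, in several finite-index reductions, the non-modular part of $\Aut_H(G)$.

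First I would use Remark \ref{AcylindricalAndUnivCompatible}: $T$ is a fixed point for the action of $\Aut_H(G)$ on $(G,H)$-trees (via twisting), so every $\phi \in \Aut_H(G)$ induces a simplicial automorphism $f_\phi$ of $T$ commuting with the action in the sense $f_\phi(g \cdot x) = \phi(g)\cdot f_\phi(x)$. Since the quotient graph of groups $\Lambda = T/G$ is finite, the image of $\Aut_H(G)$ in $\Aut(\Lambda)$ is finite, so a finite-index subgroup $\Aut^0_H(G)$ consists of automorphisms that preserve each vertex- and edge-orbit of $T$, and even fix the basepoint $v$. Up to composing with an inner automorphism (which is a trivial Dehn twist, or absorbed by the ambient identification), we may moreover assume $f_\phi$ fixes, within each $G$-orbit, one chosen representative vertex and edge.

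Next, for such a $\phi$, the restriction to each vertex group $G_u$ is an automorphism of $G_u$ that sends each adjacent edge stabilizer to itself. I would analyze this restriction according to the type of $u$:

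\textbf{Rigid vertices.} Here $G_u$ is a torsion-free hyperbolic group relatively rigid with respect to the adjacent (cyclic) edge groups. The key ingredient is the Paulin/Rips--Sela finiteness theorem: the group of automorphisms of $G_u$ preserving each adjacent edge group modulo inner automorphisms is finite. Hence, up to a further finite-index reduction, $\phi|_{G_u}$ coincides with conjugation by some $g_u \in G_u$.

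\textbf{Surface-type (flexible) vertices.} Here $G_u = \pi_1(\Sigma)$ and the adjacent edge groups sit inside maximal boundary subgroups. By the Dehn--Nielsen--Baer theorem, $\phi|_{G_u}$ (once we know it preserves boundary subgroups up to conjugation) is induced by a homeomorphism of $\Sigma$ fixing the boundary components setwise. The pure mapping class group of such a $\Sigma$ is generated, up to finite index, by Dehn twists along essential simple closed curves, each of which gives a one-edge cyclic splitting of $G_u$ (and hence of $G$) in which $H$ is elliptic. Therefore, up to finite index, $\phi|_{G_u}$ is a product of Dehn twists of $G$.

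\textbf{Z-type vertices.} Here $G_u$ is cyclic and $\phi|_{G_u}$ is $\pm \mathrm{id}$; up to passing to an index-two subgroup this is trivial.

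Finally I would assemble these local modifications into a global one. After multiplying $\phi$ by inner automorphisms on rigid vertices and by Dehn twists coming from surface curves, we obtain an automorphism $\phi'$ which, on every vertex group, restricts to a conjugation by an element lying in the centralizer of all adjacent edge stabilizers. Lemmas \ref{DehnTwistOnBassSerre} and \ref{VertexAMOnBassSerre} then let us recognize $\phi'$ as a product of Dehn twists about the edges of $T$ (using Remark \ref{VertexAMDifferByDT} to absorb discrepancies between vertex automorphisms and Dehn twist products). This expresses $\phi$, modulo a fixed finite-index phenomenon, as an element of $\Mod_H(G)$, proving that $[\Aut_H(G) : \Mod_H(G)] < \infty$.

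The main obstacle is the rigid-vertex step: the finiteness of $\Out(G_u)$ relative to adjacent edge groups is not formal; it relies on the Bestvina--Paulin limiting-action technique together with the fact that a rigid vertex admits no further cyclic splitting relative to its incident edge groups, which forces accumulation to a non-trivial action on an $\R$-tree and yields a contradiction via the Rips machine. Every other step, by contrast, is a direct consequence of the JSJ formalism and the commutation lemmas established earlier in the section.
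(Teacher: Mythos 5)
The paper does not prove this statement: it is stated as a recalled fact and cited directly to \cite[Corollary 4.4]{RipsSelaHypI}, so there is no proof in the text against which to compare your argument. What you have written is, in outline, the route that Rips--Sela (and later Sela, Levitt, Guirardel--Levitt) actually take to prove it.

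Your sketch is essentially sound, and you are right that the entire mathematical content lives in the rigid-vertex step: the finiteness of the group of automorphisms of a rigid vertex group, modulo inner automorphisms, preserving the conjugacy classes of the incident edge groups (and of $H$ at the basepoint vertex), is a Bestvina--Paulin / Rips machine argument and is precisely what is proved in \cite{RipsSelaHypI}. You correctly treat it as a black box rather than pretending it is formal. A few secondary points are glossed over faster than they deserve. At the basepoint vertex, the relevant finiteness is for automorphisms preserving incident edge groups \emph{and} the conjugacy class of $H$; rigidity of the basepoint vertex in the relative JSJ is relative to both, so the Paulin argument still applies, and the further constraint of fixing $H$ pointwise costs only conjugation by $C(H)\cap\Stab(v_H)$, which is trivial or cyclic. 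For flexible vertices carried by non-orientable surfaces, Dehn twists do \emph{not} generate the mapping class group; they generate a finite-index subgroup (cross-cap slides are missing), which is a separate finiteness input from Dehn--Lickorish in the orientable case --- you say ``up to finite index'', which covers it, but it is worth knowing this is not the same statement. Finally, your reduction ``up to an inner automorphism, $f_\phi$ fixes a chosen representative in each orbit'' is too quick as stated: a single conjugation pins down one vertex, and the others are then controlled by orbit-preservation and acylindricity of $T$, not by further inner twists. None of these are fatal, and the overall structure of the argument is correct.
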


We now relate the cyclic JSJ decompositions and modular groups. This will enable us to give a ``normal form'' for modular automorphisms. 

First we define a group of automorphisms associated to a $G$-tree.
\begin{defi} Let $G$ be a finitely generated group, let $H$ be a subgroup of $G$, and let 
$T$ be a cyclic $(G, H)$-tree with a distinguished set of orbits of vertices which are of surface type.

The group of elementary automorphisms of $G$ with respect to $T$, $\Aut^T_H(G)$, is the subgroup of 
$\Aut_H(G)$ generated by Dehn twists about edges of $T$, vertex automorphisms associated to surface type vertices, 
and inner automorphisms.
\end{defi}

\begin{lemma}[Normal Form Lemma] \label{NormalFormMod} Let $T$ be a cyclic $(G, H)$-tree whose edge stabilizers have cyclic centralizers. 
Let $(T^1, T^0, (t_f)_{f})$ be a Bass-Serre presentation for $G$ with respect to $T$: 
any element $\theta$ of $\Aut^T_H(G)$ can be written as a product of the form 
$$\Conj(z) \circ \rho_1 \circ \ldots \circ \rho_r $$
where the $\rho_j$ are Dehn twists about distinct edges of $T^1$ or vertex automorphisms associated to distinct surface type vertices of $T^0$. 
Moreover, we can permute the list of supports of the $\rho_j$.

Finally, if $H$ fixes a non surface type vertex $x$ of $T^0$, we can in fact choose the $\rho_j$ to fix $Stab_G(x)$ pointwise, and thus $z$ to lie in the centralizer of $H$.
\end{lemma}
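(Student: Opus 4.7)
I would produce the normal form in three stages: first obtain the general expression up to a single leading inner factor, then justify the permutation of supports, and finally handle the ``fix $\Stab_G(x)$'' refinement.

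\textbf{Existence of the normal form.} By definition, any $\theta \in \Aut^T_H(G)$ is a word in Dehn twists about edges of $T$, vertex automorphisms at surface type vertices, and inner automorphisms. Using Lemma \ref{ElementaryAMOnTranslatedSupp}, I would translate the support of every elementary factor so that it lies in $T^1$ (for Dehn twists) or $T^0$ (for vertex automorphisms), absorbing the correction into a new inner factor. Then, applying the identity $\rho \circ \Conj(g) = \Conj(\rho(g)) \circ \rho$ repeatedly, I push every inner factor to the left; crucially this leaves the elementary factors themselves unchanged. At this point $\theta = \Conj(z) \circ \rho_1 \circ \ldots \circ \rho_N$ with each $\rho_i$ supported in $T^1$ or $T^0$. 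If two of the $\rho_i$ share a support, I invoke Proposition \ref{ElementaryAMCommute} (whose cyclic-centralizer hypothesis is provided) to swap neighbouring factors with supports in distinct orbits, absorbing each new inner factor to the left, until the two merge-candidates are adjacent. I then combine them: Dehn twists about a common edge compose into a Dehn twist about that edge, and two vertex automorphisms associated to a common $v$ compose into another such (the restriction to $G_v$ remains an automorphism of $G_v$, and on every adjacent edge or vertex stabilizer the composition of two conjugations is still a conjugation). Since $N$ strictly decreases at each merge, finitely many iterations yield pairwise distinct supports.

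\textbf{Permutability and the ``furthermore'' statement.} Any reordering of the supports is obtained by adjacent transpositions, each one a single application of Proposition \ref{ElementaryAMCommute} followed by pushing the resulting inner factor to the left. For the $\Stab_G(x)$ refinement, I first note that since $T^0$ is a subtree containing exactly one vertex per orbit, the arc from $v$ to $x$ inside $T^0$ avoids all other translates of $v$; consequently $x$ lies in a component of $T \setminus G\cdot v$ adjacent to $v$, and similarly for any $e \in T^1$, $x$ lies in a well-defined component of $T^0 \setminus \{e\}$. Starting from the normal form $\theta = \Conj(z) \circ \rho_1 \circ \ldots \circ \rho_r$, I apply Remark \ref{DehnTwistFixingAGivenVertex} (respectively Remark \ref{VertexAMFixingAGivenVertex}) to write each $\rho_j = \Conj(h_j) \circ \rho'_j$, where $\rho'_j$ is an elementary automorphism with support still in $T^1$ or $T^0$ (possibly $\bar{e}$ instead of $e$ in the Dehn twist case) that fixes $\Stab_G(x)$ pointwise. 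Pushing the $\Conj(h_j)$ to the left as in Phase 1 leaves the $\rho'_j$ intact, producing $\theta = \Conj(z'') \circ \rho'_1 \circ \ldots \circ \rho'_r$ with each $\rho'_j$ fixing $\Stab_G(x)$ pointwise. Finally, because $\theta$ fixes $H \subseteq \Stab_G(x)$ and so does the product $\rho'_1 \circ \ldots \circ \rho'_r$, the factor $\Conj(z'')$ must fix $H$ pointwise, hence $z''$ lies in the centralizer of $H$.

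\textbf{Main difficulty.} The delicate point is the combinatorial bookkeeping of the many conjugation corrections produced by translations of supports, swaps and $\Stab_G(x)$-fixing modifications; the reason they can all be amalgamated into a single leading inner factor is precisely that pushing an inner automorphism past an elementary one neither alters the elementary factor nor its support, so the elementary ``skeleton'' of the product is preserved throughout the whole argument.
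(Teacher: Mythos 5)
Your proof is correct and follows essentially the same route as the paper's (very terse) argument: Lemma \ref{ElementaryAMOnTranslatedSupp} to normalize supports, Proposition \ref{ElementaryAMCommute} to commute elementary factors (absorbing the resulting conjugations into a leading inner factor), and Remarks \ref{DehnTwistFixingAGivenVertex} and \ref{VertexAMFixingAGivenVertex} for the final refinement. The only point worth flagging is in the merging step: when you apply Proposition \ref{ElementaryAMCommute} to bring two factors with a common support together, you must ensure the intermediate factors all have supports distinct from the one being moved; this is arranged by always picking a closest pair of duplicates, as your argument implicitly does.
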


\begin{proof}  This follows easily from Lemma \ref{ElementaryAMOnTranslatedSupp} and Proposition \ref{ElementaryAMCommute}. 
The last statement follows from Remarks \ref{DehnTwistFixingAGivenVertex} and \ref{VertexAMFixingAGivenVertex}.
\end{proof}

The universal properties of the JSJ imply the following result, which can be seen as a special case of \cite[Theorem 5.4]{GuirardelLevittSplittingsAndAM}.
\begin{prop} \label{EquivalentDefOfMod} 
Let $G$ be a torsion-free hyperbolic group and let $H$ be a subgroup of $G$ with respect to which $G$ is freely indecomposable. 
Let $T$ be the pointed cyclic JSJ tree of $G$ with respect to $H$. Then $\Aut^T_H(G) = \Mod_H(G)$.
\end{prop}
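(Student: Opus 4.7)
The plan is to prove the two inclusions $\Aut^T_H(G) \subseteq \Mod_H(G)$ and $\Mod_H(G) \subseteq \Aut^T_H(G)$ separately, using as the essential inputs the universal compatibility of the pointed cyclic JSJ tree $T$ (Remark~\ref{AcylindricalAndUnivCompatible}) together with the classification of flexible vertices of the JSJ as surface type vertices.

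For the inclusion $\Aut^T_H(G) \subseteq \Mod_H(G)$, I would check that each of the three families of generators belongs to $\Mod_H(G)$. A Dehn twist about an edge $e$ of $T$, after possibly composing with a conjugation so as to stay in $\Aut_H(G)$ (Remark~\ref{DehnTwistFixingAGivenVertex}), is exactly the Dehn twist about the one-edge cyclic splitting of $G$ obtained by collapsing all orbits of edges of $T$ other than that of $e$; since $H$ fixes the basepoint $v$ it remains elliptic in this splitting, so such Dehn twists lie in $\Mod_H(G)$. For a vertex automorphism $\sigma$ associated to a surface type vertex $x$, the restriction $\sigma|_{G_x}$ lies in the mapping class group of $\Sigma_x$ preserving each maximal boundary subgroup, which is generated by Dehn twists about essential simple closed curves $\gamma$. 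Each such $\gamma$ induces a refinement of $T$ with one additional orbit of edges $e_\gamma$ carrying the cyclic stabilizer associated to $\gamma$, and the corresponding Dehn twist about $e_\gamma$ in $G$ lies in $\Mod_H(G)$ by the previous paragraph and restricts on $G_x$ to the surface Dehn twist. By Remark~\ref{VertexAMDifferByDT}, $\sigma$ then differs from a product of such lifted surface Dehn twists by Dehn twists about edges of $T$ and an inner automorphism of $\Aut_H(G)$, all of which lie in $\Mod_H(G)$ (the inner corrections being handled by Lemma~\ref{RelationDehnTwists}, which writes conjugations by powers of a generator of $C_G(H)$ as products of Dehn twists about the orbits of edges of $T$ adjacent to $v$; when $H$ is non-cyclic $C_G(H)$ is trivial and there is nothing to do).

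For the converse inclusion $\Mod_H(G) \subseteq \Aut^T_H(G)$, I would take a generator $\tau$, a Dehn twist about a one-edge cyclic $(G,H)$-tree $T_S$. Universal compatibility produces a common refinement $\hat{T}$ of $T$ and $T_S$, so the unique orbit of edges of $T_S$ is the image of some orbit $G.\hat{e}$ in $\hat{T}$. If the collapse $\hat{T} \to T$ does not contract $\hat{e}$, then $\hat{e}$ is an edge of $T$ and $\tau$ is directly a Dehn twist generator of $\Aut^T_H(G)$. Otherwise $\hat{e}$ collapses to some vertex $x$ of $T$, so its stabilizer is contained in $G_x$ and induces a nontrivial cyclic splitting of $G_x$ relative to incident edge groups (and relative to $H$ when $x=v$). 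Rigid vertices admit no such splitting by universal ellipticity, and cyclically stabilized vertices (Z-type vertices, and the basepoint when $H$ is cyclic) admit none either, so $x$ must be of surface type; then $\hat{e}$ corresponds to an essential simple closed curve $\gamma$ on $\Sigma_x$, and $\tau|_{G_x}$ is the surface Dehn twist about $\gamma$. Extending this surface Dehn twist to a vertex automorphism of $G$ associated to $x$ yields an element of $\Aut^T_H(G)$ that agrees with $\tau$ on $G_x$, and by Remark~\ref{VertexAMDifferByDT} the two automorphisms differ by a product of Dehn twists about edges of $T$ and an inner automorphism, again all generators of $\Aut^T_H(G)$.

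The main obstacle I expect is the surface vertex case of the second inclusion: one must check that a single new orbit of edges in the refinement $\hat{T}$ inside $G_x$ really corresponds to a single essential simple closed curve on $\Sigma_x$ (and not a multicurve or a boundary-parallel curve), and carefully match the globally defined Dehn twist of $G$ about $\hat{e}$ with the lift of the corresponding surface Dehn twist as a vertex automorphism. A secondary delicacy is keeping track of the inner corrections produced by Remark~\ref{VertexAMDifferByDT}: one needs to verify, via a small case analysis at the basepoint $v$ using Lemma~\ref{RelationDehnTwists}, that conjugations by elements of $C_G(H)$ are genuinely expressible as products of Dehn twists about edges of $T$ adjacent to $v$, so that the two generating sets really define the same subgroup of $\Aut_H(G)$.
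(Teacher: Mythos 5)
Your proposal is correct and follows essentially the route the paper takes: Lemma~\ref{ModTreeInMod} for the inclusion $\Aut^T_H(G) \subseteq \Mod_H(G)$ (decompose a surface vertex automorphism into surface Dehn twists, lift each via a refinement to a global Dehn twist, absorb the discrepancy via Remark~\ref{VertexAMDifferByDT}), and universal compatibility of the pointed JSJ tree plus the collapse lemmas for the reverse inclusion. One streamlining worth noting: in the surface-vertex case of the second inclusion, Lemma~\ref{DehnTwistsThroughCollapseDown} already tells you that the Dehn twist $\tau$ about $\hat{e}$ \emph{is} a vertex automorphism associated to $p(\hat{e})$ in $T$ (with the correct restriction to $G_x$), so $\tau$ lies in $\Aut^T_H(G)$ on the spot. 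There is no need to construct an auxiliary vertex automorphism agreeing with $\tau$ on $G_x$ and compare via Remark~\ref{VertexAMDifferByDT}; in fact that comparison step quietly presupposes what Lemma~\ref{DehnTwistsThroughCollapseDown} provides, since Remark~\ref{VertexAMDifferByDT} only applies once you already know both maps are vertex automorphisms associated to $x$, which requires controlling the action of $\tau$ on the vertex groups adjacent to $x$, not just on $G_x$ itself. Your worries about multicurves and boundary-parallel curves are likewise dissolved by that lemma: the restriction is a Dehn twist about a single edge of the minimal subtree $\hat{T}_v$ of $G_v$, hence about a single essential curve.
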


To prove it, we will use the following lemmas, which relate elementary automorphisms associated to $G$-trees $\hat{T}$ and $T$ when 
$\hat{T}$ is a refinement of $T$. The proof of the first of these results is immediate.

\begin{lemma} \label{DehnTwistThroughCollapseUp} Let $\hat{T}$ and $T$ be two $G$-trees and suppose $p: \hat{T} \to T$ is a collapse map. 
Let $\tau$ be a Dehn twist by an element $a$ about an edge $e$ of $T$. 

Then $\tau$ is the Dehn twist by $a$ about the unique edge $\hat{e}$ such that $p(\hat{e})=e$.
\end{lemma}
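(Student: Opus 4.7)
The plan is to show that the two Dehn twists are defined via the same one-edge splitting of $G$, hence must coincide as automorphisms.

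First, I would observe that since $p \colon \hat{T} \to T$ is a collapse map, it consists in collapsing some orbits of edges to points. Thus the preimage of each edge of $T$ is a single edge, which gives the uniqueness of $\hat{e}$ with $p(\hat{e}) = e$. Moreover, $p$ is $G$-equivariant and restricts to a $G$-equivariant bijection between the non-collapsed edges of $\hat{T}$ and the edges of $T$, so in particular $\Stab_G(\hat{e}) = \Stab_G(e)$ and the Dehn twist by $a$ about $\hat{e}$ is well-defined.

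Next, I would compare the two one-edge splittings used to define the Dehn twists. Let $q \colon T \to T_e$ be the collapse map onto the one-edge tree obtained by collapsing all edges of $T$ not in the orbit of $e$, and let $\hat{q} \colon \hat{T} \to \hat{T}_{\hat{e}}$ be the corresponding collapse map for $\hat{T}$. The composition $q \circ p \colon \hat{T} \to T_e$ is equivariant and collapses exactly the edges of $\hat{T}$ that are either collapsed by $p$ or whose $p$-image is collapsed by $q$; by the bijection above, this is exactly the set of edges of $\hat{T}$ not in the orbit of $\hat{e}$. Hence $q \circ p$ and $\hat{q}$ have the same non-trivial fibers, so under the canonical identification $T_e \cong \hat{T}_{\hat{e}}$ we have $q \circ p = \hat{q}$.

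Consequently, the induced one-edge splittings $G = U \ast_{\Stab(e)} V$ (or HNN extension $U \ast_{\Stab(e)}$) obtained from $T$ and from $\hat{T}$ coincide. Since the Dehn twist is defined purely in terms of this one-edge splitting and the element $a \in C(\Stab(e)) = C(\Stab(\hat{e}))$, the two automorphisms agree. There is no real obstacle here; the proof is essentially bookkeeping about how collapse maps compose, and the only point to be careful about is verifying that $p$ induces a $G$-equivariant bijection on non-collapsed edges so that the stabilizers and the Bass--Serre splittings match on the nose.
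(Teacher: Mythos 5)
Your proof is correct and fills in exactly the bookkeeping the paper deems immediate (the paper gives no proof, remarking only that "the proof of the first of these results is immediate"). Showing that $q \circ p$ and $\hat{q}$ have the same fibers, hence that the two one-edge splittings coincide on the nose, is the natural argument, and you handle the relevant point (uniqueness of $\hat{e}$ and the $G$-equivariant bijection on non-collapsed edges, which ensures $\Stab(\hat{e})=\Stab(e)$) correctly.
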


\begin{lemma} \label{DehnTwistsThroughCollapseDown} Let $\hat{T}$ and $T$ be two $G$-trees and suppose $p: \hat{T} \to T$ is a collapse map. 
Let $\hat{\tau}$ be a Dehn twist by an element $a$ about an edge $\hat{e}$ of $\hat{T}$. 

If $p(\hat{e})$ is an edge, $\hat{\tau}$ is a Dehn twist by $a$ about $p(\hat{e})$.
If $p(\hat{e})$ is a vertex $v$, and if  $a$ is in $\Stab(\hat{e})$, then $\hat{\tau}$ is a vertex automorphism associated to $v$. 
Its restriction to $G_v$ is the Dehn twist by $a$ about the edge $\hat{e}$ of the minimal subtree $\hat{T}_v$ of $\Stab(v)$ in $\hat{T}$.
\end{lemma}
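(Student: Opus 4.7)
The plan is to handle the two cases of the lemma in turn. For Case 1, in which $p(\hat e)$ is an edge $e$, the argument is essentially formal: collapse maps compose associatively, so since $\hat e$ is not among the edges collapsed by $p$, collapsing $\hat T$ outside the orbit of $\hat e$ coincides with first applying $p$ and then collapsing $T$ outside the orbit of $e$. The two one-edge $G$-trees (and hence the one-edge splittings) used to define the two Dehn twists are thus the same, with the same edge group $\Stab(\hat e) = \Stab(e)$, so twisting by the same element $a$ yields the same automorphism.

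For Case 2, where $p(\hat e) = v$ is a vertex and $a \in \Stab(\hat e)$, I would denote by $\hat T_1$ the one-edge collapse of $\hat T$ keeping only the orbit $G \cdot \hat e$, yielding a splitting $G = U *_C V$ (or $U *_C$ in the HNN case) with $C = \Stab(\hat e)$, so that by definition $\hat\tau$ is the identity on $U$ and conjugation by $a$ on $V$. The key geometric observation is the equivariance identity $p(g \cdot \hat e) = g \cdot v$: for any vertex $w$ of $T$ with $w \notin G \cdot v$, the preimage $p^{-1}(w)$ contains no translate of $\hat e$, hence lies in a single connected component $R_w$ of $\hat T \setminus G \cdot \hat e$. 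Thus $\Stab_G(w) \le \Stab(R_w)$, which is $U$ or $V$ (up to conjugation by a stable letter in the HNN case), and $\hat\tau$ restricts to conjugation by $1$ or $a$ on $\Stab_G(w)$. The same reasoning applied to the lift $\hat f$ of any edge $f$ of $T$ adjacent to $v$ (here $\hat f \notin G \cdot \hat e$ because $p(\hat f) = f$ is an edge) gives conjugation on $\Stab(f)$ by the same element used for the adjacent vertex group. This establishes the two conjugation clauses of the vertex-automorphism definition.

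It remains to show that $\hat\tau(\Stab(v)) = \Stab(v)$ and to identify $\hat\tau|_{\Stab(v)}$ with the Dehn twist by $a$ about $\hat e$ in $\hat T_v$. I would do this by restricting the whole picture to $\Stab(v)$: the minimal $\Stab(v)$-invariant subtree of $\hat T_1$ is precisely the image of $\hat T_v$ after collapsing everything outside the $\Stab(v)$-orbit of $\hat e$, so the two ``sides'' of the induced sub-splitting of $\Stab(v)$ are $U \cap \Stab(v)$ and $V \cap \Stab(v)$. Since $a \in C \le \Stab(v)$ centralizes $C$ (as $C$ is cyclic), the Dehn twist by $a$ about $\hat e$ in $\hat T_v$ is well defined and acts as the identity on $U \cap \Stab(v)$ and conjugation by $a$ on $V \cap \Stab(v)$, which is precisely $\hat\tau|_{\Stab(v)}$. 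This simultaneously yields the invariance of $\Stab(v)$ and the claimed identification. The main obstacle is this last step --- matching the sides of the $G$-splitting with those of the $\Stab(v)$-sub-splitting, and in the HNN case choosing a common stable letter. It is most cleanly handled by selecting a Bass--Serre presentation for $\hat T$ whose restriction to $\hat T_v$ is itself a Bass--Serre presentation for the $\Stab(v)$-action, after which Lemmas \ref{DehnTwistOnBassSerre} and \ref{DisjointDehnTwistFixes} reduce the verification to a direct computation on generators and stable letters.
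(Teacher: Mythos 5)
Your proposal follows essentially the same strategy as the paper's proof: Case 1 by equality of the induced one-edge splittings, Case 2 by first handling everything outside $p^{-1}(v)$ via the observation that it meets no translate of $\hat e$, and then restricting attention to $\Stab(v)$ acting on $\hat T_v$, finalizing via a compatible choice of Bass--Serre presentations. The one point you gloss over, and which the paper makes explicit and which is genuinely needed, is that two edges of $\hat T_v$ lying in the same $G$-orbit already lie in the same $\Stab(v)$-orbit (this follows because $\hat T_v \subseteq p^{-1}(v)$, so translates of $\hat T_v$ by elements of $G \setminus \Stab(v)$ are disjoint from $\hat T_v$). Without this, it is not clear that the $\Stab(v)$-action on (the image of) $\hat T_v$ in the one-edge collapse $\hat T_1$ has a \emph{single} orbit of edges, and hence that the ``induced sub-splitting'' you invoke really is a one-edge splitting with sides $U \cap \Stab(v)$ and $V \cap \Stab(v)$; you should state and justify this before identifying $\hat\tau|_{\Stab(v)}$ with a Dehn twist of $\Stab(v)$. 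With that detail supplied, the argument is sound and matches the paper's.
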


\begin{proof}  
If $p(\hat{e})$ is an edge $e$, it is easy to see that the one edge splitting induced by 
$\hat{e}$ and by $p(\hat{e})$ are the same, which proves the claim. Suppose thus that $p(\hat{e})$ is a vertex $v$. 

We choose a Bass-Serre presentation $(\hat{T}^1, \hat{T}^0, (t_f)_{f})$ for $G$ with respect to $\hat{T}$ such that 
\begin{itemize}
 \item $\hat{e}$ is in $\hat{T^1}$;
 \item if  $T^i = p(\hat{T}^i)$ for $i =0,1$, the triple $(T^1, T^0, (t_f)_{f})$ is a Bass-Serre presentation for $G$ with respect to $T$
\end{itemize}
(this can be done by taking for $T^0$ the lift of a maximal subtree of $\hat{T}/G$ which contains a maximal 
subtree of each of the maximal subgraphs collapsed under $p$).

The minimal subtree $\hat{T}_v$  of $G_v$ in $\hat{T}$ is contained in the preimage of $v$ by $p$. 
In particular, any translate of $\hat{T}_v$ by an element of $G \setminus G_v$ is disjoint from $\hat{T}_v$, 
so two vertices (respectively two edges) of $\hat{T}_v$ are in the same orbit under $G_v$ if and only 
if they are in the same orbit under the action of $G$. By our choice of Bass-Serre presentation, 
$(\hat{T}^1 \cap \hat{T}_v, \hat{T}^0  \cap \hat{T}_v, (t_f)_{f \in E((\hat{T}^{1} \setminus \hat{T}^0)  \cap \hat{T}_v)})$ 
is a Bass-Serre presentation for $G_v$ with respect to $\hat{T}_v$. 

From this it is easy to check that the restriction of $\hat{\tau}$ to $G_v$ is the 
Dehn twist by $a$ about $\hat{e}$ with respect to the action of $G_v$ on $\hat{T}_v$. 

Consider now the map $\pi: T \to \pi(T)$ which collapses all the orbit of edges of $T$ which are not adjacent to $v$. 
Let $f = \pi(v) w$ be an edge adjacent to $\pi(v)$: there is a unique edge $\hat{f}$ of $T$ such that $\pi \circ p (\hat{f}) = f$. 
Then $\hat{\tau}$ restricts on $\Stab(\hat{f})$ to a conjugation by an element which is either $1$ or $a$: both fix $\pi(v)$. 
If $w$ is not in the orbit of $\pi(v)$, the subtree $\pi^{-1}(w)$ of  
$T$ is stabilized by $G_w$ and does not meet any translates of $e$. 
By Lemma \ref{DisjointDehnTwistFixes}, $\hat{\tau}$ restricts on $G_w$ to a conjugation 
by $1$ or by $a$, which both fix $\pi(v)$. Thus $\hat{\tau}$ is a vertex automorphism with respect to $\pi(v)$. 
\end{proof}

We show:

\begin{lemma} \label{ModTreeInMod} 
Let $G$ be a torsion-free hyperbolic group which is freely 
indecomposable with respect to a subgroup $H$. Suppose $T$ is a cyclic $(G,H)$-tree with a 
distinguished set of orbits of vertices which are of surface type. 
Then $\Aut^T_H(G)$ is a subgroup of $\Mod_H(G)$.
\end{lemma}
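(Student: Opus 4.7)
The plan is to verify that each of the three families of generators of $\Aut^T_H(G)$ --- Dehn twists about edges of $T$, vertex automorphisms associated to surface type vertices, and inner automorphisms lying in $\Aut_H(G)$ --- belongs to $\Mod_H(G)$.

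The first family is immediate. For a Dehn twist $\tau$ by $a \in C(\Stab(e))$ about an edge $e$ of $T$, collapsing every orbit of edges of $T$ other than $G \cdot e$ yields a one-edge cyclic splitting of $G$ in which $H$ remains elliptic (since $H$ was elliptic in $T$). By Lemma \ref{DehnTwistThroughCollapseUp}, $\tau$ is the Dehn twist by $a$ about this one-edge splitting, so $\tau \in \Mod_H(G)$ directly from the definition.

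The surface-vertex step is the heart of the argument. Let $\sigma$ be a vertex automorphism associated to a surface type vertex $v$ with $G_v \cong \pi_1(\Sigma)$; the idea is to decompose the restriction $\sigma_0 := \sigma|_{G_v}$, which conjugates each boundary subgroup, into geometric pieces via the mapping class group of $\Sigma$. By Dehn--Nielsen--Baer together with the generation of $\mathrm{MCG}(\Sigma)$ relative to boundary by Dehn twists about simple closed curves (with crosscap slides in the non-orientable case, realized similarly through refinements of $T$), we write $\sigma_0 = \Conj(h) \circ \tau_{c_1} \circ \cdots \circ \tau_{c_k}$ for some $h \in G_v$ and simple closed curves $c_1, \ldots, c_k$ on $\Sigma$. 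Each curve $c_i$ splits $\pi_1(\Sigma)$ over $\langle c_i \rangle$ and hence induces a refinement $\hat{T}_i$ of $T$ which is still a cyclic $(G,H)$-tree, with one new orbit of edges of stabilizer $\langle c_i \rangle$. By Lemma \ref{DehnTwistsThroughCollapseDown}, the Dehn twist by $c_i$ about this new edge in $\hat{T}_i$ is a vertex automorphism of $T$ associated to $v$ whose restriction to $G_v$ is $\tau_{c_i}$; by the first step applied to $\hat{T}_i$ it lies in $\Mod_H(G)$. Their composition yields a vertex automorphism $\tilde{\sigma} \in \Mod_H(G)$ associated to $v$ with $\tilde{\sigma}|_{G_v} = \Conj(h)^{-1} \circ \sigma_0$. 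Since both $\sigma$ and $\tilde{\sigma}$ lie in $\Aut_H(G)$, one deduces $h \in C(H)$, so $\Conj_G(h) \in \Aut_H(G)$ is an inner automorphism handled by the third step. Then $\sigma$ and $\Conj_G(h) \circ \tilde{\sigma}$ are vertex automorphisms associated to $v$ with identical restriction to $G_v$, so by Remark \ref{VertexAMDifferByDT} they differ by a product of Dehn twists about edges of $T$ adjacent to $v$, which are in $\Mod_H(G)$ by the first step. Hence $\sigma \in \Mod_H(G)$.

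Finally, an inner automorphism $\Conj(g) \in \Aut_H(G)$ satisfies $g \in C(H)$. Under the standard convention that $\Mod_H(G)$ includes inner automorphisms in $\Aut_H(G)$ (needed for the finite-index result recalled above, since rigid cases would otherwise force $\Aut_H(G)$ to be finite), this step is trivial. Alternatively, one realizes such inner automorphisms as explicit products of Dehn twists via Lemma \ref{RelationDehnTwists}: applied at the vertex $u$ of the pointed cyclic JSJ tree of $G$ relative to $H$ whose stabilizer is $C(H)$, with $r \geq 2$ orbits of adjacent edges, the relation produces $\Conj(z^{r-1})$ as a product of $r$ Dehn twists, each corresponding to a one-edge cyclic splitting in which $H$ remains elliptic (as $H$ fixes the basepoint $v$ of the pointed JSJ). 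The main obstacle is the surface step: tracking the inner-automorphism correction from Dehn--Nielsen--Baer carefully enough to feed it into the third step, and realizing the non-orientable generators (crosscap slides) by appropriate refinements of $T$.
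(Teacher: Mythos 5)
Your proof follows the same route as the paper's: collapse $T$ to a one-edge splitting for the Dehn-twist generators, and for a surface-type vertex decompose the restriction of $\sigma$ to $G_v$ into mapping-class-group generators, refine $T$ by the corresponding curves, and invoke Lemmas \ref{DehnTwistThroughCollapseUp}, \ref{DehnTwistsThroughCollapseDown} and Remark \ref{VertexAMDifferByDT}. You are more scrupulous than the paper on two points it glosses over: the paper simply asserts that ``the group of automorphisms of the fundamental group of a surface is generated by Dehn twists'' (silent on the non-orientable case and crosscap slides) and says nothing about the inner-automorphism generator of $\Aut^T_H(G)$. On that last point, though, your alternative realization via Lemma \ref{RelationDehnTwists} does not work as stated: when $H$ is cyclic, the vertex of the pointed JSJ tree whose stabilizer is $C(H)$ is the added basepoint, which has a single orbit of adjacent edges by construction, so $r=1$ and the relation $\tau_1\cdots\tau_r=\Conj(z^{r-1})$ is vacuous; the correct reading is that $\Mod_H(G)$ contains $\mathrm{Inn}(G)\cap\Aut_H(G)$ by convention (this is required anyway for the finite-index statement quoted from Rips--Sela), which is what the paper tacitly assumes.
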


\begin{proof} 
Suppose $\tau$ is a Dehn twist about an edge $e$ of $T$ which fixes $H$ pointwise. 
By definition, $\tau$ is the Dehn twist by $a$ associated to the one edge splitting obtained from $T$ 
by collapsing all the edges which are not in the orbit of $e$, thus it lies in $\Mod_H(G)$.

Now let $\sigma$ be a vertex type automorphism associated to a surface type vertex $v$ of $T$, with corresponding surface $\Sigma$. 
It is a classical result that the group of automorphisms of the fundamental group of a surface is generated by 
Dehn twists $\delta_c$ by elements $c$ corresponding to simple closed curves $\gamma$ on the surface. 
Thus it is enough to show the result for a vertex type automorphism $\sigma$ which restrict to a Dehn twist $\delta_{c}$ on $G_v$.

Denote by $T^+$ the refinement of $T$ obtained by refining $v$ by the $G_v$-tree dual to the curve $\gamma$ on $\Sigma$, 
by $e^+$ the edge of $T^+$ stabilized by $c$. By Lemma \ref{DehnTwistsThroughCollapseDown}, the Dehn twist $\tau^+$ 
by $c$ about $e^+$ is an elementary automorphism associated to $T$ with support $v$, whose restriction to $G_v$ 
is exactly $\delta_c$. By Remark \ref{VertexAMDifferByDT}, $\sigma$ and $\tau^+$ differ by a product of Dehn twists about edges of $T$. 
These Dehn twists as well as $\tau^+$ are all elements of $\Mod_H(G)$ by the first part of the proof, thus so is $\sigma$.
\end{proof}

The proof of Proposition \ref{EquivalentDefOfMod} is now straightforward.
\begin{proof} 
By Lemma \ref{ModTreeInMod}, we have the inclusion $\Aut^T_H(G) \leq \Mod_H(G)$.

Conversely, let $T'$ be a cyclic $G$-tree with a unique orbit of edges in which $H$ is elliptic. 
Let $a$ be an element in the centralizer of the stabilizer of some edge $e$, and denote by $\tau$ the Dehn twist about $e$ by $a$. 

As noted in Remark \ref{AcylindricalAndUnivCompatible}, $T$ is universally compatible, so it admits a refinement $\hat{T}$ which collapses onto
$T'$ via some map $p:\hat{T} \to T'$. By Lemma \ref{DehnTwistThroughCollapseUp}, 
$\tau$ is a Dehn twist about an edge $\hat{e}$ of $\hat{T}$. Note that if $p(\hat{e})$ is a vertex, then it must be a surface type vertex so in particular the stabilizer of $\hat{e}$ is maximal cyclic. By Lemma \ref{DehnTwistsThroughCollapseDown},
this Dehn twist is an elementary automorphism associated to $T$. Hence $\tau$ is in $\Aut_H^T(G)$.
\end{proof}

We also observe:

\begin{lemma}\label{ElemMOD}
Let $A\leq H$ be subgroups of a torsion-free hyperbolic group $G$ which is freely indecomposable with respect to $A$. Let $T$ be a cyclic $(G,A)$-tree with a distinguished set of orbits of vertices which are of surface type. Suppose $\rho$ is an elementary 
automorphism associated to $T$ whose support does not lie in any translate of the minimal subtree $T_H$ of $H$ in $T$. Then there exists $g\in G$ and $\sigma\in Mod_H(G)$ such that $\rho=Conj(g)\circ\sigma$. 
\end{lemma}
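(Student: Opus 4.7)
The plan is to treat separately the case where $\rho$ is a Dehn twist and the case where $\rho$ is a vertex automorphism associated to a distinguished surface-type vertex, reducing the second to the first. In both cases, the strategy is to write $\rho$ as a composition of elementary automorphisms $\rho_1, \ldots, \rho_k$ each of the form $\Conj(h_i) \circ \sigma_i$ with $\sigma_i \in \Mod_H(G)$; iterated use of the identity $\sigma \circ \Conj(h) = \Conj(\sigma(h)) \circ \sigma$ together with the fact that $\Mod_H(G)$ is a subgroup then yields $\rho = \Conj(g) \circ \sigma$ for some $g \in G$ and some $\sigma \in \Mod_H(G)$.

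For the Dehn twist case, suppose $\rho$ is the Dehn twist by $a$ about an edge $e$ whose $G$-orbit is disjoint from $T_H$. Being connected and disjoint from $G \cdot e$, the subtree $T_H$ lies inside a single connected component of $T \setminus G \cdot e$, so collapsing all edges of $T$ not in $G \cdot e$ yields a one-edge cyclic splitting of $G$ in which $H$ is elliptic. By Lemma \ref{DisjointDehnTwistFixes}, $\rho$ restricts on $H$ to a conjugation. Choosing in Remark \ref{DehnTwistFixingAGivenVertex} the component $R$ to meet $T_H$, we obtain $g \in G$ such that $\Conj(g^{-1}) \circ \rho$ is a Dehn twist about $e$ or $\bar{e}$ which fixes $H$ pointwise; such a Dehn twist is by construction a generator of $\Mod_H(G)$.

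For the vertex automorphism case, suppose $\rho$ is associated to a surface-type vertex $v$ which is not in any translate of $T_H$. The plan is to reduce to the Dehn twist case following Lemma \ref{ModTreeInMod}: write $\rho|_{G_v}$ as a product of Dehn twists $\delta_{c_1} \cdots \delta_{c_k}$ along simple closed curves on the surface $\Sigma_v$, refine $T$ by cutting $\Sigma_v$ along these curves to obtain a $G$-tree $T^+$ collapsing onto $T$, and use Lemma \ref{DehnTwistsThroughCollapseDown} together with Remark \ref{VertexAMDifferByDT} to express $\rho$ as a product of Dehn twists $\tau_i^+$ about edges $e_i^+$ of $T^+$ contained in the refinement bubble at $v$, together with correction Dehn twists about edges of $T$ adjacent to $v$. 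Each factor must be checked against the hypothesis of the Dehn twist case: an edge of $T$ adjacent to $v$ has $v$ as an endpoint, so any translate meeting $T_H$ would force a translate of $v$ into $T_H$, contradicting the hypothesis; and the edges $e_i^+$ lie in refinement bubbles over $G \cdot v$, while the minimal subtree $T_H^+$ of $H$ in $T^+$ does not enter these bubbles, because the axes of hyperbolic elements of $H$ in $T^+$ coincide with their axes in $T$ (being contained in $T_H$, which is disjoint from $G \cdot v$). The Dehn twist case then applies to each factor, and the composition argument of the first paragraph concludes.

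The main obstacle is the geometric verification that $T_H^+$ stays out of the refinement bubbles over $G \cdot v$, which requires identifying $T_H^+$ with the union of axes of hyperbolic elements (modulo the usual convention for elliptic actions, where $T_H$ already reduces to a single fixed point). A secondary technical point is choosing, in the application of Remark \ref{DehnTwistFixingAGivenVertex}, the correct orientation of the edge so that the resulting Dehn twist genuinely fixes $H$ pointwise rather than a conjugate.
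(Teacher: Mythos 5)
Your Dehn twist case is essentially the paper's argument in miniature, but your handling of the vertex automorphism case takes a genuinely different and much longer route. The paper's proof is a three-line reduction: collapse every $G$-translate of $T_H$ in $T$ to a point, obtaining a cyclic $(G,H)$-tree $T'$ with a distinguished set of surface-type vertices inherited from $T$. Because $\Supp(\rho)$ avoids every translate of $T_H$, it survives the collapse and $\rho$ remains an elementary automorphism of $T'$. Then Remarks \ref{DehnTwistFixingAGivenVertex} and \ref{VertexAMFixingAGivenVertex} apply directly to $T'$ (with $v_H$, the image of $T_H$, placed in the fundamental domain): one can post-compose $\rho$ with a conjugation so that the result fixes $\Stab(v_H) \supseteq H$ pointwise, and Lemma \ref{ModTreeInMod} applied to the $(G,H)$-tree $T'$ finishes the job. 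This treats Dehn twists and surface vertex automorphisms in a completely uniform way, with no need to decompose surface mapping classes into twists or to pass to refinements $T^+$.

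Your route instead re-proves a chunk of Lemma \ref{ModTreeInMod} inside the argument: writing the surface piece of $\rho$ as a product of curve twists, refining $T$ curve by curve, pushing each refinement twist through Lemma \ref{DehnTwistsThroughCollapseDown} and Remark \ref{VertexAMDifferByDT}, and checking case by case that the supports of all the pieces stay clear of translates of the relevant minimal subtree. Each of these checks does go through — the observation that an edge of $T$ adjacent to $v$ cannot lie in a translate of $T_H$ without forcing a translate of $v$ into $T_H$ is correct, and your argument that $T_H^+$ avoids the refinement bubbles (via axes, or more simply via the fact that $p^{-1}(T_H)$ is an $H$-invariant subtree of $T^+$ disjoint from the bubbles, hence contains $T_H^+$) is also correct. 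But this is a lot of machinery for what the paper dispatches by one collapse. A minor imprecision worth flagging in your Dehn twist paragraph: the component $R$ in Remark \ref{DehnTwistFixingAGivenVertex} lives in $T^0 \setminus \{e\}$, and $T_H$ need not meet $T^0$, so ``choosing $R$ to meet $T_H$'' is not literally available; the cleaner phrasing is that $H$ fixes a vertex of the one-edge collapse $T'$, which can be placed in $T'^0$ on a chosen side of $e'$, and the remark then applies to $T'$. The paper's collapse of $T_H$ achieves exactly this normalization once and for all for both elementary automorphism types.
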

\begin{proof}
Consider the tree $T'$ obtained from $T$ by collapsing each subtree in the orbit of $T_H$. It is a $(G,H)$-tree 
with a distinguished set of orbits of vertices which are of surface type (inherited from $T$) 
and $\rho$ is an elementary automorphism associated to $T'$. By Remarks 
\ref{DehnTwistFixingAGivenVertex} and \ref{VertexAMFixingAGivenVertex} we have that 
there is $g\in G$ and an elementary automorphism $\sigma$ associated to $T'$ such that 
$\rho=Conj(g)\circ\sigma$ and $\sigma$ fixes $H$ pointwise. 
By Lemma \ref{ModTreeInMod}, $\sigma\in\Mod_H(G)$. 
\end{proof}

The following result can be seen as a generalization of Lemma \ref{NormalFormMod}.

\begin{prop} \label{NormalFormOfAMFixingH} 
Let $G$ be a torsion-free hyperbolic group, freely indecomposable with respect to a subgroup $A$. 
Let $T$ be the pointed JSJ tree of $G$ with respect to $A$. 
Let $H$ be a finitely generated subgroup of $G$ containing $A$, and denote by $T_{H}$ the minimal subtree of $H$ in $T$.

If $\theta$ is an element of $\Mod_H(G)$, it can be written as 
$$ \theta = \Conj(z) \circ \tau_{e_1} \circ \ldots \circ \tau_{e_p} \circ \sigma_{v_1} \circ \ldots \circ \sigma_{v_q} $$
where each $\tau_{e_i}$ is a Dehn twist about an edge $e_i$ of $T$ and each  $\sigma_{v_j}$ is a 
vertex automorphism $\sigma_{v_j}$ associated with a flexible vertex $v_j$ of $T$ such that:
\begin{itemize}
 \item the edge $e_i$ does not lie in any translate of $T_H$ for any $i$,
 \item if $v_j$ lies in some translate of $T_H$ then the restriction of $\sigma_{v_j}$ 
 to the corresponding surface group fixes an element representing a non-boundary parallel simple closed curve.
\end{itemize}

\end{prop}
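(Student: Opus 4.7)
Since $A \leq H$, every one-edge cyclic splitting of $G$ in which $H$ is elliptic is also one in which $A$ is elliptic, so $\Mod_H(G) \leq \Mod_A(G) = \Aut_A^T(G)$ by Proposition \ref{EquivalentDefOfMod}. Moreover, edge stabilizers of $T$ are cyclic, and their centralizers in the torsion-free hyperbolic group $G$ are cyclic as well, so Proposition \ref{ElementaryAMCommute} applies to elementary automorphisms associated to $T$. My strategy is to first show that each Dehn twist generator of $\Mod_H(G)$ can already be put in the claimed form, and then to assemble an arbitrary product via the techniques underlying Lemma \ref{NormalFormMod}.

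Let $\tau$ be a Dehn twist about the edge $e'$ of a one-edge $(G,H)$-tree $T'$. By universal compatibility of $T$ (Remark \ref{AcylindricalAndUnivCompatible}), $T$ and $T'$ admit a common refinement $\hat{T}$, in which $\tau$ is a Dehn twist about an edge $\hat e$ covering $e'$. Let $p:\hat T \to T$ be the collapse map. If $p(\hat e)$ is an edge $e$ of $T$, then by Lemma \ref{DehnTwistsThroughCollapseDown} $\tau$ is the Dehn twist about $e$; and since $H$ is elliptic in $T'$ the minimal subtree $T_H^{\hat T}$ of $H$ in $\hat T$ maps to a point of $T'$, hence contains no edge in the orbit of $\hat e$, which pushes down via $p$ to show that $e$ lies in no translate of $T_H$. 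If instead $p(\hat e)$ is a vertex $v$ of $T$, then $v$ must be of surface type, since the other types of vertices in $T$ admit no non-trivial cyclic refinement; by Lemma \ref{DehnTwistsThroughCollapseDown}, $\tau$ is then a vertex automorphism associated to $v$ whose restriction to $G_v$ is a Dehn twist about a simple closed curve $\gamma$ on the associated surface, which must be non-boundary-parallel (otherwise we reduce to the previous case). A surface Dehn twist fixes the element representing its defining curve, so $\tau|_{G_v}$ fixes an element of $G_v$ corresponding to $\gamma$.

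A general $\theta \in \Mod_H(G)$ is a product of such generators. Following the proof of Lemma \ref{NormalFormMod}, I use Lemma \ref{ElementaryAMOnTranslatedSupp} to bring supports into a chosen fundamental domain and Proposition \ref{ElementaryAMCommute} to commute elementary automorphisms with distinct supports while absorbing accumulated conjugations into the leading $\Conj(z)$; elementary automorphisms sharing a support are then merged, yielding the required normal form (note that no inner Dehn twist can arise, since the generators contribute only outer Dehn twists or vertex automorphisms). \textbf{The main obstacle} is to check that merging several inner vertex automorphisms at a common inner surface vertex $v$ still yields a vertex automorphism whose restriction to $G_v$ fixes an element representing a non-boundary-parallel simple closed curve. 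The resulting restriction is a composition $\phi$ of surface Dehn twists about non-boundary-parallel curves $\gamma_1, \ldots, \gamma_k$, where each $\gamma_i$ has $H_v := H \cap G_v$ entirely on one side (because $H$ was elliptic in the corresponding one-edge splitting of $G$); hence each factor, and so $\phi$ itself, restricts to a conjugation on $H_v$. When $H_v$ is non-cyclic and non-peripheral, Nielsen-Thurston theory then forces $\phi$ to be reducible, preserving some non-boundary-parallel simple closed curve $\gamma'$ up to isotopy, and adjusting $\sigma_v$ by an inner automorphism of $G_v$ (absorbed as before into the leading conjugation) turns the preservation of $\gamma'$ into the fixation of its representing element. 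The corner cases where $H_v$ is cyclic, peripheral, or trivial require a case-by-case argument using the specific geometry of the pointed JSJ and the fact that $v$ lies in $T_H$.
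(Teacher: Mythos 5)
Your analysis of a single Dehn twist generator of $\Mod_H(G)$ --- lifting through the common refinement $\hat T$, applying Lemmas \ref{DehnTwistThroughCollapseUp} and \ref{DehnTwistsThroughCollapseDown}, and checking that the resulting elementary automorphism satisfies both bullet points --- is essentially the paper's argument. The trouble is in the assembly step, and it is self-inflicted. Unlike Lemma \ref{NormalFormMod}, the present proposition does \emph{not} require the vertices $v_1, \ldots, v_q$ (nor the edges $e_1, \ldots, e_p$) to be distinct; there is no merging to be done. Once each generator in a product expressing $\theta$ has been put in the required form, you only need to push every Dehn twist to the left of every vertex automorphism. A Dehn twist and a vertex automorphism always have supports in distinct orbits (one an edge, the other a vertex), so Proposition \ref{ElementaryAMCommute} applies each time, and the conjugations it produces are collected into $\Conj(z)$. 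By Lemma \ref{ElementaryAMOnTranslatedSupp}, conjugating a vertex automorphism that fixes an element representing a non-boundary-parallel curve yields one at a translated surface vertex which still does so, and membership of the support in a translate of $T_H$ is preserved, so both bullet conditions survive the shuffle. This is precisely what the paper's terse reduction \og it is enough to prove the result when $\theta$ is a single Dehn twist\fg is invoking.

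The \og main obstacle\fg{} you describe --- whether a composite of surface Dehn twists at a common inner surface vertex still fixes a non-boundary-parallel curve element --- therefore never arises. It is also the genuine gap in your write-up: your Nielsen--Thurston reducibility argument covers only the case of a non-cyclic, non-peripheral $H_v$, and you explicitly leave the remaining cases open, so the proof as written is incomplete. The fix is simply to drop the merging and note that the statement, unlike Lemma \ref{NormalFormMod}, does not impose distinctness of supports.
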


\begin{proof}  By definition of the modular group, and by Proposition \ref{ElementaryAMCommute}, 
it is enough to prove the result when $\theta = \tau$ is the Dehn twist by some element $a$ about an edge 
$e$ of a cyclic $(G, H)$-tree $T'$ which has a unique orbit of edges. Since $G$ is torsion-free hyperbolic, 
we may further assume that the stabilizer of $e$ is maximal cyclic so that the tree $T'$ is $1$-acylindrical. 

Note that $T'$ is in particular a cyclic $(G,A)$-tree: by universal compatibility of the pointed JSJ tree, 
$T$ admits a refinement $\hat{T}$ which collapses onto $T'$. We thus have collapse maps $p: \hat{T} \to T$ and $p': \hat{T} \to T'$. 
The tree $\hat{T}$ is obtained by refining each surface type vertex $v$ by the minimal subtree 
$T_v$ in $T'$ of its stabilizer $G_v$.

Let $x$ be a vertex of $T'$ stabilized by $H$. Let $\hat{x}$ be a vertex of $\hat{T}$ such that $p'(\hat{x}) = x$. 
Denote by $\hat{T}_H$ the minimal subtree of $H$ in $\hat{T}$: it is covered by translates 
of paths $[\hat{x}, h \cdot \hat{x}]$. We have $p'(h \cdot \hat{x}) = x$ for all $h \in H$, so we see that $p'(\hat{T}_H) = \{x\}$. 

By Lemma \ref{DehnTwistThroughCollapseUp}, $\tau$ is a Dehn twist by $a$ about an edge $\hat{e}$ such that $p'(\hat{e}) = e$. 
Note that $\hat{e}$ does not lie in $\hat{T}_H$ since the images by $p$ of $\hat{T}_H$ is a single vertex. 

By Lemma \ref{DehnTwistsThroughCollapseDown}, if $p(\hat{e})$ is an edge, $\tau$ is a 
Dehn twist about $p(\hat{e})$. Also, if $p(\hat{e})$ is an edge, it lies outside of $p(\hat{T}_H)$ which contains $T_H$, so it lies outside of $T_H$. 

If $p(\hat{e})$ is a vertex, it must be a surface type vertex and $\Stab(e)$ is generated 
by an element corresponding to a simple closed curve on the corresponding surface: in particular 
it is maximal cyclic and $a$ is in $\Stab(e)$. Thus by Lemma \ref{DehnTwistsThroughCollapseDown}, 
$\tau$ is a vertex automorphism associated to the vertex $p(\hat{e})$ of $T$. 
Moreover, the restriction of $\tau$ to $\Stab(v)$ is a Dehn twist about an 
edge of the minimal tree of $\Stab(v)$ in $\hat{T}$ which is dual to a set non boundary parallel simple closed curves on the corresponding surface: this finishes the proof.
\end{proof}

\section{Isolating types} \label{IsolatingSec}

In this section, we show that if $G$ is a torsion-free hyperbolic group which is freely indecomposable with respect to $A$ 
and does not admit a structure of an extended hyperbolic 
tower over $A$, then the orbits of tuples of elements of 
$G$ under $\Aut_A(G)$ are definable over $A$ (equivalently, $G$ is atomic over $A$).

For the notion of an extended hyperbolic floor we refer the reader to \cite{LouderPerinSklinosTowers}. We give the following definition
\begin{defi}
Let $G$ be a torsion-free hyperbolic group and let $A\subset G$. Then  
$G$ is concrete with respect to $A$ if:
\begin{itemize}
 \item[(i)] $G$ is freely indecomposable with respect to $A$;
 \item[(ii)] $G$ does not admit the structure of an extended hyperbolic floor over $A$.
\end{itemize}
\end{defi}

Lemma 5.19 of \cite{PerinElementary} shows that if $G=\F$ is a free group, and if $A$ is not contained in a free factor, then $\F$ is concrete with respect to $A$. 

Here are some further examples where $G$ is concrete with respect to $A$:
\begin{ex}
\begin{itemize}
 \item[(i)] $G$ is the fundamental group of the connected sum of four projective planes, and $A$ is any 
 set of parameters that contain a non cyclic subgroup (see \cite[Lemma 3.12]{LouderPerinSklinosTowers}).
 \item[(ii)] $G:=\langle a,b,c,d | [a,b]=[c,d]\rangle$ is the fundamental group of the connected sum of two tori, and $A$ is any set of parameters that contain a subgroup of the form $\langle a,b,g\rangle$ with 
 $g\not\in\langle a,b\rangle$ (see the proof of \cite[Lemma 6.1]{LouderPerinSklinosTowers}).
\end{itemize}
\end{ex}

The proof of the result below is essentially contained in \cite{PerinSklinosHomogeneity}, but we include it here for reference. 
For the special case of free groups see also \cite[Proposition 5.9]{OuldHoucineHomogeneity}.

\begin{thm} \label{TypesIsolated} Let $G$ be a torsion-free hyperbolic group. Suppose $G$ is concrete with respect 
to $A$. Then for any $\bar{b}\in G$, the orbit of $\bar{b}$ under $\Aut_A(G)$ is definable over $A$, so in particular $tp^{G}(\bar{b}/A)$ is isolated. 
\end{thm}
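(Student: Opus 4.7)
The plan is to produce an existential formula $\phi(\bar{x})$ over $A$ which both defines the $\Aut_A(G)$-orbit of $\bar{b}$ inside $G$ and isolates $\tp^{G}(\bar{b}/A)$. I first reduce to the case where $A$ is finitely generated: by \cite[Proposition 3.7]{PerinSklinosHomogeneity}, pick a finitely generated $A_0 \leq A$ with respect to which $G$ is freely indecomposable and such that $A$ is elliptic in the cyclic JSJ of $G$ relative to $A_0$; since any first-order formula over $A$ uses only finitely many parameters, after absorbing finitely many more elements into $A_0$ we may assume $A = \langle \bar{a} \rangle$ is finitely generated. Fixing a finite presentation $G = \langle \bar{g} \mid R_1, \ldots, R_k \rangle$ (available since $G$ is torsion-free hyperbolic, hence finitely presented) and writing $\bar{b} = v(\bar{g})$ and $\bar{a} = u(\bar{g})$, the candidate is
$$ \phi(\bar{x}) \;:=\; \exists \bar{y}\ \Bigl( \textstyle\bigwedge_{j} R_j(\bar{y}) = 1 \;\wedge\; \bar{x} = v(\bar{y}) \;\wedge\; \bar{a} = u(\bar{y}) \Bigr). $$
A tuple $\bar{c}$ in any model $\mathcal{M}$ of $\mathcal{T}h(G)$ satisfies $\phi$ precisely when some homomorphism $f \colon G \to \mathcal{M}$ fixing $A$ pointwise sends $\bar{b}$ to $\bar{c}$; in particular $\bar{b}$ itself realizes $\phi$ via $f = \id$, so $\phi \in \tp^{G}(\bar{b}/A)$.

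The heart of the argument is the relative co-Hopf claim: under concreteness, every such $f$ is an embedding, and moreover $f(G)$ is elementarily embedded in $\mathcal{M}$. Granting this, for any $\bar{c} \models \phi$ in $\mathcal{M}$ we obtain $\tp^{\mathcal{M}}(\bar{c}/A) = \tp^{f(G)}(\bar{c}/A) = \tp^{G}(\bar{b}/A)$, proving isolation. In the case $\mathcal{M} = G$, the witness $f$ is an automorphism of $G$ fixing $A$, so $\bar{c}$ lies in the $\Aut_A(G)$-orbit of $\bar{b}$; conversely every element of that orbit clearly realizes $\phi$, so $\phi$ defines the orbit in $G$ as well.

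The main obstacle is establishing this relative co-Hopf property. I would prove it by contradiction, adapting the Bestvina--Paulin / shortening machinery from \cite{PerinSklinosHomogeneity}: a non-injective morphism $G \to \mathcal{M}$ fixing $A$, together with Sela's shortening argument applied over $\Mod_A(G)$ to prevent degeneration, yields after rescaling a minimal non-trivial action of $G$ on a real tree in which $A$ is elliptic; the Rips--Sela analysis of this action, combined with the structure theorem for extended hyperbolic floors from \cite{LouderPerinSklinosTowers}, exhibits such a floor structure on $G$ over $A$, contradicting concreteness. The upgrade from injectivity to elementary embedding is folded into the same argument, since under concreteness the retracts realizing any diagram of first-order formulas can be straightened via modular automorphisms to land back inside $G$.
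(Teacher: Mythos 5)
The central step of your argument is the claim that concreteness of $G$ over $A$ forces every homomorphism $f\colon G \to \mathcal{M}$ fixing $A$ pointwise to be an embedding, a kind of relative Hopf property. That is not the relative co-Hopf property (which says that \emph{injective} $A$-endomorphisms are automorphisms, and which the paper does use), and it is false at the generality you need: under mere concreteness, non-injective endomorphisms of $G$ fixing $A$ can exist. Indeed the paper's own proof invokes \cite[Theorem 4.4]{PerinSklinosHomogeneity}, which produces a \emph{finite} list of proper quotients $\eta_j\colon G \to Q_j$ through which every non-injective $A_0$-endomorphism factors after precomposition by an element of $\Mod_A(G)$; this finiteness theorem would be vacuous if no such endomorphisms existed. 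Consequently your purely Diophantine formula defines, in general, a set strictly larger than the $\Aut_A(G)$-orbit of $\bar b$: a realization $\bar c$ merely witnesses \emph{some} homomorphism $G \to G$ fixing $A$ and sending $\bar b$ to $\bar c$, which need not be injective. Your proposed shortening/Bestvina--Paulin contradiction argument can only be pushed to yield the finiteness statement of Theorem 4.4, not the non-existence of non-injective maps; a single non-injective $A$-morphism does not produce an unstable degenerating sequence.

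The formula the paper actually uses is not existential but $\exists\forall$: the existential block posits a tuple $\bar x$ encoding an endomorphism $h$ that fixes $A_0$ and sends $\bar b$ to $\bar z$, and the universal block asserts that no endomorphism $\Lambda$-related to $h$ kills any of the chosen non-trivial elements $u_j \in \Ker(\eta_j)$. It is precisely this universal clause that excludes the finitely many degenerate quotients, forcing $h$ to be injective; only then does the relative co-Hopf property kick in. Your Diophantine variant does work under the \emph{stronger} hypothesis that $A$ is not contained in any proper retract of $G$; the paper makes exactly this observation as a remark at the end of the section, using Groves's theorem that in that case every $A$-endomorphism of $G$ is already an automorphism. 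Finally, the ``elementary embedding'' step you fold in is unnecessary for the isolation claim: once the orbit is defined in $G$ by a formula $\phi$ over $A$, every $\psi \in \tp^{G}(\bar b/A)$ satisfies $G \models \forall\bar x\,(\phi(\bar x) \to \psi(\bar x))$, and since this sentence is part of the elementary diagram of $(G,A)$, $\phi$ isolates the type.
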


\begin{proof} By Lemma 3.7 in \cite{PerinSklinosHomogeneity}, there is a finite subset $A_0$ of $A$ such that $G$ is freely 
indecomposable with respect to $A_0$, $A$ is elliptic in any $G$-tree in which $A_0$ is elliptic, and 
$\Mod_{A_0}(G) = \Mod_{A}(G)$. By Corollary 4.5 in \cite{PerinSklinosHomogeneity}, we can assume moreover 
that any embedding $j: G \to G$ which restricts to the identity on $A_0$ restricts to the identity on $A$.

By Theorem 4.4 in \cite{PerinSklinosHomogeneity}, there is a finite set of quotients $\{\eta_j: G \to Q_j\}_{j=1, \ldots, m}$ 
such that any non injective endomorphism $\theta: G \to G$ which restricts to the identity on $A_0$ factors through one 
of the quotient maps $\eta_j$ after precomposition by an element $\sigma$ of $\Mod_A(G)$. 
For each $j$, choose $u_j$ a non trivial element in $\Ker(\eta_j)$. 

Let $\gamma_1, \ldots , \gamma_r$ be a generating set for $G$. Write each element $a$ of $A_0$, each element $u_j$, and the tuple $\bar{b}$ as a 
word $w_a(\gamma_1, \ldots, \gamma_r)$ (respectively $w_{u_j}(\gamma_1, \ldots, \gamma_r)$, respectively a tuple 
$\bar{w}_{\bar{b}}(\gamma_1, \ldots, \gamma_r)$).

Let $\Lambda$ be a JSJ decomposition of $G$ with respect to $A$. Two endomorphisms $h$ and $h'$ of $\F$ are said to be 
$\Lambda$-related if $h$ and $h'$ coincide up to conjugation on rigid vertex groups of $\Lambda$, and for any flexible 
vertex group $S$ of $\Lambda$, $h(S)$ is non abelian if and only if $h'(S)$ is non abelian. It is easy to see that there is a 
formula  $\Rel(\bar{x}, \bar{y})$ such that for any pair of endomorphisms $h$ and $h'$ of $G$, the morphism 
$h'$ is $\Lambda$-related to $h$ if and only if $G \models \Rel(h(\gamma_1, \ldots, \gamma_r), h'(\gamma_1, \ldots, \gamma_r))$ 
(see Lemma 5.18 in \cite{ThesisPerin}).

Consider now the following formula $\phi(\bar{z}, A_0)$:
\begin{eqnarray*}
 \exists x_1, \ldots, x_r \; && \left\{ \bar{z} = \bar{w}_{\bar{b}}(x_1, \ldots, x_r) \wedge \bigwedge_{a \in A_0} a=w_a(x_1, \ldots, x_r) \right\} \\
&& \wedge \; \forall y_1, \ldots, y_r \left \{ \Rel(\bar{x},\bar{y}) \rightarrow \bigvee_j w_{u_j}(y_1, \ldots, y_r) \neq 1 \right\}.
\end{eqnarray*}

Suppose $G \models \phi(\bar{c}, A_0)$. Then the endomorphism $h: G \to G$ given by $\gamma_j \mapsto x_j$ sends 
$\bar{b}$ to $\bar{c}$ and fixes $A_0$, moreover no endomorphism $h'$ which is $\Lambda$-related to $h$ factors through one of the maps $\eta_i$. 
This implies that $h$ is injective, but by the relative co-Hopf property for torsion-free hyperbolic groups 
(see Corollary 4.2 of \cite{PerinSklinosHomogeneity}), this in turn implies that $h$ is an automorphism fixing $A_0$. 
By our choice of $A_0$, we have in fact $h \in \Aut_A(G)$. 
Thus the set defined by the formula $\phi(\bar{z}, A_0)$ is contained in the orbit of $\bar{b}$ by $\Aut_A(G)$.

To finish the proof is enough to show that $G\models \phi(\bar{b},A_0)$. 

It is obvious that the first part 
of the sentence $\phi(\bar{b})$ is satisfied by $G$ (just take $x_j$ to be $\gamma_j$). If the second part is not satisfied, this 
means that there exists an endomorphism $h': G \to G$ which is $\Lambda$-related to the identity, and which kills 
one of the elements $u_j$. Thus $h'$ restricts to conjugation on the rigid vertex groups of $\Lambda$, 
sends surface type flexible vertex groups on non abelian images, and is non injective: 
it is a non injective preretraction $G \to G$ with respect to $\Lambda$. By Proposition 5.11 in \cite{PerinElementary}, 
this implies that $G$ admits a structure of hyperbolic floor over $A_0$, thus over $A$, a contradiction. 
\end{proof}

We further remark that in the case where the subset $A\subset G$ is not contained in any proper retract of $G$, then 
the isolating formula can be taken to be Diophantine, i.e. $\exists\bar{y}(\Sigma(\bar{x},\bar{y},\bar{a})=1)$. This follows 
easily from a recent result of Groves \cite{GroRet}:

\begin{thm}
Let $G$ be a torsion-free hyperbolic group. Suppose $A$ is not contained in any proper retract of $G$. Then 
any endomorphism of $G$ that fixes $A$ is an automorphism.
\end{thm}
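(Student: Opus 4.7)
My plan is to prove the contrapositive: assuming some endomorphism $h:G\to G$ fixes $A$ pointwise but is not an automorphism, I will construct a proper retract of $G$ containing $A$. The first step is a reduction to the freely indecomposable case. If the Grushko decomposition of $G$ relative to $A$ is non-trivial, say $G=H_1*\cdots*H_k*F$ with $A\subseteq H_1$ and either $k\geq 2$ or $F\neq 1$, then the projection killing the other factors is already a proper retraction onto a subgroup containing $A$, and we are done. So I would assume $G$ is freely indecomposable relative to $A$, and distinguish two subcases according to the injectivity of $h$.

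\emph{Non-injective case.} Here I would reproduce the strategy used at the end of the proof of Theorem \ref{TypesIsolated}. By Theorem 4.4 of \cite{PerinSklinosHomogeneity}, there is $\sigma\in\Mod_A(G)$ such that $h\circ\sigma$ factors through one of finitely many fixed proper quotients of $G$; using the normal form of Proposition \ref{NormalFormOfAMFixingH}, I would further adjust to obtain a non-injective \emph{preretraction} $h':G\to G$, i.e.\ an endomorphism that fixes $A$ pointwise, restricts to conjugation on the stabilizer of each rigid vertex of the pointed cyclic JSJ tree of $G$ relative to $A$, and sends each surface-type vertex group to a non-abelian image. Proposition 5.11 of \cite{PerinElementary} then produces a hyperbolic floor structure on $G$ relative to $A$, which in particular exhibits a proper retract of $G$ containing $A$, contradicting our hypothesis.

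\emph{Injective, non-surjective case.} The iterates $h^n$ are then all injective, fix $A$, and their images form a strictly decreasing chain $h(G)\supsetneq h^2(G)\supsetneq\cdots$ of subgroups containing $A$. I would apply the Bestvina--Paulin rescaling construction to this sequence to produce a non-trivial isometric action of $G$ on an $\R$-tree $T$ in which $A$ fixes a point (since $A$ has bounded displacement under each $h^n$). The Rips machine converts $T$ into a non-trivial cyclic splitting of $G$ relative to $A$, and Sela's shortening argument then yields modular automorphisms $\sigma_n\in\Mod_A(G)$ such that $h^n\circ\sigma_n$ is eventually non-injective, reducing back to the previous case. The main obstacle is precisely this last case: keeping $A$ elliptic throughout the Bestvina--Paulin limit and executing the shortening argument in the relative setting so as to produce an actual non-injective endomorphism fixing $A$. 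One could alternatively short-circuit it by invoking Sela's structure theorem for monomorphisms of torsion-free hyperbolic groups, which gives $G$ a hyperbolic tower structure over $h(G)\supseteq A$ and hence a retraction $G\to h(G)$ directly.
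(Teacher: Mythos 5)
First, a caveat: the paper attributes this theorem to Groves (\cite{GroRet}) and does not prove it, so there is no house proof to compare against. Your architecture (Grushko reduction, then non-injective vs.\ injective cases) is the sensible route, and the tools you reach for are the right ones, but both cases have real gaps.

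In the non-injective case, Theorem 4.4 of \cite{PerinSklinosHomogeneity} gives you $\sigma\in\Mod_A(G)$ with $h\circ\sigma$ factoring through a proper quotient, but this is \emph{not} a preretraction. A preretraction must be $\Lambda$-related to the identity, i.e.\ restrict to a conjugation on each rigid vertex group of $\Lambda$. Your $h$ only fixes $A$, not a conjugate of each rigid vertex group, and precomposing by $\sigma\in\Mod_A(G)$ does not change the $\Lambda$-relation class (modular automorphisms already restrict to conjugations on rigid vertex groups, so $h\circ\sigma$ stays $\Lambda$-related to $h$, not to $\id$). The phrase ``further adjust \ldots\ using the normal form of Proposition \ref{NormalFormOfAMFixingH}'' papers over exactly the hard part: getting from an arbitrary non-injective endomorphism fixing $A$ to a preretraction requires an iterated shortening argument, which is precisely the substance of Groves's theorem, not a corollary of the normal form lemma.

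The injective case is worse, because the reduction you propose is logically impossible: if $h$ is injective then so is every $h^n$, and composing with an automorphism $\sigma_n\in\Mod_A(G)$ preserves injectivity, so ``$h^n\circ\sigma_n$ is eventually non-injective'' can never happen. The Bestvina--Paulin/Rips/shortening machinery in the injective setting yields a direct contradiction with minimality of the limiting action, not a reduction to the non-injective case. Your proposed short-circuit is also misapplied: Sela's tower/structure theorem concerns \emph{elementary} embeddings, not arbitrary monomorphisms, so it does not give a retraction $G\to h(G)$ for an arbitrary injective $h$. What you actually want here is the relative co-Hopf property already cited in the paper (Corollary 4.2 of \cite{PerinSklinosHomogeneity}): for $G$ torsion-free hyperbolic and freely indecomposable relative to $A_0$, an injective endomorphism fixing $A_0$ pointwise is an automorphism. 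Invoking that, the injective non-surjective case is vacuous, and no Bestvina--Paulin construction is needed.
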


\section{Algebraic closures} \label{AlgClosureSec}
As Lemma \ref{ForkAlg} shows, the notion of forking independence is preserved under taking algebraic closures of the triple under 
consideration. For instance in order to prove 
that two tuples $\bar{b}$, $\bar{c}$ fork over a set of parameters $A$, 
it is enough (by transitivity of forking and the above mentioned lemma) to show that some elements 
$b'$ and $c'$ in the respective algebraic closure $\acl(A\bar{b})$ and 
$\acl(A\bar{c})$ fork over $A$. Thus, it will be useful to understand $acl(A)$ for $A$ a subset of 
a torsion-free hyperbolic group $G$.

It is not hard to see that if the subgroup generated by $A$ is cyclic, 
the algebraic closure of $A$ is the maximal cyclic subgroup containing $A$ (see Lemma 3.1 in \cite{OuldHoucineVallinoAlgClosure}). 

If $G$ is concrete with respect to $A$, we can use the results of the previous section to get

\begin{prop}  \label{AlgClosure} 
Let $G$ be a torsion-free hyperbolic group which is concrete with respect to a subgroup $A$. 
Let $(T, v_A)$ be the pointed cyclic JSJ tree of $G$ with respect to $A$. 
Then $\Stab(v_A)$ is contained in the algebraic closure of $A$ in $G$.
\end{prop}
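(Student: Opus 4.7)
The plan is to combine the characterization of the algebraic closure in terms of $\Aut_A(G)$-orbits with the normal form for modular automorphisms. Since $G$ is concrete with respect to $A$, Theorem~\ref{TypesIsolated} ensures that $G$ is atomic over $A$, so by Lemma~\ref{AlgClos} an element $g \in G$ lies in $\acl(A)$ precisely when its $\Aut_A(G)$-orbit is finite. By the Rips--Sela finite index theorem stated just before Proposition~\ref{EquivalentDefOfMod}, $\Mod_A(G)$ has finite index in $\Aut_A(G)$, so it is enough to show that $\Mod_A(G)$ fixes $\Stab(v_A)$ pointwise.

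The next step is to unfold what a modular automorphism fixing $A$ must look like on $\Stab(v_A)$. By Proposition~\ref{EquivalentDefOfMod}, $\Mod_A(G) = \Aut^T_A(G)$, where $T$ is the pointed cyclic JSJ tree, and then I would apply the final statement of Lemma~\ref{NormalFormMod} with $H = A$ and $x = v_A$: any $\theta \in \Mod_A(G)$ decomposes as $\theta = \Conj(z) \circ \rho_1 \circ \cdots \circ \rho_r$ with each $\rho_j$ fixing $\Stab(v_A)$ pointwise and $z \in C(A)$. Thus $\theta$ restricts to conjugation by $z$ on $\Stab(v_A)$.

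A case split then finishes the argument. If $A$ is non-cyclic, then $A$ is non-abelian (since every abelian subgroup of a torsion-free hyperbolic group is cyclic), and a standard centralizer argument in torsion-free hyperbolic groups shows that the centralizer of a non-abelian subgroup is trivial; so $z = 1$ and $\theta$ is the identity on $\Stab(v_A)$. If $A$ is cyclic, then $\Stab(v_A) = C(A)$ by construction of the pointed JSJ, so $z$ lies in the abelian group $\Stab(v_A)$ and conjugation by $z$ is once again trivial on $\Stab(v_A)$.

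The main non-routine ingredient, and the step I expect to require the most care, is verifying the hypothesis of the strengthened form of Lemma~\ref{NormalFormMod}, namely that $v_A$ is not a surface type vertex. In the cyclic case this is immediate from the construction of the pointed JSJ, since $v_A$ is added by hand with cyclic stabilizer $C(A)$. In the non-cyclic case, it follows from the structural description of flexible vertices in the cyclic relative JSJ recalled in Section~\ref{JSJSec}: maximal boundary subgroups of a surface type vertex are cyclic, so a non-cyclic (hence non-abelian) $A$ cannot sit inside the stabilizer of any surface type vertex and must therefore fix a rigid vertex; the acylindricity of the tree of cylinders then pins this rigid vertex down as the unique vertex $v_A$.
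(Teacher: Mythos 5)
Your proposal is correct and follows essentially the same route as the paper: reduce to showing that $\Mod_A(G)$ fixes $\Stab(v_A)$ pointwise, then conclude via finite index of $\Mod_A(G)$ in $\Aut_A(G)$ together with Theorem~\ref{TypesIsolated}. The paper simply asserts that the generators of $\Mod_A(G) = \Aut^T_A(G)$ fix $\Stab(v_A)$ pointwise, whereas you spell this out carefully via the final clause of Lemma~\ref{NormalFormMod} and the cyclic/non-cyclic dichotomy for $C(A)$; this is the same idea made explicit.
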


\begin{proof} Let $\bar{b}$ be a tuple in $\Stab(v_A)$. By Proposition \ref{EquivalentDefOfMod}, $\Mod_A(G)$ is 
generated by elementary automorphisms associated to $(T, v_A)$ which fix the vertex group $\Stab(v_A)$ pointwise, 
thus $\bar{b}$ is fixed by $\Mod_A(G)$.

 
Since $\Mod_A(G)$ has finite index in $\Aut_A(G)$, the orbit of $\bar{b}$ under $\Aut_A(G)$ is finite. 
But by Theorem \ref{TypesIsolated}, this orbit is definable over $A$, thus $\bar{b}$ is in $acl_A(G)$
\end{proof}

The converse to this result does not hold: 
there could be some roots of elements of $\Stab(v_A)$ which are not in $\Stab(v_A)$, 
yet in torsion-free hyperbolic groups, algebraic closures are closed under taking roots. 
But this is the only obstruction: this was proved by Ould Houcine and Vallino in 
the case of free groups \cite{OuldHoucineVallinoAlgClosure}, and extends easily to the case considered here.

We continue with an easy corollary of the above proposition.

\begin{cor}\label{FinGenAlg}
Let $G$ be a torsion-free hyperbolic group which is concrete with respect to a subgroup $A$.  
Then there is a finitely generated subgroup $A_0$ of $A$ such that $A\subseteq acl_G(A_0)$. In particular $acl_G(A)=acl_G(A_0)$.
\end{cor}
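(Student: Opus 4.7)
The plan is to reduce to Proposition \ref{AlgClosure} by producing a finitely generated subgroup $A_0$ of $A$ satisfying two conditions: (i) $G$ is concrete with respect to $A_0$, and (ii) $A$ is contained in the stabilizer $\Stab(v_{A_0})$ of the basepoint of the pointed cyclic JSJ tree of $G$ with respect to $A_0$. Once such an $A_0$ is found, Proposition \ref{AlgClosure} gives $\Stab(v_{A_0}) \subseteq \acl_G(A_0)$, and (ii) yields $A \subseteq \acl_G(A_0)$ directly.

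To produce $A_0$, first I would apply Proposition 3.7 of \cite{PerinSklinosHomogeneity} (which is already invoked in the excerpt) to obtain a finitely generated $A_0' \subseteq A$ such that $G$ is freely indecomposable with respect to $A_0'$ and $A$ is elliptic in every cyclic JSJ tree of $G$ with respect to $A_0'$. Enlarging $A_0'$ if necessary, I may assume $A_0'$ is non-cyclic (the case where $A$ itself is cyclic is handled by taking $A_0$ to be a generator of the maximal cyclic overgroup of $A$, which realizes $A$ inside its own algebraic closure). Then the basepoint $v_{A_0'}$ is the unique vertex fixed by $A_0'$, and since $\Fix(A)$ is a subtree of $\Fix(A_0') = \{v_{A_0'}\}$ and $A$ is elliptic, $A \subseteq \Stab(v_{A_0'})$, securing condition (ii).

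The main obstacle will be promoting $A_0'$ to a finitely generated $A_0'\subseteq A_0\subseteq A$ which additionally verifies condition (i). If $G$ admits an extended hyperbolic floor structure over $A_0'$, then $A_0'$ is contained in a proper retract $H \lneq G$ of a specific form; since $G$ is concrete with respect to $A$, the full subgroup $A$ cannot be contained in $H$, so I pick $a \in A \setminus H$ and add it to $A_0'$, destroying this particular floor structure. The key technical point is to argue that only finitely many such retracts need to be handled (for instance via the finiteness of JSJ-type data used in \cite{LouderPerinSklinosTowers}), so the procedure terminates in finitely many steps and yields a finitely generated $A_0$ with respect to which $G$ is concrete. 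Note that condition (ii) is preserved: since $A_0 \supseteq A_0'$ is still finitely generated with $A_0 \subseteq A$, the same argument as above (re-applying Proposition 3.7 of \cite{PerinSklinosHomogeneity} to $A_0$ if needed, and using $\Fix(A) \subseteq \Fix(A_0)$) gives $A \subseteq \Stab(v_{A_0})$.

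Finally, Proposition \ref{AlgClosure} applied to $A_0$ gives $\Stab(v_{A_0}) \subseteq \acl_G(A_0)$, whence $A \subseteq \acl_G(A_0)$. The equality $\acl_G(A) = \acl_G(A_0)$ then follows from monotonicity ($A_0 \subseteq A$ gives $\acl_G(A_0) \subseteq \acl_G(A)$) combined with idempotence of the algebraic closure operator, since $A \subseteq \acl_G(A_0)$ implies $\acl_G(A) \subseteq \acl_G(\acl_G(A_0)) = \acl_G(A_0)$.
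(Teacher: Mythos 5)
Your overall plan matches the paper's: apply Proposition \ref{AlgClosure} to the finitely generated $A_0$ supplied by Proposition 3.7 of \cite{PerinSklinosHomogeneity}, note that $A$ fixes the basepoint $v_{A_0}$ so $A\subseteq\Stab(v_{A_0})\subseteq\acl_G(A_0)$, and conclude $\acl_G(A)=\acl_G(A_0)$ by monotonicity and idempotence. The paper disposes of concreteness of $G$ over $A_0$ in one clause; your proposed iterative enlargement is where the argument breaks down. You assert that ``only finitely many such retracts need to be handled'' but offer no termination argument, and as $A_0'$ grows the collection of possible extended hyperbolic floor structures over it changes, so nothing a priori bounds the iteration.

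In fact the iteration is unnecessary, because you already hold both halves of a one-step argument. Proposition 3.7 gives you that $A$ is elliptic in every cyclic $(G,A_0')$-tree, and you arranged $A_0'$ to be non-cyclic. If $G$ admitted an extended hyperbolic floor structure over $A_0'$ with retract $H$, the corresponding splitting is a cyclic $(G,A_0')$-tree, so $A$ is elliptic in it; since $A_0'\subseteq A$ is non-cyclic and edge stabilizers are cyclic, $A_0'$ fixes a unique vertex, hence $A$ fixes that same vertex, forcing $A\subseteq H$. This turns the floor over $A_0'$ into a floor over $A$, contradicting concreteness of $G$ over $A$ --- so $G$ is already concrete over $A_0'$ with no enlargement needed. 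You correctly noted that concreteness over $A$ means $A\not\subseteq H$, but then used this to restart an iteration instead of combining it with ellipticity of $A$ to reach an immediate contradiction. (A minor further point: if $A$ is cyclic it is already finitely generated, so one may take $A_0=A$; a ``generator of the maximal cyclic overgroup of $A$'' need not lie in $A$ and so is not an admissible choice of $A_0$.)
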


\begin{proof}
Take $A_0$ to be the finitely generated subgroup of $A$ given by Proposition 3.7 in \cite{PerinSklinosHomogeneity}.
It is not hard to see that $G$ is concrete with respect to $A_0$. Let $(T,v_{A_0})$ be the 
pointed cyclic JSJ tree of $G$ with respect to $A_0$. By Proposition \ref{AlgClosure} 
we have that $\Stab(v_{A_0})\subseteq acl_G(A_0)$, but $A$ fixes $v_{A_0}$, thus we get the result.
\end{proof}

We moreover prove:

\begin{prop} \label{MinSubtreeAndAlgClos} Let $G$ be a torsion-free hyperbolic group concrete with respect to a 
subgroup $A$, and let $H$ be a finitely generated non abelian subgroup of $G$ which contains $A$.
Let $(T,v_A)$ and $(T',v_H)$ be the pointed cyclic JSJ trees of $G$ relative to $A$ and $H$ respectively.
Denote by $T_{H}$ the minimal subtree of $H$ in $T$.
\begin{itemize}
 \item If $U$ is the non cyclic stabilizer of a rigid vertex of $T_{H}$, then $U$ is contained in $\Stab(v_H)$. 
 \item If $S$ is the stabilizer of a flexible vertex of $T_{H}$ with corresponding surface $\Sigma$, 
there is an element $\gamma$ of $S$ corresponding to a non boundary parallel simple closed curve on $\Sigma$ which is contained in $\Stab(v_H)$. 
\end{itemize}
In particular $U$ and $\gamma$ are fixed by $\Mod_H(G)$, and contained in $\acl_{G}(H)$.
\end{prop}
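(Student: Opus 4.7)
The proposition reduces, by a common mechanism, to establishing the two inclusions $U \subset \Stab(v_H)$ and $\gamma \in \Stab(v_H)$. Once these are granted, applying Proposition \ref{EquivalentDefOfMod} to the pointed cyclic JSJ tree $(T', v_H)$ of $G$ with respect to $H$ gives $\Mod_H(G) = \Aut^{T'}_H(G)$; since $v_H$ is a non-surface type vertex (being the unique vertex fixed by the non-abelian $H$), the last part of Lemma \ref{NormalFormMod} provides a normal form for elements of $\Mod_H(G)$ in which each elementary factor fixes $\Stab(v_H)$ pointwise. Hence $\Mod_H(G)$ fixes $U$ and $\gamma$ pointwise. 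Combining this with the finite index of $\Mod_H(G)$ in $\Aut_H(G)$ and the atomicity of $G$ over $H$ (Theorem \ref{TypesIsolated}), we conclude via Lemma \ref{AlgClos} that $U$ and $\gamma$ are contained in $\acl_G(H)$.

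To prove $U \subset \Stab(v_H)$: rigidity of $v$ in $T$ means that $U$ is universally elliptic in cyclic $(G,A)$-trees, and since $A \subset H$ the tree $T'$ is itself a cyclic $(G,A)$-tree, so $U$ is elliptic in $T'$. The strong 2-acylindricity of $T'$ together with the non-cyclicity of $U$ force $U$ to have a unique fixed vertex $w' \in T'$. To identify $w' = v_H$, I would use the universal compatibility of $T$ (Remark \ref{AcylindricalAndUnivCompatible}) to form a common refinement $\hat{T}$ of $T$ and $T'$, with collapse maps $p : \hat{T} \to T$ and $q : \hat{T} \to T'$. The minimal subtree $\hat{T}_H$ of $H$ in $\hat{T}$ then satisfies $p(\hat{T}_H) = T_H$ and $q(\hat{T}_H) \subset \{v_H\}$, since $H$ is elliptic in $T'$ fixing $v_H$. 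Because $v \in T_H$, the subtree $\hat{T}_v := p^{-1}(v)$ meets $\hat{T}_H$ in a non-empty subtree. Universal ellipticity of $U$ gives a fixed vertex $\hat{u} \in \hat{T}_v$, and the crucial step is to show that $\hat{u}$ may be chosen inside $\hat{T}_H$, yielding $w' = q(\hat{u}) = v_H$.

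For $\gamma$: Proposition \ref{NormalFormOfAMFixingH} guarantees an element $\gamma \in S$ representing a non-boundary parallel simple closed curve on $\Sigma$ such that every vertex automorphism at $v_j \in T_H$ appearing in the normal form of an element of $\Mod_H(G)$ fixes $\gamma$. Such a $\gamma$ corresponds to a one-edge cyclic splitting of $S$, which refines $T$ at $v_j$ to produce a cyclic $(G,A)$-tree in which $\langle \gamma \rangle$ is an edge stabilizer and in which $H$ remains elliptic (by the very condition defining $\gamma$). A parallel common-refinement argument with $T'$ then places $\gamma$ in $\Stab(v_H)$.

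The main obstacle is the technical claim in the first argument: that $U$'s fixed-point set in $\hat{T}_v$ meets $\hat{T}_H$, equivalently, that the refinement of the rigid vertex $v$ is ``trivial from the point of view of $U$'' in a manner compatible with $H$'s minimal subtree. This requires a careful analysis of the edge stabilizers appearing in $\hat{T}_v$, exploiting the cyclic structure of $T'$ and the rigidity of $v$ to ensure that $U$'s action on $\hat{T}_v$ leaves invariant at least one vertex of $\hat{T}_v \cap \hat{T}_H$.
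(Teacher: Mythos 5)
Your overall strategy — pass to the common refinement $\hat T$ of $T$ and $T'$, and use the collapse maps to drop the relevant vertex stabilizer into $\Stab(v_H)$ — matches the paper's, but both of your two main steps contain genuine gaps, and in the rigid case you have correctly identified the hurdle without resolving it. The observation that makes the paper's argument go through (and which your proof is missing) is very simple: the refinement $\hat T$ of $T$ is obtained by blowing up \emph{only the surface type vertices} of $T$ (this is exactly how one builds a common refinement: replace each vertex $v$ by the minimal subtree of $\Stab(v)$ in $T'$, and a rigid vertex stabilizer is universally elliptic hence elliptic in $T'$, so its minimal subtree is a point). Thus $p^{-1}(x)$ \emph{is literally a single vertex} $\hat x$ when $x$ is rigid. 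Since $p(\hat T_H)=T_H\ni x$, some vertex of $\hat T_H$ maps to $x$ under $p$, and that vertex can only be $\hat x$, so $\hat x\in\hat T_H$; then $p'(\hat x)=v_H$ gives $U=\Stab(\hat x)\subseteq \Stab(v_H)$. There is no ``crucial step'' about $U$'s fixed-point set meeting $\hat T_H$ left to verify — the fixed-point set is a singleton.

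Your argument for $\gamma$ is the more serious problem. Proposition \ref{NormalFormOfAMFixingH} does \emph{not} produce a single element $\gamma\in S$ fixed by every vertex factor appearing in the normal form of every $\theta\in\Mod_H(G)$: the non-boundary-parallel curve whose conjugacy class is preserved depends on the particular $\theta$ (and on the choice of refinement of $T$ used to analyse it). Reading off a universal fixed $\gamma$ from that proposition is essentially assuming the conclusion. The paper instead stays entirely inside the common-refinement picture: for a surface type vertex $x$ of $T_H$, the preimage $p^{-1}(x)$ is the tree dual to a system of disjoint simple closed curves on $\Sigma$. A vertex $\hat x$ in $p^{-1}(x)\cap\hat T_H$ (again nonempty because $p(\hat T_H)=T_H$) has stabilizer corresponding to a subsurface of $\Sigma$ which is not a boundary-parallel annulus, hence $\Stab(\hat x)$ contains some $\gamma$ representing a non-boundary-parallel simple closed curve; and $p'(\hat x)=v_H$ puts $\Stab(\hat x)$, hence $\gamma$, inside $\Stab(v_H)$. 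The ``in particular'' clause you supply (via Proposition \ref{EquivalentDefOfMod}, Lemma \ref{NormalFormMod}, Theorem \ref{TypesIsolated}, Lemma \ref{AlgClos}) is fine, though the paper gets it in one line from Proposition \ref{AlgClosure} applied with $H$ in place of $A$.
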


\begin{proof}The tree $T'$ is a cyclic 
tree in which $A$ is elliptic, thus the JSJ tree $T$ admits a refinement $\hat{T}$ which collapses onto $T'$.  We have collapse maps $p: \hat{T} \to T$ and $p': \hat{T} \to T'$. 

Denote by $\hat{T}_H$ the minimal subtree of $H$ in $\hat{T}$: we have $p(\hat{T}_H) = T_H$ and  $p'(\hat{T}_H) = \{v_H\}$. Let now $x$ be a vertex of $T_H$.

If $x$ is a rigid vertex of $T$ with non cyclic stabilizer $U$, $p^{-1}(x)$ is reduced to a point 
$\hat{x}$ which must lie in $\hat{T}_H$, and also has stabilizer $U$. Now $p'(\hat{x}) = v_H$ so $U$ lies in $\Stab(v_H) \subseteq \acl_G(H)$.

If $x$ is a surface type vertex of $T$, the action of $\Stab(x)$ on $p^{-1}(x)$ is dual to 
a set of disjoint simple closed curves on $\Sigma$, so the stabilizer of any vertex $\hat{x}$ in $p^{-1}(x) \cap \hat{T}_H$ corresponds to a subsurface of 
$\Sigma$ which is not an annulus parallel to a boundary. In particular, $\Stab (\hat{x})$ 
contains an element $\gamma$ corresponding to a non boundary parallel simple closed curve on $\Sigma$. Again $p'(\hat{x}) = v_H$ so $\Stab(\hat{x})$ lies in $\acl_G(H)$. In particular $\gamma$ is in $\Stab(v_H) \subseteq \acl_G(H)$. 
\end{proof}

We finish this section with a result which generalizes Proposition \ref{MinSubtreeAndAlgClos}:
\begin{prop}\label{RigidTranslates}
Let $G$ be a torsion-free hyperbolic group concrete with respect to a 
subgroup $A$, and let $H$ be a finitely generated non abelian subgroup of $G$ which contains $A$.
Let $(T,v_A)$ be the pointed cyclic JSJ tree of $G$ relative to $A$.
Denote by $T_{H}$ the minimal subtree of $H$ in $T$.

Let $v$ be a vertex of $T$ such that the path from $T_H$ to $v$ consists of edges which all lie in translates of $T_H$, and does not contain any surface type vertices.

Then $\Stab(v)\subseteq acl_G(H)$.
\end{prop}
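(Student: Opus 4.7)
The plan is to prove the stronger inclusion $\Stab(v)\subseteq\Stab(v_H)$, where $(T',v_H)$ is the pointed cyclic JSJ tree of $G$ with respect to $H$; combined with $\Stab(v_H)\subseteq\acl_G(H)$ (the inclusion underlying Proposition \ref{MinSubtreeAndAlgClos}, obtained from Proposition \ref{AlgClosure} applied with $H$ in place of $A$), this gives the result.

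By universal compatibility of the pointed JSJ tree (Remark \ref{AcylindricalAndUnivCompatible}), there exists a refinement $\hat T$ of $T$ together with collapse maps $p:\hat T\to T$ and $p':\hat T\to T'$. Only surface-type vertices of $T$ can be blown up in this refinement: rigid vertices are by definition elliptic in every cyclic $(G,A)$-tree, while Z-type vertices (as well as $v_A$ when $A$ is cyclic) have cyclic stabilizers admitting no non-trivial cyclic splitting. Hence every non-surface vertex and every edge of $T$ lifts uniquely to $\hat T$ with the same stabilizer. Let $\hat T_H$ denote the minimal $H$-invariant subtree of $\hat T$: a standard minimality argument gives $p(\hat T_H)=T_H$, and since $H$ is non-cyclic and fixes the unique vertex $v_H$ of $T'$, also $p'(\hat T_H)=\{v_H\}$.

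The main step is a walk along the path from $T_H$ to $v$ in $T$. Write this path as $v_0-f_1-v_1-\cdots-f_k-v_k=v$ with $v_0\in T_H$. By hypothesis each $v_i$ is non-surface, so lifts uniquely to a vertex $\hat v_i$ of $\hat T$ with $\Stab(\hat v_i)=\Stab(v_i)$, and each $f_i$ lifts uniquely to an edge $\hat f_i$. The unique lift $\hat v_0$ of $v_0\in T_H$ lies in $\hat T_H$, so $p'(\hat v_0)=v_H$. Each $f_i$ lies in a translate $g_i\cdot T_H$ by hypothesis, hence $\hat f_i$ lies in $g_i\cdot\hat T_H$, which $p'$ collapses to the single point $g_i\cdot v_H$. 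Thus $p'(\hat v_{i-1})=p'(\hat v_i)$ for each $i$, and by induction $p'(\hat v)=v_H$. This gives $\Stab(v)=\Stab(\hat v)\subseteq\Stab(p'(\hat v))=\Stab(v_H)\subseteq\acl_G(H)$, as desired.

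The step most likely to need care is the assertion that only surface-type vertices are blown up in $\hat T$ (so that non-surface vertices and edges of $T$ have unique lifts with preserved stabilizers); this reflects the rigidity properties of the cyclic JSJ, but requires unpacking the construction of the common refinement given by universal compatibility, and separately verifying the minor case where $A$ is cyclic so that $T$ contains the extra edge incident to $v_A$.
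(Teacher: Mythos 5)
Your proof is correct and follows essentially the same route as the paper's: pass to the common refinement $\hat{T}$ of the pointed JSJ trees $T$ and $T'$ (relative to $A$ and to $H$ respectively), observe that non-surface vertices and edges lift uniquely with preserved stabilizers, and walk along the path from $T_H$ to $v$ using the fact that each edge lies in a translate of $\hat{T}_H$, which $p'$ collapses to a point. The only cosmetic difference is that the paper phrases the collapse step via translates of paths $[\hat{v}_A, h\cdot\hat{v}_A]$ inside $\hat{T}_H$, whereas you invoke $p'(\hat{T}_H)=\{v_H\}$ directly — yours is marginally cleaner and amounts to the same thing; and the caveat you flag at the end (that only surface-type vertices get blown up in the refinement) is asserted without elaboration in the paper's proof as well.
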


\begin{proof} Let $(T',v_H)$ be the pointed cyclic JSJ tree of $G$ relative to $H$.

The JSJ tree $T$ admits a refinement $\hat{T}$ which collapses onto $T'$.  We have collapse maps $p: \hat{T} \to T$ and $p': \hat{T} \to T'$. Any non surface type vertex $y$ of $T$ (respectively any edge $e$ of $T$) has a preimage by $p$ a single vertex (respectively a unique edge), which we denote $\hat{y}$ (respectively $\hat{e}$). Moreover we have $\Stab(\hat{y}) = \Stab(y)$.

The hypotheses on $v$ imply that the path between $\hat{T}_H$ and $\hat{v}$ consists exactly of the lifts $\hat{e}$ of the edges $e$ of the path $[u,v]$ between $T_H$ and $v$. 

Now each $\hat{e}$ lies in a translate of the path $[\hat{v}_A, h \cdot \hat{v_A}]$ for some $h \in H$, and this path is collapsed under the map $p'$. Thus all the edges $\hat{e}$ are collapsed under $p'$ so $p'(\hat{v}) = p'(\hat{u})$.

Finally, as in the proof of Proposition \ref{MinSubtreeAndAlgClos}, we can see that for any vertex $y$ of $T_H$ which is of non surface type we have $p'(\hat{y}) = v_H$. Thus $p'(\hat{u}) = v_H$, and $\Stab(v) = \Stab(\hat{v})$ is contained in $\Stab(v_H) \subseteq \acl_G(H)$.
\end{proof}

\section{The curve complex} \label{CCSec}
In this section we give some basic definitions and results about the curve 
complex assigned to a surface introduced by Harvey \cite{HarveyCC}. These will be useful for the proof of Theorem \ref{FreelyIndec}.

\begin{defi}\label{CurveComplex}
Let $\Sigma$ be a surface with (possibly empty) boundary. 
Then the curve complex ${\cal{C}}(\Sigma)$ is the simplicial complex given by: 
\begin{itemize}
 \item[(i)] $0$-simplices are simple closed curves (up to free homotopy) on $\Sigma$ which do not bound a disk, an annulus, or a M\"obius band; 
 \item[(ii)] A subset $\{\gamma_0,\ldots,\gamma_k\}$ of the set of $0$-simplices forms a $k$-simplex if the curves in the subset can be realized disjointly.
\end{itemize}
\end{defi}

\begin{figure}[ht!]

\centering
\includegraphics[width=.9\textwidth]{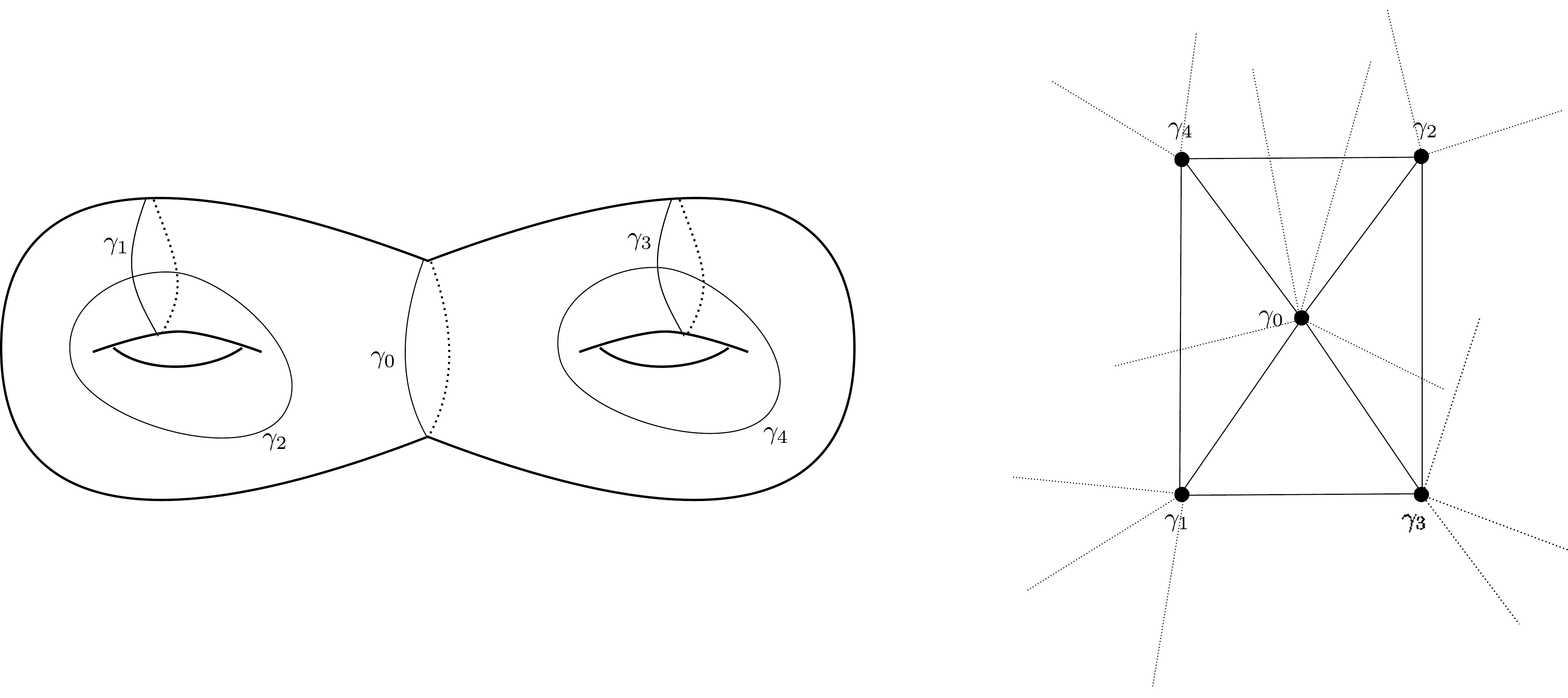}
\caption{Part of the curve complex of the orientable surface of genus $2$.}

\end{figure}

\begin{rmk}
Let $\Sigma_{g,n}$ be the orientable surface of genus $g$ with $n$ boundary components. 
In the following sporadic cases Definition \ref{CurveComplex} gives a degenerate discrete set or even the empty set.  
\begin{itemize}
 \item $\Sigma_{0,n}$ for $n\leq 4$;
 \item $\Sigma_{1,n}$ for $n\leq 1$;
 \item The $n$-punctured projective plane for $n \leq 2$;
 \item The $n$-punctured Klein bottle with $n \leq 1$. 
\end{itemize}
\end{rmk}

If $\Sigma$ is the once-punctured 
torus $\Sigma_{1,1}$ or the four-punctured sphere $\Sigma_{0,4}$ we modify the second part of the definition: a subset of the set of $0$-simplices forms a simplex if the corresponding curves can be realized with intersection number at most $1$ (respectively $2$ in the case of $\Sigma_{0,4}$). Note that in both cases 
the resulting simplicial complex is the well known Farey graph.


Combining Theorem 1.1 in \cite{MasurMinskyHypCC} and the results in the appendix of 
\cite{BestvinaFujiwaraQHomoOfMCG} we get

\begin{thm}
Let $\Sigma$ be a surface which is either a punctured torus or has Euler characteristic at most $-2$. 
Then ${\cal C}(\Sigma)$ has infinite diameter.
\end{thm}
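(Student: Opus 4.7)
The plan is to exhibit orbits of infinite diameter in ${\cal C}(\Sigma)$ under the action of a pseudo-Anosov mapping class. First, I would recall that the mapping class group $\mathrm{MCG}(\Sigma)$ acts on the curve complex by simplicial automorphisms, and hence by isometries on the $1$-skeleton of ${\cal C}(\Sigma)$ equipped with the path metric. (In the sporadic case of the once-punctured torus, the same holds with ${\cal C}(\Sigma)$ replaced by the Farey graph as in the definition above.)

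For each surface $\Sigma$ satisfying the hypothesis, Thurston's classification of surface homeomorphisms, together with standard constructions such as Thurston's or Penner's production of pseudo-Anosovs from two filling multicurves, guarantees the existence of a pseudo-Anosov element $\phi \in \mathrm{MCG}(\Sigma)$. Fix such a $\phi$ and any vertex $v$ of ${\cal C}(\Sigma)$.

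The crucial step is to show that $\phi$ acts on ${\cal C}(\Sigma)$ as a \emph{hyperbolic} isometry, i.e.\ that its stable translation length
$$\ell(\phi) = \lim_{n \to \infty} \frac{d_{{\cal C}(\Sigma)}(v, \phi^n v)}{n}$$
is strictly positive. This is exactly the combination cited in the statement: Theorem 1.1 of Masur--Minsky asserts that ${\cal C}(\Sigma)$ is Gromov hyperbolic, while the appendix of Bestvina--Fujiwara establishes that pseudo-Anosov elements act on this hyperbolic complex with positive translation length.

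Given $\ell(\phi) > 0$, the orbit $\{\phi^n v\}_{n \geq 0}$ satisfies $d(v, \phi^n v) \geq n\,\ell(\phi) - C$ for a uniform constant $C$, so arbitrarily large distances are realised and $\diam({\cal C}(\Sigma)) = \infty$. The main obstacle in a self-contained account would be establishing the positivity of $\ell(\phi)$, which requires both the Gromov hyperbolicity of ${\cal C}(\Sigma)$ and control over how pseudo-Anosov dynamics displace curves (for instance via exponential growth of geometric intersection numbers under iteration); here both ingredients are imported directly from the cited references, so the proof reduces to assembling them.
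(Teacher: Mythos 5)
Your argument is correct and assembles exactly the two ingredients the paper itself cites: Gromov hyperbolicity of the curve complex (Masur--Minsky, Theorem~1.1) and the fact that a pseudo-Anosov acts as a loxodromic isometry (Bestvina--Fujiwara, appendix), whence any pseudo-Anosov orbit is unbounded. The only bookkeeping point worth flagging is that the hypotheses here include non-orientable surfaces (the paper's surface-type vertices can carry, e.g., fundamental groups of punctured projective planes), and for those Masur--Minsky's Theorem~1.1 does not apply directly --- the hyperbolicity of $\mathcal{C}(\Sigma)$ in the non-orientable and sporadic cases is itself part of what the Bestvina--Fujiwara appendix supplies, via a quasi-isometric comparison with the orientation double cover; so both of your ingredients (a) and (b) are owed to that appendix outside the orientable non-sporadic case. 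With that attribution adjusted, your proof is a faithful unpacking of the citation the paper gives.
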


The mapping class group ${\cal MCG}(\Sigma)$ of the surface $\Sigma$, that is, the group of isotopy classes of the 
self-homeomorphisms of $\Sigma$ (fixing each boundary component pointwise), acts on the curve complex of $\Sigma$ in the obvious way. 
We observe the following:

\begin{lemma} \label{DisjointBallsInCC} 
Let $\Sigma$ be a surface which is either a punctured torus or has Euler characteristic at most $-2$. Let $R \geq 0$. 
There exists a sequence of elements $h_n$ of ${\cal MCG}(\Sigma)$ such that for any vertex 
$x$ of ${\cal C}(\Sigma)$ the translates $h_n(B_R(x))$ of the ball of radius $R$ around $x$ by the $h_n$ are pairwise disjoint.
\end{lemma}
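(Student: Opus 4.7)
My approach is to take the sequence $h_n$ to be suitably spaced powers of a single pseudo-Anosov mapping class and exploit the positivity of its stable translation length on $\mathcal{C}(\Sigma)$.

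Recall from Masur--Minsky (the result cited just above for infinite diameter) that $\mathcal{C}(\Sigma)$ is Gromov hyperbolic and that any pseudo-Anosov element $f \in \mathcal{MCG}(\Sigma)$ acts on $\mathcal{C}(\Sigma)$ as a loxodromic isometry with positive stable translation length
$$ \tau(f) := \lim_{n\to\infty} \frac{d_{\mathcal{C}(\Sigma)}(x, f^n(x))}{n} > 0,$$
the limit being independent of the vertex $x$. Such elements exist on every surface appearing in the hypothesis, including the sporadic cases of the punctured torus and four-punctured sphere where $\mathcal{C}(\Sigma)$ is the Farey graph. This is really the content of the infinite diameter statement and is the only deep input needed.

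I then set $h_n := f^{Kn}$ for a positive integer $K$ to be chosen below. Since $h_n$ acts by isometries on (the $1$-skeleton of) $\mathcal{C}(\Sigma)$, we have $h_n(B_R(x)) = B_R(h_n(x))$ for every vertex $x$, so it is enough to show
$$ d_{\mathcal{C}(\Sigma)}(h_n(x), h_m(x)) = d_{\mathcal{C}(\Sigma)}\bigl(x, f^{K(n-m)}(x)\bigr) > 2R $$
for all $n \neq m$ and all vertices $x$. The standard lower bound $d(x, g(x)) \geq \tau(g)$, valid for any isometry of any metric space (it follows from the triangle inequality $d(x, g^n(x)) \leq n \cdot d(x, g(x))$ and the definition of $\tau$), together with $\tau(f^k) = |k|\tau(f)$, yields
$$ d_{\mathcal{C}(\Sigma)}\bigl(x, f^{K(n-m)}(x)\bigr) \geq \tau\bigl(f^{K(n-m)}\bigr) = K|n-m|\tau(f) \geq K\tau(f) $$
uniformly in $x$. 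Choosing $K$ so that $K\tau(f) > 2R$ (possible since $\tau(f) > 0$) finishes the proof.

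The only genuine difficulty is the Masur--Minsky input; everything else is formal from the triangle inequality. In particular, there is no obstacle in the sporadic cases, nor any need to worry about the geometry of the ball $B_R(x)$ itself, since the argument controls only the displacement of its center.
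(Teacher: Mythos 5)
Your proof is correct, but it takes a genuinely different route from the paper. The paper's argument is a coarse covering argument: it uses the elementary fact that there are only finitely many ${\cal MCG}(\Sigma)$-orbits of vertices in ${\cal C}(\Sigma)$, so the action is cobounded (any vertex is within uniform distance $M$ of any given orbit), and then simply picks vertices $y_n$ with $d(y_n,y_m)>2(M+R)$ using infinite diameter, replacing each $y_n$ by a nearby orbit representative $h_n(x)$. Your argument is dynamical: you take powers of a single pseudo-Anosov $f$ and exploit the uniform lower bound $d(x,g(x))\geq\tau(g)$ together with $\tau(f^k)=|k|\tau(f)$. Both are valid, and your version has the merit of producing an explicit sequence $h_n=f^{Kn}$ rather than relying on an existence argument. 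The tradeoff is in the input: the paper uses only the infinite-diameter statement (the very thing cited just before the lemma) plus the easy cofiniteness of the action, which covers the sporadic and non-orientable cases without further comment. Your proof instead needs the fact that pseudo-Anosov classes act with positive stable translation length on ${\cal C}(\Sigma)$. This is indeed part of the Masur--Minsky circle of results for the non-sporadic orientable case, but for the sporadic surfaces and the non-orientable surfaces that the lemma's hypothesis also allows, you are implicitly invoking a slightly stronger statement than the paper officially cites; it is true, but you would want to flag and source it. In short: correct, genuinely different, a bit more explicit, a bit more demanding on the black-boxed input.
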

\begin{proof} 
It is immediate that there is only a finite number of orbits of vertices in ${\cal C}(\Sigma)$ under the action of ${\cal MCG}(\Sigma)$. 
Let $M$ be such that any ball of radius $M$ in ${\cal C}(\Sigma)$ meets each of these orbits. 

Since ${\cal C}(\Sigma)$ has infinite diameter, we can find a sequence $y_n$ of vertices such that 
$d(y_n, y_m) > 2(M+R)$ whenever $m \neq n$. 
By our choice of $M$, each of the balls $B_M(y_n)$ contains a vertex $x_n = h_n(x)$ in the orbit of $x$. 
Thus the balls $B_R(x_n)$ are pairwise disjoint. 
\end{proof}

Before proving our next lemma we recall the correspondence 
between the geometric notions mentioned above and their algebraic counterparts.   

We fix a surface with a basepoint $(\Sigma,*)$. We note that the free homotopy class of a simple 
closed curve $\alpha$ on $\Sigma$ corresponds to the conjugacy class $[a]$ of an element $a$ representing 
$\alpha$ in $S:=\pi_1(\Sigma,*)$.

Moreover, a mapping class $h$ in ${\cal MCG}(\Sigma)$ gives rise to an outer automorphism of the fundamental group 
$S$ as a surface group with boundary (that is, an outer automorphism that fixes the conjugacy 
classes corresponding to the boundary components). It is a classical result that this induces an isomorphism between 
${\cal MCG}(\Sigma)$ and $\Out(S)$. So, we have:

\begin{lemma}\label{sccfork}
Let $\Sigma$ be a surface which is either a punctured torus or has Euler characteristic at most $-2$. 
Let $[a],[b]$ be conjugacy classes representing simple closed curves $\alpha,\beta$ in $\Sigma$. 
Then there is a sequence $(\rho_n)_{n<\omega}\in Out(S)$, such that for any $f_1,f_2\in Out_{[b]}(S)$ (i.e. outer 
automorphisms fixing the conjugacy class of $b$) and $i\neq j$, $\rho_i\circ f_1([a])\neq \rho_j\circ f_2([a])$.  
\end{lemma}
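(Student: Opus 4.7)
The plan is to transfer the problem to the curve complex ${\cal C}(\Sigma)$ via the isomorphism $\Out(S) \cong {\cal MCG}(\Sigma)$, and then to exploit Lemma \ref{DisjointBallsInCC}. Let $x_a$ and $x_b$ denote the vertices of ${\cal C}(\Sigma)$ corresponding to the isotopy classes of $\alpha$ and $\beta$ respectively (using the standing assumption that they are essential, i.e.\ do not bound a disk, annulus or M\"obius band, so that they do define vertices of ${\cal C}(\Sigma)$). The key observation is that any outer automorphism $f \in \Out_{[b]}(S)$ fixes $[b]$, hence the corresponding mapping class fixes the vertex $x_b$, so that for every $f \in \Out_{[b]}(S)$ one has
$$d_{{\cal C}(\Sigma)}(f(x_a), x_a) \leq d_{{\cal C}(\Sigma)}(f(x_a), x_b) + d_{{\cal C}(\Sigma)}(x_b, x_a) = 2\, d_{{\cal C}(\Sigma)}(x_a, x_b).$$
Setting $R := 2\, d_{{\cal C}(\Sigma)}(x_a, x_b)$, the orbit of $x_a$ under $\Out_{[b]}(S)$ is contained in the ball $B_R(x_a)$.

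Next I would invoke Lemma \ref{DisjointBallsInCC} (whose hypothesis on $\Sigma$ matches the one in the present statement) to produce a sequence $h_n \in {\cal MCG}(\Sigma)$ such that the translates $h_n(B_R(x_a))$ are pairwise disjoint. Let $\rho_n \in \Out(S)$ be the outer automorphism corresponding to $h_n$ under the isomorphism ${\cal MCG}(\Sigma) \cong \Out(S)$. For $i \neq j$ and $f_1, f_2 \in \Out_{[b]}(S)$, the vertex $\rho_i\circ f_1(x_a)$ lies in $h_i(B_R(x_a))$ while $\rho_j\circ f_2(x_a)$ lies in $h_j(B_R(x_a))$, and these two balls are disjoint. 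Hence $\rho_i\circ f_1(x_a) \neq \rho_j\circ f_2(x_a)$ as vertices of ${\cal C}(\Sigma)$, and a fortiori the conjugacy classes $\rho_i\circ f_1([a])$ and $\rho_j\circ f_2([a])$ are distinct.

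The only subtlety is the dictionary between conjugacy classes in $S$ and vertices of ${\cal C}(\Sigma)$: distinct conjugacy classes of simple loops can represent the same (unoriented) isotopy class, namely a class and its inverse. However the implication only goes in the useful direction: if the two vertices $\rho_i\circ f_1(x_a)$ and $\rho_j\circ f_2(x_a)$ are distinct, then the underlying conjugacy classes must also be distinct, which is exactly what we need. I expect no serious obstacle beyond being careful that $\alpha$ and $\beta$ really do correspond to vertices of ${\cal C}(\Sigma)$; in the intended applications (surface type vertices of a JSJ, with $\alpha$ and $\beta$ non-boundary-parallel) this is automatic.
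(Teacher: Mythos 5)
Your proof is correct and takes essentially the same approach as the paper, which simply cites Lemma \ref{DisjointBallsInCC} and leaves the verification as a "straightforward exercise"; you fill in exactly the right details (the triangle inequality bound via the fixed vertex $x_b$, the translation by the isomorphism $\Out(S)\cong{\cal MCG}(\Sigma)$, and the observation that distinct vertices imply distinct conjugacy classes). The only cosmetic difference is that you center the ball at $x_a$ with radius $2\,d(x_a,x_b)$, whereas the paper uses $R=d(\alpha,\beta)$, which corresponds to centering the ball at $x_b$ (the orbit of $x_a$ under $\Out_{[b]}(S)$ lies in $B_R(x_b)$ directly); since Lemma \ref{DisjointBallsInCC} produces $h_n$ that work for every center and the radius is arbitrary, both choices are equally valid.
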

\begin{proof}
We apply Lemma \ref{DisjointBallsInCC} for $R=d_{{\cal C}(\Sigma)}(\alpha,\beta)$. It is a straightforward
excercise to confirm that the sequence of outer automorphisms $(\rho_n)_{n < \omega}$ corresponding to the sequence of mapping 
classes $(h_n)_{n < \omega}$ given 
by Lemma \ref{DisjointBallsInCC} satisfies the conclusion. 
\end{proof}




\section{Forking over big sets} \label{FreelyIndecSec}
In this section we bring results from previous sections together in order to prove Theorem \ref{FreelyIndecIntro}. 

We start with a lemma that connects forking independence with the modular group of a torsion-free hyperbolic group 
concrete with respect to a set of parameters. 
 
\begin{lemma} \label{WorkInStandardModel} 
Let $G$ be a torsion-free hyperbolic group, and let $A$ be a subset of $G$ with respect to which $G$ is concrete. 
Let $\bar{b},\bar{c}$ be tuples in $G$.
Suppose that the orbit $\Mod_{A\bar{c}}(G).\bar{b}$ is preserved by $\Mod_A(G)$. Then $\bar{b} \underset{A}{\forkindep} \bar{c}$. 
\end{lemma}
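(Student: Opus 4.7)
The plan is to combine the atomicity of $G$ over $A$ (which follows from Theorem \ref{TypesIsolated}, since $G$ is concrete over $A$) with the forking criterion of Lemma \ref{AtoFork}. By that lemma, to obtain $\bar{b} \underset{A}{\forkindep} \bar{c}$ it is enough to exhibit, for each formula $\phi(\bar{x},\bar{c})$ in $\tp^{G}(\bar{b}/A\bar{c})$, a non-empty almost $A$-invariant subset of $\phi(G,\bar{c})$. The natural candidate is the orbit $X := \Mod_A(G) \cdot \bar{b}$.

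First I would verify that $X$ is contained in $\phi(G,\bar{c})$. The hypothesis that $\Mod_{A\bar{c}}(G) \cdot \bar{b}$ is preserved by $\Mod_A(G)$, applied to $\bar{b}$ itself, yields the inclusion $\Mod_A(G) \cdot \bar{b} \subseteq \Mod_{A\bar{c}}(G) \cdot \bar{b}$. Since every element of $\Mod_{A\bar{c}}(G)$ fixes $A\bar{c}$ pointwise, the right-hand side consists entirely of tuples sharing $\tp^G(\bar{b}/A\bar{c})$ with $\bar{b}$, hence satisfying $\phi(\bar{x},\bar{c})$.

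Next I would check that $X$ is almost $A$-invariant, using the fact recalled in the previous section that $\Mod_A(G)$ has finite index in $\Aut_A(G)$. Picking coset representatives $f_1,\ldots,f_k$ of $\Mod_A(G)$ in $\Aut_A(G)$, any $f \in \Aut_A(G)$ can be written as $f = f_i \sigma$ with $\sigma \in \Mod_A(G)$, and then $f(X) = f_i(\sigma(X)) = f_i(X)$, since $X$ is $\Mod_A(G)$-invariant. Hence $X$ has at most $k$ images under $\Aut_A(G)$, so Lemma \ref{AtoFork} applies and shows that $\phi(\bar{x},\bar{c})$ does not fork over $A$. As $\phi$ was arbitrary in $\tp^G(\bar{b}/A\bar{c})$, this gives $\bar{b} \underset{A}{\forkindep} \bar{c}$.

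I do not expect any real obstacle here: the lemma is essentially a packaging result translating the modular-group hypothesis into the shape required by Lemma \ref{AtoFork}. The only mildly subtle point is to read ``preserved'' as ``mapped into itself'', which is what makes $\Mod_A(G)\cdot\bar{b}$ fit inside $\Mod_{A\bar{c}}(G)\cdot\bar{b}$ without any appeal to normality of $\Mod_A(G)$ inside $\Aut_A(G)$.
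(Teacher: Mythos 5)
Your proposal is correct and follows essentially the same route as the paper's proof: obtain an almost $A$-invariant set from the hypothesis together with the finite index of $\Mod_A(G)$ in $\Aut_A(G)$, then apply Lemma \ref{AtoFork} (using atomicity over $A$, which comes from Theorem \ref{TypesIsolated}). The only cosmetic difference is that you take $\Mod_A(G)\cdot\bar{b}$ as the almost $A$-invariant witness and use the hypothesis to place it inside each $\phi(G,\bar{c})$, whereas the paper takes $\Mod_{A\bar{c}}(G)\cdot\bar{b}$ (automatically inside $\phi(G,\bar{c})$) and uses the hypothesis to obtain its $\Mod_A(G)$-invariance, packaging the reduction via the defining formula of the $\Aut_{A\bar{c}}(G)$-orbit and Remark \ref{ForkImpl}.
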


\begin{proof} 
Let $X:=\Aut_{A\bar{c}}(G).\bar{b}$. By Proposition \ref{TypesIsolated}, $X$ is definable over $A\bar{c}$. 
By Remark \ref{ForkImpl}, since 
$X$ implies every other formula in $tp(\bar{b}/A\bar{c})$ it is enough to prove that $X$ does not fork over $A$.

Now $\Mod_{A\bar{c}}(G).\bar{b}$ is a nonempty subset of $X$ preserved by $\Mod_A(G)$: since $\Mod_A(G)$ has finite index in $\Aut_A(G)$, 
this subset is almost $A$-invariant: by Lemma \ref{AtoFork}, we get the result.  
\end{proof}

%


We can now state and prove the second main result of the paper. 

\begin{thm} \label{FreelyIndec} Let $G$ be a torsion-free hyperbolic group, and 
let $A$ be a subset of $G$ with respect to which $G$ is concrete.

Let $(\Lambda, v_A)$ be the pointed cyclic JSJ decomposition of $G$ with respect to $A$. Let $\bar{b}$ and $\bar{c}$ be tuples of $G$, 
and denote by $\Lambda_{A\bar{b}}$ (respectively $\Lambda_{A\bar{c}}$) the minimal subgraphs of groups of 
$\Lambda$ whose fundamental group contains the subgroups $\langle A, \bar{b}\rangle$ (respectively $\langle A, \bar{c} \rangle$) of $G$.

Then $\bar{b}$ and $\bar{c}$ are independent over $A$ if and only if each connected component of 
$\Lambda_{A\bar{b}} \cap \Lambda_{A\bar{c}}$ contains at most one non Z-type vertex, and such a vertex is of 
non surface type.
\end{thm}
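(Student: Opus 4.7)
The proof splits into two implications, each calling on different machinery developed in the earlier sections.

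For the $(\Leftarrow)$ direction, I would invoke Lemma \ref{WorkInStandardModel}: it suffices to show that $\Mod_{A\bar{c}}(G)\cdot\bar{b}$ is preserved under $\Mod_A(G)$. Given $\phi\in\Mod_A(G)$, Proposition \ref{EquivalentDefOfMod} together with Lemma \ref{NormalFormMod} let me write $\phi=\Conj(z)\circ\rho_1\circ\cdots\circ\rho_r$ with $\rho_i$ elementary automorphisms associated to the pointed JSJ tree and $z\in C(A)$. I would classify each $\rho_i$ according to the position of $\Supp(\rho_i)$: if the support lies outside $\Lambda_{A\bar{c}}$, Lemma \ref{ElemMOD} (with $H=\langle A,\bar{c}\rangle$) rewrites $\rho_i$ as a conjugation composed with an element of $\Mod_{A\bar{c}}(G)$; if it lies in $\Lambda_{A\bar{c}}\setminus\Lambda_{A\bar{b}}$, the same lemma applied with $H=\langle A,\bar{b}\rangle$ shows that $\rho_i$ only conjugates $\bar{b}$, hence contributes a purely inner factor to $\phi(\bar{b})$. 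The delicate case is when $\Supp(\rho_i)\subseteq\Lambda_{A\bar{b}}\cap\Lambda_{A\bar{c}}$: the intersection hypothesis forbids surface-type supports, so $\rho_i$ is a Dehn twist about an edge $e$ whose unique non-Z endpoint $w$ is the only non-Z vertex in its intersection component. Exploiting the star geometry of that component together with Remark \ref{DehnTwistFixingAGivenVertex} and the 2-acylindricity from Remark \ref{AcylindricalAndUnivCompatible}, I would normalize $\tau_e$ to fix $\Stab(w)$ pointwise and match its action on $\bar{b}$ by an element of $\Mod_{A\bar{c}}(G)$. Reassembling via the commutation-up-to-conjugation of Proposition \ref{ElementaryAMCommute} then yields the desired $\psi\in\Mod_{A\bar{c}}(G)$ with $\psi(\bar{b})=\phi(\bar{b})$.

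For the $(\Rightarrow)$ direction I would argue contrapositively, using Lemma \ref{ForkAlg} to pass freely to algebraic closures, which by Propositions \ref{MinSubtreeAndAlgClos} and \ref{RigidTranslates} contain explicit stabilizers and surface-curve elements coming from the JSJ structure. Assume a component $K$ of $\Lambda_{A\bar{b}}\cap\Lambda_{A\bar{c}}$ violates the condition. If $K$ contains a surface-type vertex $v^*$ with surface $\Sigma$ and stabilizer $S=\pi_1(\Sigma)$, Proposition \ref{MinSubtreeAndAlgClos} yields elements $\gamma_{\bar{b}}\in\acl_G(A\bar{b})\cap S$ and $\gamma_{\bar{c}}\in\acl_G(A\bar{c})\cap S$ representing non boundary-parallel simple closed curves on $\Sigma$. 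Lemma \ref{sccfork} then produces a sequence $(\rho_n)_{n<\omega}\subset\Out(S)$ whose $\Out_{[\gamma_{\bar{c}}]}(S)$-orbits of $\rho_n([\gamma_{\bar{b}}])$ are pairwise distinct. Lifting each $\rho_n$ to a vertex automorphism $\Phi_n\in\Mod_A(G)$ at $v^*$ gives a sequence $(\Phi_n(\bar{b}))_n$ of realizations of $\tp(\bar{b}/A)$ lying in pairwise distinct $\Mod_{A\bar{c}}(G)$-orbits; combined with the isolating formula from Theorem \ref{TypesIsolated}, this directly witnesses that $\tp(\bar{b}/A\bar{c})$ forks over $A$.

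If instead $K$ contains two non-Z-type vertices $w,w'$, each rigid or equal to the basepoint, then Proposition \ref{RigidTranslates} places both stabilizers in $\acl_G(A\bar{b})\cap\acl_G(A\bar{c})$. Iterated Dehn twists about the edges adjacent to the Z-type vertices lying on the path from $w$ to $w'$ inside $K$ then produce the required infinite family of pairwise distinct $\Mod_{A\bar{c}}(G)$-orbits of $\bar{b}$, again witnessing forking.

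The principal obstacle will be the surface-vertex case of the $(\Rightarrow)$ direction: one must verify that vertex automorphisms extending mapping classes of $\Sigma$ genuinely lift into $\Mod_A(G)$ without disturbing the fixing of $A$, and that the pairwise curve-complex separation of Lemma \ref{sccfork} translates faithfully into distinctness of $\Mod_{A\bar{c}}(G)$-orbits in $G$. This requires a careful use of the normal form Proposition \ref{NormalFormOfAMFixingH} to control precisely how an element of $\Mod_{A\bar{c}}(G)$ can act on the surface subgroup $S$, and to pin down the action on the curve element $\gamma_{\bar{b}}\in\acl_G(A\bar{b})$.
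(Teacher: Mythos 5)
The skeleton of your proposal (Lemma \ref{WorkInStandardModel} for the "if" direction, algebraic closure and the curve complex for the "only if" direction) matches the paper's, but there are several genuine gaps.

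In the $(\Leftarrow)$ direction, your "delicate case" --- a Dehn twist about an edge $e$ lying in $\Lambda_{A\bar{b}} \cap \Lambda_{A\bar{c}}$ --- is resolved in the paper not by a normalization via Remark \ref{DehnTwistFixingAGivenVertex} and $2$-acylindricity, but by the cylinder relation of Lemma \ref{RelationDehnTwists}: since the intersection meets each cylinder of $T$ in at most one edge orbit (this is the content of the one-non-$Z$-vertex hypothesis), a Dehn twist about $e$ is, up to conjugation, a product of Dehn twists about the \emph{other} edges of $e$'s cylinder, and those lie outside the intersection. Your "star geometry" plan neither identifies this relation nor explains why the resulting twist can be matched by an element of $\Mod_{A\bar{c}}(G)$; indeed a twist about an edge genuinely in both subgraphs cannot simply be "normalized to fix $\Stab(w)$" since $\Stab(w)$ need not absorb the twist. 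You also omit the paper's separate treatment of the residual conjugation $\Conj(z)$ when $A$ is cyclic and $z \in C(A)$ is nontrivial, which requires constructing a specific product of Dehn twists about the edges of the cylinder of $v_A$.

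In the $(\Rightarrow)$ direction, the paper splits into three cases, and your proposal misses one entirely: the case of a single edge $e = v_A x$ with $v_A$ the basepoint and $x$ a rigid non-cyclically-stabilized vertex. Here there is \emph{no} $Z$-type vertex between $w$ and $w'$, so your plan to do "iterated Dehn twists about the edges adjacent to the $Z$-type vertices lying on the path" has no edges to twist about. The paper handles this case (Lemma \ref{IntersectInBasepointEdgeFork}) purely via algebraic closure: $\Stab(x)$ lies in $\acl_G(A\bar{b}) \cap \acl_G(A\bar{c})$ but not in $\acl_G(A)$, which contradicts independence via Lemma \ref{ForkAlg}. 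Moreover, in both your surface and two-rigid-vertex cases you argue that the constructed translates lie in distinct $\Mod_{A\bar{c}}(G)$-orbits, but the set whose translates must be shown disjoint is the $\Aut_{A\bar{c}}(G)$-orbit $X$ (the isolating formula from Theorem \ref{TypesIsolated} defines an $\Aut$-orbit, not a $\Mod$-orbit). Since $\Mod_{A\bar{c}}(G)$ has only finite index in $\Aut_{A\bar{c}}(G)$, the paper must pick coset representatives $\phi_1, \ldots, \phi_l$ and build $\tau$ (resp.\ $\rho_n$) as a product over all the translated supports $f_j(e)$ (resp.\ $f_j(v)$), then carefully separate conjugacy classes using the normal form of Proposition \ref{NormalFormOfAMFixingH}. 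Your proposal does not account for this finite-index complication.
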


Note that since free groups are concrete over any set of parameters with respect to which they are freely indecomposable, 
Theorem \ref{FreelyIndecIntro} is a corollary of this result. 

\begin{rmk} We note that by Corollary \ref{FinGenAlg} coupled with 
Lemma \ref{ForkAlg}, there exists a finitely generated subgroup $A_0$ of 
$A$ such that tuples $\bar{b}$ and $\bar{c}$ fork over $A$ if and only if they fork over $A_0$. 
This easily extends to any finitely generated subgroup of $A$ which is "sufficiently large" (i.e. which contains $A_0$) .

On the other hand, by Proposition 3.6 in \cite{PerinSklinosHomogeneity}, 
there exists a finitely generated subgroup $A_0$ of $A$ such that the minimal subgraph 
of groups of $\Lambda$ whose fundamental group contains 
$\langle A, \bar{b} \rangle$ (respectively $\langle A, \bar{c} \rangle$) is the same as the the minimal 
subgraph of groups whose fundamental group contains $\langle A_0, \bar{b} \rangle$ 
(respectively $\langle A_0, \bar{c} \rangle$). Again this extends to any "sufficiently large" finitely generated subgroup of $A$.

Thus, in proving Theorem \ref{FreelyIndec},
we can always assume that the set of parameters is a finitely generated group.
\end{rmk}

We first prove the "if" direction.
\begin{lemma} 
Assume we are in the setting of Theorem \ref{FreelyIndec}.

Suppose that $\Lambda_{A\bar{b}} \cap \Lambda_{A\bar{c}}$ contains at most one 
non Z-type vertex, and such a vertex is of non surface type.
Then $\bar{b} \underset{A}{\forkindep} \bar{c}$. 
\end{lemma}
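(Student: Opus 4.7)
The plan is to invoke Lemma \ref{WorkInStandardModel}: it suffices to show that for every $\sigma \in \Mod_A(G)$ we have $\sigma(\bar{b}) \in \Mod_{A\bar{c}}(G) \cdot \bar{b}$. Fixing such a $\sigma$, I would write it in the normal form of Lemma \ref{NormalFormMod} with respect to the pointed cyclic JSJ tree $T$ of $G$ relative to $A$, expressing $\sigma$ as an inner automorphism composed with a product of elementary automorphisms, i.e. Dehn twists about edges of $T$ and vertex automorphisms at surface-type vertices. Write $T_{A\bar{b}}$ and $T_{A\bar{c}}$ for the minimal subtrees of $\langle A,\bar{b}\rangle$ and $\langle A,\bar{c}\rangle$ in $T$.

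I would then classify each elementary factor by the position of its support. A factor whose support is outside $T_{A\bar{c}}$ (respectively outside $T_{A\bar{b}}$) acts by conjugation on $\langle A,\bar{c}\rangle$ (respectively on $\langle A,\bar{b}\rangle$) by Lemmas \ref{DisjointDehnTwistFixes} and \ref{DisjointVertexAMFixes}; Lemma \ref{ElemMOD} then lets us rewrite each such factor, up to an inner automorphism, as an element of $\Mod_{A\bar{c}}(G)$ (respectively as an automorphism fixing $\bar{b}$ pointwise). The remaining factors have support in $T_{A\bar{b}} \cap T_{A\bar{c}}$, and by the hypothesis they must all be Dehn twists about edges, since no surface-type vertex lies in the intersection.

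The crucial step is to rewrite these remaining Dehn twists in terms of the previous two cases. Given such a Dehn twist $\tau_e$, the edge $e$ is adjacent to a Z-type vertex $z$ of $T$, since the tree of cylinders is bipartite. Let $e = e_1, e_2, \ldots, e_r$ be all edges of $T$ adjacent to $z$. If some $e_i$ with $i \geq 2$ were also in $T_{A\bar{b}} \cap T_{A\bar{c}}$, its endpoint distinct from $z$ would be a second non-Z-type vertex in the connected component of the intersection containing the non-$z$ endpoint of $e$, contradicting the hypothesis. Hence $e_2, \ldots, e_r$ lie outside the intersection and fall under one of the previous cases. Applying the relation $\tau_{e_1} \circ \tau_{e_2} \circ \cdots \circ \tau_{e_r} = \Conj(a^{r-1})$ of Lemma \ref{RelationDehnTwists} then rewrites $\tau_{e_1}$ as an inner automorphism composed with Dehn twists of the desired form.

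Finally, using Proposition \ref{ElementaryAMCommute} to commute elementary factors up to conjugation, I would gather all factors fixing $\bar{c}$ on the left and all factors fixing $\bar{b}$ on the right, collecting the accumulated inner automorphisms into a single $\Conj(w)$, to obtain $\sigma = \Conj(w) \circ \alpha \circ \beta$ with $\alpha \in \Mod_{A\bar{c}}(G)$ and $\beta$ fixing $\bar{b}$; this gives $\sigma(\bar{b}) = w \cdot \alpha(\bar{b}) \cdot w^{-1}$. The main obstacle will be to verify that $\Conj(w) \circ \alpha$ lies in $\Mod_{A\bar{c}}(G)$. I expect this to follow from careful bookkeeping: the last statement of Lemma \ref{NormalFormMod}, together with Remarks \ref{DehnTwistFixingAGivenVertex} and \ref{VertexAMFixingAGivenVertex}, allows us to choose representatives so that every conjugation element introduced by Lemma \ref{ElemMOD} or by the commutation step centralizes $\Stab(v_A)$, and consequently that $w$ centralizes $\langle A,\bar{c}\rangle$. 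Since $\Mod_H(G) = \Aut^T_H(G)$ contains the inner automorphisms by elements of $C_G(H)$, this yields $\Conj(w) \circ \alpha \in \Mod_{A\bar{c}}(G)$ and hence $\sigma(\bar{b}) \in \Mod_{A\bar{c}}(G) \cdot \bar{b}$, as required.
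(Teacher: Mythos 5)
Your proposal follows the paper's own route quite closely: invoke Lemma \ref{WorkInStandardModel}, expand a modular automorphism $\theta \in \Mod_A(G)$ in the normal form of Lemma \ref{NormalFormMod}, sort the elementary factors by whether their supports avoid translates of $T_{A\bar{c}}$ or of $T_{A\bar{b}}$, use Lemma \ref{RelationDehnTwists} to dispose of Dehn twists whose support lies in the overlap, and then rearrange via Lemma \ref{ElemMOD} and Proposition \ref{ElementaryAMCommute}. Up to that point your reasoning essentially reproduces the paper's.

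The gap is in the final step, where you need to absorb the accumulated inner automorphism $\Conj(w)$ into $\Mod_{A\bar{c}}(G)$. You argue that careful bookkeeping will show $w$ centralizes $\Stab(v_A)$ and "consequently" that $w$ centralizes $\langle A,\bar{c}\rangle$. That last inference does not hold: $\bar{c}$ need not lie in $\Stab(v_A)$, and $w \in C_G(\Stab(v_A))$ (which is at most $C_G(A)$) gives no control over $w$'s action on $\bar{c}$. Indeed, after your rearrangement $\sigma = \Conj(w)\circ\alpha\circ\beta$, the only constraint coming from $\sigma$ fixing $A$ pointwise (together with the factors fixing $A$) is $w \in C_G(A)$; there is no reason for $w$ to commute with $\bar{c}$, and in general it will not. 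So $\Conj(w)$ need not lie in $\Aut_{A\bar{c}}(G)$, let alone in $\Mod_{A\bar{c}}(G)$.

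The paper's fix is different: first observe that since $\theta$ and all the $\rho_j$ fix $A$ pointwise, the conjugating element $z$ in the normal form is either trivial or, if $A$ is cyclic, lies in $C(A) = \Stab(v_A)$. If $z$ is trivial, nothing more is needed. If $A$ is cyclic, one does not try to show $\Conj(z)\in\Mod_{A\bar{c}}(G)$ (which is generally false); instead one explicitly constructs $\tau\in\Mod_{A\bar{c}}(G)$ with the weaker property $\tau(\bar{b})=\Conj(z)(\bar{b})$, by taking a product of Dehn twists by $z$ about the edges of $T^1$ that lie in the cylinder containing $v_A$ but outside $T_{A\bar{c}}$ (which belong to $\Mod_{A\bar{c}}(G)$ by Lemma \ref{DisjointDehnTwistFixes}, and which collectively act as $\Conj(z)$ on $\bar b$ via Lemma \ref{RelationDehnTwists}). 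Then $\alpha = \rho_1\circ\ldots\circ\rho_r\circ\tau$ does the job. You should replace your final bookkeeping claim with this construction.
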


\begin{proof}Let $(T, v_A)$ be the pointed cyclic JSJ tree of $G$ with respect to $A$.
Let $T_{A\bar{b}}$ (respectively $T_{A\bar{c}}$) denote the minimal subtree of  the subgroup of 
$G$ generated by $A$ and $\bar{b}$ 
(respectively $A$ and $\bar{c}$) in $T$. 
Note that by definition $v_A$ lies in both $T_{A\bar{b}}$ and $T_{A\bar{c}}$.

Choose a Bass-Serre presentation 
$(T^1, T^0, (t_f)_f)$ for $G$ with respect to $\Lambda$ such that $v_A \in T^0$.

By Lemma \ref{WorkInStandardModel}, it is enough to show that $\Mod_{A\bar{c}}(G).\bar{b}$ 
is preserved by $\Mod_{A}(G)$. For this, it is enough to show that for any 
$\theta$ in $\Mod_A(G)$, we can find $\alpha$ in $\Mod_{A\bar{c}}(G)$ such that $\theta(\bar{b}) = \alpha(\bar{b})$. 

By Lemma \ref{NormalFormMod}, $\theta$ can be written as a product of the form
$$ \Conj(z) \circ \rho_1 \circ \ldots \circ \rho_t$$
where the $\rho_j$ fix $A$ pointwise, are Dehn twists about distinct edges of $T^1$ or 
vertex automorphisms associated to distinct surface type vertices of $T^0$. 

The hypothesis on $\Lambda_{A\bar{b}} \cap \Lambda_{A\bar{c}}$ implies that the intersection of 
$\bigcup_{g\in G} g \cdot T_{A\bar{b}} $ with $\bigcup_{h\in G} h \cdot T_{A\bar{c}} $ contains no surface 
type vertex. Also, it implies that this intersection meets at most one orbit of edge of each cylinder of $T$.

In this light, we may assume that the supports of the $\rho_j$ lie outside of 
$\bigcup_{g\in G} g \cdot T_{A\bar{b}} \cap \bigcup_{h\in G} h \cdot T_{A\bar{c}}$. 
Indeed, suppose that $\Supp(\rho_j)$ is in $g \cdot T_{A\bar{b}} \cap h \cdot T_{A\bar{c}}$: 
it must be an edge by the remark above. By Lemma \ref{RelationDehnTwists} it can be replaced by a 
product of a conjugation and Dehn twists whose supports are edges in the same cylinder which 
are not in the orbit of $e$: they must lie outside of $\bigcup_{g\in G} g \cdot T_{A\bar{b}} \cap \bigcup_{h\in G} h \cdot T_{A\bar{c}}$.

Since for each $j$ we have that either $\rho_j$ does not belong to any translate of $T_{A\bar{b}}$ or it 
does not belong to any translate of $T_{A\bar{c}}$, we may assume (using Lemma \ref{ElemMOD} and Proposition \ref{ElementaryAMCommute}) 
that there exists $r$ such that $\rho_i\in Mod_{A\bar{c}}$ for any $i\leq r$ and $\rho_j\in Mod_{A\bar{b}}$ for any $j>r$. 

Also observe that since $\theta$ and each $\rho_j$ fix $A$, we have that either $z$ is trivial or $A$ is cyclic and 
$z\in C(A)$. In the first case we can take $\alpha$ to be $\rho_1\circ\ldots\circ\rho_r$. 

In the second case we let $\tau$ be the product of the Dehn twists by $z$ about the edges of 
$T^1$ which are in the unique cylinder containing $v_A$, but do not lie in $T_{A\bar{c}}$. 
Then $\tau$ satisfies $\tau(\bar{b}) = \Conj(z)(\bar{b})$, and lies in $\Mod_{A\bar{c}}(G)$. 
Thus we can take $\alpha$ to be $\rho_1 \circ \ldots \circ \rho_r \circ  \tau$.
\end{proof}

To prove the second direction of Theorem \ref{FreelyIndec}, it 
is enough to consider the following three cases: $(i)$ for some $g$, the intersection $T_{A\bar{b}} \cap g \cdot T_{A\bar{c}}$ 
contains a surface type vertex, $(ii)$ for some $g, h, h'$, there are edges from distinct orbits $e=xz$ and $e'=yz$ contained in the intersection
$T_{A\bar{b}} \cap h \cdot T_{A\bar{c}}$ and $g \cdot T_{A\bar{b}} \cap h' \cdot T_{A\bar{c}}$ 
respectively, where each of $x$ and $y$ is either the basepoint, 
or non cyclically stabilized of rigid type, and $(iii)$ for some $g$, 
the intersection $T_{A\bar{b}} \cap g \cdot T_{A\bar{c}}$ contains an edge $e = v_Ax$ where $v_A$ is the basepoint, 
and $x$ is non cyclically stabilized of rigid type.

The following lemma deals with the latter case. Note that in this case, $A$ is cyclic. 
\begin{lemma} \label{IntersectInBasepointEdgeFork} Assume we are in the setting of Theorem \ref{FreelyIndec}. 
Let $T_{A\bar{b}}$ (respectively $T_{A\bar{c}}$) denote the minimal subtree of 
$\langle A, \bar{b} \rangle$ (respectively $\langle A, \bar{c} \rangle$) in $T$. 

Suppose that there exists an element $g$ such that the intersection $T_{A\bar{b}} \cap g \cdot T_{A\bar{c}}$ 
contains an edge $e = v_Ax$ where $v_A$ is the basepoint, and $x$ is a non 
cyclically stabilized vertexof rigid type.

Then $\bar{b}$ forks with $\bar{c}$ over $A$.
\end{lemma}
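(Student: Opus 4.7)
The plan is to derive a contradiction from the hypothesis $\bar{b}\underset{A}{\forkindep}\bar{c}$ by exhibiting an element of $\acl^{eq}_G(A\bar{b})\cap\acl^{eq}_G(A\bar{c})$ that does not lie in $\acl^{eq}_G(A)$; in a stable theory this is incompatible with independence, since $\bar{b}\underset{A}{\forkindep}\bar{c}$ forces $\acl^{eq}_G(A\bar{b})\cap\acl^{eq}_G(A\bar{c}) = \acl^{eq}_G(A)$.

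First I would record the structural consequences of the hypothesis. Since $v_A$ is adjacent in $T$ to the non-cyclically stabilized vertex $x$, the construction of the pointed cyclic JSJ tree forces us into the ``cyclic basepoint'' case: $A$ is cyclic, $\Stab(v_A)=\Stab(e)=C(A)$, and $C(A)\subseteq U:=\Stab(x)$ with $U$ non-cyclic. By Proposition \ref{AlgClosure} together with the fact that algebraic closures in torsion-free hyperbolic groups are closed under taking roots, $\acl_G(A) = C(A)$. Then Proposition \ref{MinSubtreeAndAlgClos} applied to $H=\langle A,\bar{b}\rangle$ (with $x$ a rigid, non-cyclically stabilized vertex of $T_{A\bar{b}}$) gives $U\subseteq \acl_G(A\bar{b})$; applied to $H=\langle A,\bar{c}\rangle$ (with $g^{-1}\cdot x$ a rigid, non-cyclically stabilized vertex of $T_{A\bar{c}}$) it gives $g^{-1}Ug\subseteq \acl_G(A\bar{c})$.

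The central step is to show that $g$ itself may be taken to lie in $\acl_G(A\bar{c})$. The idea is as follows: since $\Lambda_{A\bar{c}}$ is a finite graph of groups, the edges of $T_{A\bar{c}}$ whose image in $\Lambda$ equals the image of $e$ form finitely many $\langle A,\bar{c}\rangle$-orbits. Choosing once and for all an $A\bar{c}$-canonical system of representatives $f_1,\ldots,f_r$ for these orbits and multiplying $g$ on the right by a suitable element of $\langle A,\bar{c}\rangle\subseteq \dcl_G(A\bar{c})$, we may assume $g^{-1}\cdot e=f_i$ for some $i$. The element $g$ is then determined up to right multiplication by $\Stab(e)=C(A)\subseteq \acl_G(A)\subseteq \acl_G(A\bar{c})$, whence $g\in \acl_G(A\bar{c})$.

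Once $g\in \acl_G(A\bar{c})$, the subgroup $U = g\cdot(g^{-1}Ug)\cdot g^{-1}$ lies in $\acl_G(A\bar{c})$ as well, so $U\subseteq \acl_G(A\bar{b})\cap \acl_G(A\bar{c})$. Since $U$ is non-cyclic whereas $C(A)=\acl_G(A)$ is cyclic, one can pick $\bar{u}\in U\setminus C(A)$, producing the desired element of $\acl_G(A\bar{b})\cap \acl_G(A\bar{c})\setminus \acl_G(A)$, which contradicts $\bar{b}\underset{A}{\forkindep}\bar{c}$. The main obstacle is the third step: establishing that $g$ can be chosen in $\acl_G(A\bar{c})$ requires careful Bass--Serre bookkeeping, namely controlling how the $G$-orbit of $e$ meets $T_{A\bar{c}}$ and making an $A\bar{c}$-definable choice of representative edge so that the remaining ambiguity in $g$ is absorbed by $C(A)$.
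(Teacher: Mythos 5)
Your opening moves track the paper's argument closely and are correct: in the pointed cyclic JSJ tree, $v_A$ adjacent to a non-cyclically stabilized vertex forces $A$ cyclic, $\Stab(v_A)=\Stab(e)=C(A)$, and $C(A)\subseteq U:=\Stab(x)$ with $U$ non-cyclic; $\acl_G(A)=C(A)$; Proposition~\ref{MinSubtreeAndAlgClos} applied to $\langle A,\bar b\rangle$ gives $U\subseteq\acl_G(A\bar b)$, and applied to $\langle A,\bar c\rangle$ at the vertex $g^{-1}\cdot x\in T_{A\bar c}$ gives $g^{-1}Ug\subseteq\acl_G(A\bar c)$. The closing step, deducing forking from a non-cyclic subgroup lying in $\acl_G(A\bar b)\cap\acl_G(A\bar c)$ but not in $\acl_G(A)$, is also sound and is essentially what the paper does.

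The gap is the central step, and it is not merely a matter of ``careful Bass--Serre bookkeeping''; the inference as written is invalid. You conclude ``$g$ is determined up to right multiplication by $\Stab(e)=C(A)\subseteq\acl_G(A\bar c)$, whence $g\in\acl_G(A\bar c)$.'' But being determined up to an \emph{infinite} subgroup of $\acl_G(A\bar c)$ does not place $g$ in $\acl_G(A\bar c)$: the coset $gC(A)$ is infinite, so this controls the orbit $\Aut_{A\bar c}(G)\cdot g$ only up to an infinite set. (Compare: in $\F_n$, $e_1$ is ``determined up to $\langle e_1\rangle$'' yet $\acl(\emptyset)$ is trivial.) Tracing the action more carefully only makes this worse: any $\phi\in\Aut_{A\bar c}(G)$ induces a tree automorphism $f$ fixing $v_A$ and $e$ and preserving $T_{A\bar c}$, and one gets $\phi(g)^{-1}\cdot e=f(g^{-1}\cdot e)\in T_{A\bar c}$, which pins $\phi(g)$ down only to a union of finitely many double cosets $C(A)\,g'\,\langle A,\bar c\rangle$ --- an infinite set. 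There is also a smaller slip: normalizing $g^{-1}\cdot e=f_i$ leaves $g$ ambiguous up to right multiplication by $\Stab(f_i)=g^{-1}C(A)g$, not by $\Stab(e)$. And the phrase ``$A\bar c$-canonical system of representatives'' is doing a lot of unjustified work: there is no reason such representatives can be chosen $\Aut_{A\bar c}(G)$-invariantly.

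The paper avoids the issue entirely by \emph{not} transporting $g^{-1}Ug$ back across $g$. Instead it invokes Proposition~\ref{RigidTranslates} with $H=\langle A,\bar c\rangle$ and $v=x$: the hypothesis $e\in g\cdot T_{A\bar c}$ is precisely what is needed to check that the path from $T_{A\bar c}$ to $x$ consists of edges lying in translates of $T_{A\bar c}$ and avoids surface-type vertices, giving $\Stab(x)=U\subseteq\acl_G(A\bar c)$ directly. Replacing your step on $g$ by this application repairs the argument.
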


\begin{proof} 
By definition of the pointed cyclic JSJ tree, since $v_A$ is at distance $1$ of a non 
cyclically stabilized vertex we must have that $A$ is cyclic and $e$ is the unique edge adjacent to $v_A$. 
Now $v_A$ and $e$ are stabilized by the centralizer $C(A)$ of $A$, and the stabilizer of $x$ is not cyclic.

By Proposition \ref{MinSubtreeAndAlgClos} $\Stab(x)\subseteq acl_G(A\bar{b})$ and by Proposition \ref{RigidTranslates} 
$\Stab(x)\subseteq acl_G(A\bar{c})$, but $Stab(x)\not\subseteq acl_G(A)$. This implies that $\bar{b}$ forks with $\bar{c}$ over $A$. 

 

\end{proof}

We can now show that if we are in case $(ii)$, the tuples $\bar{b}$ and $\bar{c}$ fork over $A$.
\begin{lemma} \label{IntersectInEdgeFork} Assume we are in the setting of Theorem \ref{FreelyIndec}.
Let $T_{A\bar{b}}$ 
(respectively $T_{A\bar{c}}$) denote the minimal subtree of $\langle A, \bar{b} \rangle$ (respectively $\langle A, \bar{c} \rangle$) in $T$.  

Suppose that there exist elements $g, h, h'$ and edges from distinct orbits $e=xz$ and $e'=yz$ contained in the intersection
$T_{A\bar{b}} \cap h \cdot T_{A\bar{c}}$ and $g \cdot T_{A\bar{b}} \cap h' \cdot T_{A\bar{c}}$ respectively, where $x$ and $y$ are 
non cyclically stabilized vertices which are either the basepoint, or of rigid type.

Then $\bar{b}$ and $\bar{c}$ fork over $A$.
\end{lemma}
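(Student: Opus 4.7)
The plan is to produce a non-trivial element $u$ in $\acl_G(A\bar{b}) \cap \acl_G(A\bar{c}) \setminus \acl_G(A)$ and then appeal to Lemma \ref{ForkAlg}. The argument mirrors that of Lemma \ref{IntersectInBasepointEdgeFork} but requires a more delicate analysis because the edges $e,e'$ now lie away from the basepoint. First I would verify that the shared vertex $z$ of $e=xz$ and $e'=yz$ is a Z-type vertex: the bipartite nature of the tree of cylinders forces $z$ to be cyclically stabilized (since $x,y$ are not), and $z\neq v_A$, because otherwise we would have $A$ cyclic and $v_A$ adjacent to a single $G$-orbit of edges by construction of the pointed cyclic JSJ tree, contradicting that $e$ and $e'$ lie in distinct orbits.

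Next, I would extract algebraic closure data using Propositions \ref{MinSubtreeAndAlgClos}, \ref{RigidTranslates}, and \ref{AlgClosure} (the last covering the case where $x$ or $y$ equals $v_A$, which forces $A$ to be non cyclic). From $x,z\in T_{A\bar{b}}$ and $g^{-1}y,g^{-1}z\in T_{A\bar{b}}$, we obtain that $\Stab_G(x)$, $g^{-1}\Stab_G(y)g$, $\Stab_G(z)$, and $g^{-1}\Stab_G(z)g$ all lie in $\acl_G(A\bar{b})$. Analogously, using $h,h'$ in place of $1,g$, the conjugates $h^{-1}\Stab_G(x)h$, $h'^{-1}\Stab_G(y)h'$, $h^{-1}\Stab_G(z)h$, and $h'^{-1}\Stab_G(z)h'$ lie in $\acl_G(A\bar{c})$. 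The candidate common element is a generator $u_0$ of $\Stab_G(e)\cap \Stab_G(e')$: this intersection is a non-trivial subgroup of the infinite cyclic group $\Stab_G(z)$, and $u_0\in \Stab_G(x)\cap \Stab_G(y)\cap \Stab_G(z)$, so immediately $u_0\in \acl_G(A\bar{b})$.

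The main obstacle, where the hypothesis of two edges in distinct orbits must be genuinely used, is to place $u_0$ itself (and not merely a conjugate of it) in $\acl_G(A\bar{c})$. A priori we only have $h^{-1}u_0h$ and $h'^{-1}u_0h'$ in $\acl_G(A\bar{c})$. To bridge the gap, I would follow the blueprint of the previous lemma and invoke Proposition \ref{RigidTranslates} with $H=\langle A,\bar{c}\rangle$ for the vertex $x$ (or for $y$). This requires exhibiting a path from $T_{A\bar{c}}$ to $x$ whose edges lie in translates of $T_{A\bar{c}}$ and which avoids surface type vertices. The fact that $x$ is reachable from $T_{A\bar{c}}$ via the translate $hT_{A\bar{c}}$, combined with the $2$-acylindricity of the pointed cyclic JSJ tree and the additional rigidity imposed by the second edge $e'$ (which forces $h^{-1}z$ and $h'^{-1}z$ to be specific vertices of $T_{A\bar{c}}$), should let one construct such a path, possibly after replacing $h,h'$ by representatives within their cosets modulo $\Stab_G(z)$.

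Finally, $u_0\notin \acl_G(A)$ follows from the description of $\acl_G(A)$ as the root closure of $\Stab_G(v_A)$ in $G$: were $u_0^k\in \Stab_G(v_A)$ for some $k\neq 0$, then $u_0^k$ would fix the entire geodesic $[v_A,z]$ in $T$, placing $v_A$ and $z$ in a common cylinder, which is ruled out by $z$ being a Z-type vertex distinct from $v_A$ together with the bipartite structure of the tree. Applying Lemma \ref{ForkAlg} then yields $\bar{b}\not\forkindep_A\bar{c}$.
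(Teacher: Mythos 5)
Your overall strategy---exhibit $u_0 \in \acl_G(A\bar{b}) \cap \acl_G(A\bar{c}) \setminus \acl_G(A)$ and invoke Lemma \ref{ForkAlg}---is a valid sufficient criterion for forking, and it is exactly what the paper does in the simpler Lemma \ref{IntersectInBasepointEdgeFork}. Your identification of $z$ as a Z-type vertex distinct from $v_A$ is also correct, as is the argument that $u_0 \notin \acl_G(A)$.

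However, the step you flag yourself as ``the main obstacle'' is a genuine gap, and I do not think it can be filled by the devices you suggest. The hypotheses give you $z \in T_{A\bar{b}}$, which is why the path from $T_{A\bar{b}}$ to $y$ is the single edge $e'$ (lying in the translate $g\cdot T_{A\bar{b}}$), so Proposition \ref{RigidTranslates} applies cleanly on the $\bar{b}$-side. But on the $\bar{c}$-side you only have $z \in h\cdot T_{A\bar{c}}$ and $z \in h'\cdot T_{A\bar{c}}$, with $h,h'$ arbitrary. Nothing forces $z$ (or $x$, or any vertex near $e$) to lie in $T_{A\bar{c}}$ itself, nor does anything guarantee the geodesic from $T_{A\bar{c}}$ to $x$ is covered by translates of $T_{A\bar{c}}$ --- the translates $T_{A\bar{c}}$, $h\cdot T_{A\bar{c}}$ and $h'\cdot T_{A\bar{c}}$ can be pairwise disjoint and far apart in $T$. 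So Proposition \ref{RigidTranslates} does not apply, and you only obtain $h^{-1}u_0 h$ and $h'^{-1}u_0 h'$ in $\acl_G(A\bar{c})$, not $u_0$ itself. Appealing to $2$-acylindricity or replacing $h,h'$ modulo $\Stab_G(z)$ does not give a path through translates of $T_{A\bar{c}}$; you would need something like $h \cdot v_A = v_A$, which is not available. Indeed, this is precisely why the paper treats the basepoint-adjacent case separately as Lemma \ref{IntersectInBasepointEdgeFork} (there $v_A \in T_{A\bar{c}}$ provides the anchor) and uses a different argument here. The phenomenon underneath is that forking does not, in general, force a nontrivial intersection of algebraic closures, so a route forcing $u_0 \in \acl_G(A\bar{c})$ should be viewed with suspicion unless the geometry visibly delivers it.

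The paper's actual proof avoids the algebraic-closure intersection on the $\bar{c}$-side entirely. It sets $X := \Aut_{A\bar{c}}(G)\cdot(\bar{u},\bar{v})$ where $\bar{u},\bar{v}$ generate $\Stab(x),\Stab(y)$ (both shown to lie in $\acl_G(A\bar{b})$ by the anchor $z \in T_{A\bar{b}}$), observes $X$ is $A\bar{c}$-definable by Theorem \ref{TypesIsolated}, and then produces infinitely many pairwise disjoint $\Aut_A(G)$-translates of $X$ using powers of a product $\tau$ of conjugate Dehn twists about $e$. The two edges in distinct orbits and the non-abelianness of $U=\Stab(x)$ are used not to place a common element in both algebraic closures, but to rigidify the conjugation: $\gamma \phi_j(\bar{u})\gamma^{-1} = \phi_k(\bar{u})$ pins down $\gamma$ uniquely, and then $\gamma\phi_j(\bar{v})\gamma^{-1} = \phi_k(\epsilon^m\bar{v}\epsilon^{-m})$ can hold for at most one $m$. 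Your proof would need an entirely different mechanism to close the gap, and as written the argument does not go through.
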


\begin{proof} Choose a Bass-Serre presentation $(T^1, T^0, (t_{\alpha})_{\alpha})$ for $G$ with respect to 
$\Lambda$ such that $e$ and $e'$ are in $T^1$.

\begin{figure}[ht!]

\centering
\includegraphics[width=.9\textwidth]{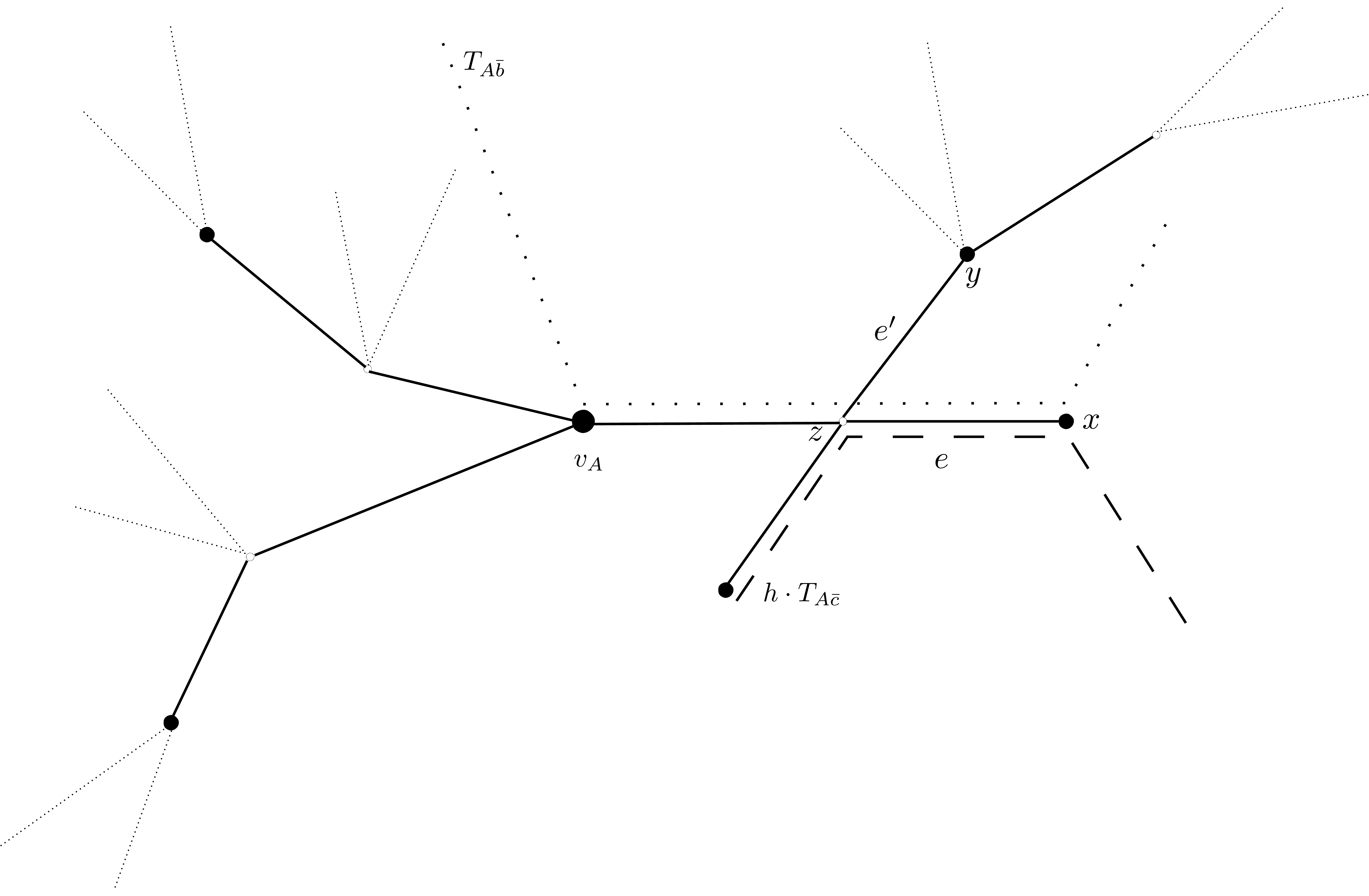}

\end{figure} 

Denote the stabilizers of $x$ and $y$ by $U$ and $V$ respectively, and let $\bar{u}, \bar{v}$ denote 
generating tuples of $U$ and $V$ respectively. We assume without loss of generality that $x \neq v_A$ so that $U$ is not cyclic.

Let $X$ denote the orbit of the pair $(\bar{u}, \bar{v})$ under $\Aut_{A\bar{c}}(G)$. 
We will show that $X$ admits infinitely many pairwise disjoint translates by a sequence of automorphisms in $\Aut_A(G)$: 
since $X$ is definable over $A\bar{c}$ and contains $(\bar{u},\bar{v})$, this will imply that $(\bar{u},\bar{v})$ forks with 
$\bar{c}$ over $A$. Now by Proposition \ref{MinSubtreeAndAlgClos} and \ref{RigidTranslates} 
respectively, both $\bar{u}$ and $\bar{v}$ are in $\acl(A\bar{b})$ so this implies that $\bar{b}$ forks with $\bar{c}$ over $A$. 

Let $\tau_e$ be a Dehn twist about $e$ by some element $\epsilon$ of $\Stab(e)$. 

By uniqueness of the tree $T$, for any element $\phi$ of $\Aut_A(G)$ there is an automorphism $f$ of 
$T$ such that for any $w \in T$ and $g \in G$ we have $f(g \cdot w) = \phi(g) \cdot f(w)$. 
Recall now that $\Mod_{A\bar{c}}(G)$ has finite index in $\Aut_{A\bar{c}}(G)$: pick $\phi_0, \ldots , \phi_l$ 
such that the classes $\Mod_{A\bar{c}}(G) \phi_j$ cover $\Aut_{A\bar{c}}(G)$, and denote by $f_1, \ldots, f_l$ 
the corresponding automorphisms of $T$. Since $\phi_j$ fixes $A\bar{c}$, the automorphism $f_j$ must preserve $T_{A\bar{c}}$. 
Now $e$ lies in $g \cdot T_{A\bar{c}}$, so the edges $f_j(e)$ also lie in a translate of $T_{A\bar{c}}$. 

Denote by $\tau_{f_k(e)}$ the automorphism $\phi_k \tau_e \phi^{-1}_k$: it is a Dehn twist about $f_k(e)$. 
Choose $j_1, \ldots, j_{l'}$ minimal such that $\{f_{j_1}(e), \ldots, f_{j_{l'}}(e)\}=\{f_1(e), \ldots, f_l(e)\}$ and define
$$ \tau = \tau_{f_{j_1}(e)} \circ \ldots \circ \tau_{f_{j_{l'}}(e)}.$$

We will show that for $r$ large enough the sequence of translates $\{ \tau^{rn}(X) \}_{n \in \N}$ consists of pairwise disjoint sets. 
For this it is enough to show that for $m$ large enough, $X \cap \tau^m(X)$ is empty. 
Suppose that there exists $j,k$ and $\alpha, \beta$ in $\Mod_{A\bar{c}}(G)$ such that
$$ \alpha (\phi_j (u, v)) = \tau^m \beta(\phi_k(u, v))$$

By Proposition \ref{NormalFormOfAMFixingH}, any element of $\Mod_{A\bar{c}}(G)$ can be written as a 
product of (a conjugation and) elementary automorphisms whose supports, if they are edges, 
are not in any translate of $T_{A\bar{c}}$ hence are not in the orbit of the edges $f_j(e)$. 
In particular $\beta$ commutes with $\tau$ (up to conjugation). 
Thus there exists $\theta= \beta^{-1} \alpha$ in $\Mod_{A\bar{c}}(G)$ such that $\theta (\phi_j (\bar{u}, \bar{v}))$ 
is conjugate to $\tau^m (\phi_k(\bar{u}, \bar{v}))$.

Since $e$ and $e'$ are not in the same orbit, we can choose a Bass-Serre presentation for $G$ with respect to $T$ such 
that $f_j(e)$ and $f_j(e')$ are in $T^1$. The automorphism $\theta$ can be written as a product of a conjugation and 
elementary automorphisms whose supports are not in the orbit of the edges $f_j(e)$ and $f_j(e')$. 
Thus Lemmas \ref{DehnTwistOnBassSerre} and \ref{VertexAMOnBassSerre} imply that $\theta (\phi_j (\bar{u},\bar{v}))$ is 
conjugate to the tuple $\phi_j (\bar{u},\bar{v})$. On the other hand, 
by definition of $\tau$ we have that $\tau^m (\phi_k(\bar{u},\bar{v}))$ is conjugate to $\phi_k(\bar{u}, \epsilon^m \bar{v} \epsilon^{-m})$ 
so finally there exists an element $\gamma$ such that
$$ \gamma \phi_j (\bar{u}, \bar{v}) \gamma^{-1} = \phi_k(\bar{u}, \epsilon^m \bar{v} \epsilon^{-m})$$

For $j$ and $k$ fixed, this holds for at most one value of $\gamma$, since we 
have $\gamma \phi_j(\bar{u}) \gamma^{-1} = \phi_k(\bar{u})$, and 
$\bar{u}$ generates a non abelian subgroup. 
But then, $\gamma \phi_j(\bar{v}) \gamma^{-1}= \phi_k(\epsilon^m \bar{v} \epsilon^{-m})$ 
can only be true for a single value of $m$ (for each $j, k$). Thus for $m$ large enough, $X \cap \tau^m(X)$ is empty.
\end{proof}

To finish the proof of Theorem \ref{FreelyIndec}, we need to deal with the case 
where translates of the minimal subtrees intersect in a surface type vertex. 
For this, we will use the results about the curve complex exposed in Section \ref{CCSec}. 

\begin{lemma}Assume we are in the setting of Theorem \ref{FreelyIndec}.
Let $T_{A\bar{b}}$ 
(respectively $T_{A\bar{c}}$) denote the minimal subtree of $\langle A, \bar{b} \rangle$ 
(respectively $\langle A, \bar{c} \rangle$) in $T$.  

Suppose that there exists an element $g$ of $G$ such that the intersection  $ (g^{-1} \cdot T_{A\bar{b}}) \cap T_{A\bar{c}}$ 
contains a surface type vertex. Then $\bar{b}$ and $\bar{c}$ fork over $A$.
\end{lemma}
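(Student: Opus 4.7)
Let $v$ be the surface type vertex in $T_{A\bar c}\cap g^{-1}\cdot T_{A\bar b}$, with stabilizer $S$ and surface $\Sigma$, and set $v':=g\cdot v\in T_{A\bar b}$ (surface type, stabilizer $gSg^{-1}$, surface $g\Sigma$). The plan is to apply Proposition \ref{MinSubtreeAndAlgClos} to $\langle A,\bar b\rangle$ at $v'$ and to $\langle A,\bar c\rangle$ at $v$, obtaining elements $\gamma_b\in gSg^{-1}\cap\acl_G(A\bar b)$ and $\gamma_c\in S\cap\acl_G(A\bar c)$ corresponding to non-boundary-parallel simple closed curves $[\gamma_b]$ on $g\Sigma$ and $[\gamma_c]$ on $\Sigma$. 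By Lemma \ref{ForkAlg} together with symmetry and transitivity of forking, it will be enough to show $\gamma_b\not\forkindep_A\bar c$.

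Following the strategy of Lemma \ref{IntersectInEdgeFork}, I will set $Y:=\Aut_{A\bar c}(G)\cdot\gamma_b$ and exhibit a sequence $(\phi_n)_{n<\omega}\subseteq\Aut_A(G)$ such that the translates $\phi_n(Y)$ are pairwise disjoint. Since $Y$ is definable over $A\bar c$ by Theorem \ref{TypesIsolated} and belongs to $\tp(\gamma_b/A\bar c)$, and since each $\phi_n$ fixes $A$ (so $\phi_n(\bar c)$ shares the type of $\bar c$ over $A$), pairwise disjointness of the translates will yield $2$-inconsistency of the conjugated formulas and force $Y$ to fork over $A$, giving $\gamma_b\not\forkindep_A\bar c$ as wanted.

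Each $\phi_n$ will be a vertex automorphism of $G$ associated to $v'$ that extends a mapping class $h_n\in\mathcal{MCG}(g\Sigma)$ on $gSg^{-1}$ via Remark \ref{BuildingVertexAM}. Because $v_A\neq v'$ (the basepoint of the pointed cyclic JSJ tree is never of surface type: if $A$ is cyclic it is the added vertex, and if $A$ is non-cyclic it is rigid type), Remark \ref{VertexAMFixingAGivenVertex} lets me arrange $\phi_n|_{\Stab(v_A)}=\id$, so $\phi_n\in\Aut_A(G)$. Moreover $g\Sigma$, being flexible in the cyclic JSJ of $G$ relative to $A$, is neither thrice-punctured sphere, once-punctured Klein bottle nor twice-punctured projective plane, so it is either a punctured torus or has Euler characteristic $\leq-2$, and Lemma \ref{sccfork} therefore applies to its curve complex.

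To choose the $h_n$, I will unpack Proposition \ref{NormalFormOfAMFixingH} applied to $\Mod_{A\bar c}(G)$: since $v'\in g\cdot T_{A\bar c}$ lies in a translate of $T_{A\bar c}$, the restriction to $gSg^{-1}$ of the vertex automorphism at $v'$ in the normal form of any $\theta\in\Mod_{A\bar c}(G)$ is a product of Dehn twists about curves in a canonical disjoint family $C$ of non-boundary-parallel simple closed curves on $g\Sigma$, dual to edges of the minimal subtree of $gSg^{-1}$ in the common refinement of the cyclic JSJ trees of $G$ relative to $A$ and relative to $A\bar c$ that survive the collapse to the latter. Choosing $[\gamma_0]\in C$ (or $[\gamma_0]:=[\gamma_b]$ if $C=\emptyset$), the whole $\Mod_{A\bar c}$-mapping-class action at $v'$ then lies inside $\Out_{[\gamma_0]}(gSg^{-1})$, since Dehn twists about mutually disjoint curves fix each other's isotopy classes. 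Applying Lemma \ref{sccfork} with $[a]:=[\gamma_b]$ and $[b]:=[\gamma_0]$ will supply mapping classes $(\rho_n)_{n<\omega}$ such that $\rho_n f_1([\gamma_b])\neq\rho_m f_2([\gamma_b])$ whenever $n\neq m$ and $f_1,f_2\in\Out_{[\gamma_0]}$. Because $\Mod_{A\bar c}$ has finite index in $\Aut_{A\bar c}$, the full $\Aut_{A\bar c}$-orbit of $[\gamma_b]$ meets only finitely many $\Out_{[\gamma_0]}$-orbits, so after passing to a subsequence the corresponding $\phi_n$ will satisfy $\phi_n(Y)\cap\phi_m(Y)=\emptyset$ for $n\neq m$. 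The main obstacle is the precise identification of the disjoint curve family $C$ and the verification that $\Mod_{A\bar c}$-mapping-class restrictions at $v'$ indeed lie in $\Out_{[\gamma_0]}$; this will require a careful tracing through the refinement and collapse structure underlying Proposition \ref{NormalFormOfAMFixingH}.
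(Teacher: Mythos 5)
Your overall plan matches the paper's: produce $\gamma_b\in\acl_G(A\bar b)$ and $\gamma_c\in\acl_G(A\bar c)$ via Proposition \ref{MinSubtreeAndAlgClos}, form the $\Aut_{A\bar c}(G)$-orbit $Y$ of $\gamma_b$ (definable over $A\bar c$ by Theorem \ref{TypesIsolated}), and produce infinitely many pairwise disjoint translates of $Y$ using vertex automorphisms built from the mapping classes of Lemma \ref{sccfork}. But the step you yourself flag as the ``main obstacle'' is a genuine gap, and the route you sketch does not close it. Proposition \ref{NormalFormOfAMFixingH} only asserts, for each individual $\theta\in\Mod_{A\bar c}(G)$, that the surface-vertex factor in the normal form of $\theta$ fixes \emph{some} non-boundary-parallel simple closed curve; the curve comes from a common refinement of the pointed JSJ tree with the particular one-edge $(G,A\bar c)$-tree generating $\theta$, and so can vary with $\theta$. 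There is no canonical multicurve $C$ independent of $\theta$, so the claimed inclusion of the whole $\Mod_{A\bar c}$-action at $v'$ into a single $\Out_{[\gamma_0]}$ is unsupported. (And even for disjoint curves, disjointness gives commuting Dehn twists, not fixed isotopy classes, which is what Lemma \ref{sccfork} requires.)

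The observation that repairs the argument is already in your hands: $\gamma_c$ itself is the right choice of $[b]$ in Lemma \ref{sccfork}. By Proposition \ref{MinSubtreeAndAlgClos}, $\gamma_c$ lies in $\Stab(v_{A\bar c})$, and by Proposition \ref{EquivalentDefOfMod} combined with the last clause of Lemma \ref{NormalFormMod}, every element of $\Mod_{A\bar c}(G)$ fixes $\Stab(v_{A\bar c})$ pointwise, hence fixes $\gamma_c$. Restricting a normal form of $\theta\in\Mod_{A\bar c}(G)$ to the surface vertex group, all factors other than the one supported at that vertex act by conjugation, so the surface-vertex factor preserves the conjugacy class of $\gamma_c$ in the surface group --- exactly the hypothesis Lemma \ref{sccfork} needs, with no auxiliary family $C$. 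A secondary issue is that building $\phi_n$ at the single vertex $v'$ does not control what happens at the vertices to which the finitely many coset representatives of $\Mod_{A\bar c}(G)$ in $\Aut_{A\bar c}(G)$ send $v'$: these images may lie in different $G$-orbits, so ``passing to a subsequence'' is not sufficient. The paper resolves this by taking $T^0$-representatives $v_1,\ldots,v_s$ of all such images, pushing forward $\gamma_b,\gamma_c$ by the coset representatives to get $b_j,c_j$, and using the products $\rho_n=\rho_n^{v_1}\circ\cdots\circ\rho_n^{v_s}$ of vertex automorphisms so that each of the finitely many orbits is controlled simultaneously.
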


\begin{proof} 
Denote by $v$ the surface type vertex, by $S$ its stabilizer and by 
$\Sigma$ the corresponding surface with boundary. 
Fix a Bass-Serre presentation $(T^1, T^0, (t_f)_f)$ such that $v$ lies in $T^0$. 
Denote by $v_{A\bar{b}}$ and $v_{A\bar{c}}$ the basepoints of the pointed cyclic 
JSJ trees of $G$ with respect to $\langle A, \bar{b} \rangle$ and $\langle A, \bar{c} \rangle$ respectively. 

By Proposition \ref{MinSubtreeAndAlgClos}, there exist elements $b_0$ and $c_0$ of $S$ 
which correspond to non boundary parallel simple closed curves on $\Sigma$, such that $b_0^g$ is 
contained in $\Stab(v_{A\bar{b}}) \subseteq \acl_{G}(A\bar{b})$, and $c_0$ is contained in 
$\Stab(v_{A\bar{c}}) \subseteq \acl_{G}(A\bar{c})$. 
%

Denote by $X$ the orbit of $b_0^g$ under $\Aut_{A\bar{c}}(G)$. 
We will show that $X$ admits infinitely many pairwise disjoint translates by 
a sequence of automorhisms in $\Aut_A(G)$: 
since $X$ is definable over $A\bar{c}$, this implies that $\bar{b}$ forks with $\bar{c}$ over $A$. 


As in the proof of Lemma \ref{IntersectInEdgeFork}, pick $\phi_1, \ldots , \phi_l$ 
such that the classes $\Mod_{A\bar{c}}(G) \phi_i$ cover $\Aut_{A\bar{c}}(G)$ and denote by $f_1, \ldots, f_l$ 
the corresponding automorphisms of $T$. Since $\phi_i$ fixes $A\bar{c}$, the automorphism $f_i$ must preserve $T_{A\bar{c}}$, 
hence the vertex $f_i(v)$ is a surface type vertex in $T_{A\bar{c}}$. 

Let $\{v_1, \ldots , v_s\}$ be the vertices of $T^0$ which lie in the orbit of one of the vertices $f_i(v)$. 
Up to reindexing we may assume $v_j$ is in the orbit of $f_j(v)$. 
Denote by $S_j$ the stabilizer of $v_j$, and by $b_j,c_j$ the images of $b_0, c_0$ by $\phi_j$.

Note that $\phi_j$ fixes $A\bar{c}$, so $c_j =\phi_j(c_0)$ is also in $\Stab(v_{A\bar{c}})$. 
Any element of $\Mod_{A\bar{c}}(G)$ fixes  $\Stab(v_{A\bar{c}})$ pointwise, so it preserves the conjugacy class of $c_j$.

By applying Lemma \ref{sccfork} to $\Sigma$ for the conjugacy classes of $b_0$ and $c_0$, we get a sequence of automorphisms 
$\rho^v_n$ of $\Aut(S)$ such that for any two automorphisms $ \sigma, \sigma'$ of $S$ 
which preserve the conjugacy class of $c_0$, if $m \neq n$ the elements $\rho^{v}_n\sigma(b_0)$ and 
$\rho^{v}_m \sigma'(b_0)$ are not conjugate in $S$ (and thus in $G$).

Let $\rho^{v_j}_n$ be a vertex automorphism associated to $v_j$ whose restriction to 
$S_j$ is $\phi_j \circ \rho^v_n \circ \phi^{-1}_j$  and define
$$\rho_n = \rho^{v_1}_n \ldots \rho^{v_s}_n.$$

We will show that $\rho_n(X) \cap \rho_m(X)$ is empty. Suppose not: then there exists $j,k$ and 
$\theta, \theta'$ in $\Mod_{A\bar{c}}(G)$ such that
\begin{eqnarray} \label{ConjRelation}
\rho_n \theta (\phi_j(b_0^g)) = \rho_m \theta'(\phi_k(b_0^g))
 \end{eqnarray}

By Remark \ref{NormalFormMod}, the automorphism $\theta$ can be 
written as a product of a conjugation with elementary automorphisms $\tau_{e_1} \ldots \tau_{e_p} \sigma_{u_1} \ldots \sigma_{u_q}$ associated to $T$ where $\sigma_{u_j}$ is a vertex automorphism supported on $v_j$, and $\tau_{e_i}$ is a Dehn twist about the edge $e_i$.
All the elementary automorphisms with support distinct from $v_j$ restrict to conjugations on 
$S_j$, so $\theta (\phi_j(b_0^g))$ and $\theta (c_j)$ are conjugates of $\sigma_{v_j}(b_j)$ and $\sigma_{v_j}(c_j)$ respectively.

Now $\theta \in \Mod_{A\bar{c}}(G)$ so as noted above, it preserves the conjugacy class of $c_j$: 
hence, so does $\sigma_{v_j}$. Let $\sigma = \phi^{-1}_j \circ \sigma_{v_j} \circ \phi_j$: 
the automorphism $\sigma$ preserves the conjugacy class of $c_0$.

Similarly, $\theta'$ fixes $c_k$ so in its normal form 
$\theta' = \Conj(g) \tau'_{e'_1} \ldots \tau'_{e'_{p'}} \sigma'_{u'_1} \ldots \sigma'_{u'_{q'}}$,  
the factor $\sigma'_{v_k}$ is such that $\sigma' = \phi^{-1}_k \circ \sigma'_{v_k} \circ \phi_k$ 
preserves the conjugacy class of $c_0$.

Now $\rho_n \theta(\phi_j(b_0^g))$ is conjugate to 
$$\rho_n \theta (b_j) = \rho_n \sigma_{v_j}(b_j) = \rho_n \sigma_{v_j} \phi_j (b_0) = \rho_n \phi_j \sigma(b_0)$$, 
which is itself equal to $\rho_n^{v_j} \phi_j \sigma(b_0)=\phi_j \rho^{v}_n \sigma(b_0)$. 
Similarly $\rho_m \theta'(\phi_k(b_0^g))$ is conjugate to $\phi_k \rho^{v}_m \sigma'(b_0)$.

Thus (\ref{ConjRelation}) implies that $\phi_j \rho^{v}_n \sigma(b_0)$ is conjugate to $\phi_k \rho^{v}_m \sigma'(b_0)$.

Now these are elements representing a non boundary parallel simple closed curves in $S_j$ and in $S_k$ respectively. 
Thus $S_j$ and $S_k$ are conjugate, so we must have $j=k$. Hence $\phi_j \rho^{v}_m\sigma(b_0)$ is 
conjugate to $\phi_j \rho^{v}_m \sigma'(b_0)$ in $S_j$, which 
contradicts our choice of $\rho^v_n$. 
\end{proof}

\section{Examples and further remarks} \label{ExRem}
We start by giving some simple examples of forking independence between tuples in non abelian free groups.

\begin{ex}
\begin{itemize}
 \item[(i)] Let $\bar{\gamma}_1\in\langle e_1,e_2\rangle$ and $\bar{\gamma}_2\in\langle e_3,e_4\rangle$. Then 
 $\bar{\gamma}_1$ is independent from $\bar{\gamma}_2$ over $\langle [e_1,e_2], [e_3,e_4] \rangle$ in $\F_4$ (see Figure $1$).
 \item[(ii)] Let $\bar{\gamma}_1,\bar{\gamma}_2\in \F_2\setminus\langle[e_1,e_2]\rangle$. Then $\bar{\gamma}_1$ forks with 
 $\bar{\gamma}_2$ over $[e_1,e_2]$ (see Figure $6$).
\end{itemize}

\end{ex}

\begin{figure}[ht!] \label{Fig2}

\centering
\includegraphics[width=.5\textwidth]{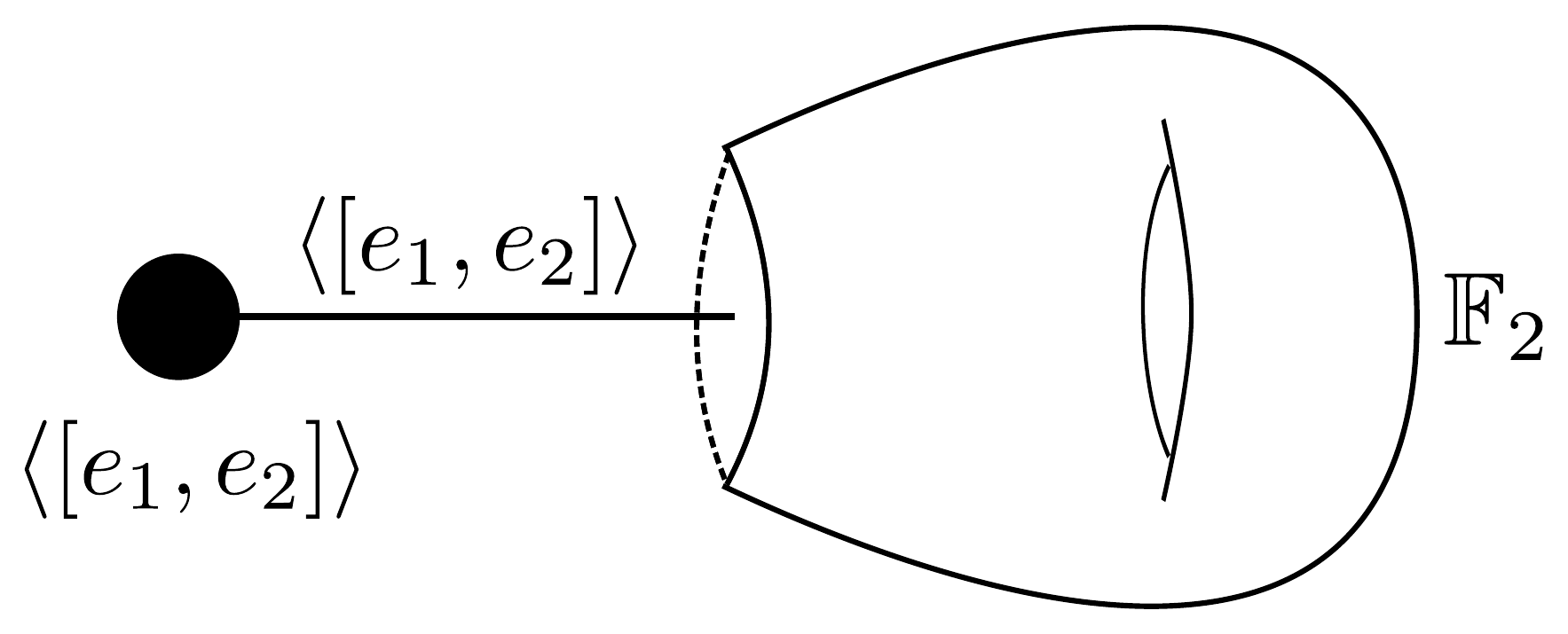}
\caption{A graph of groups decomposition of the pointed tree of cylinders of 
$\F_2$ relative to $H_k = \langle [e_1,e_2]^k\rangle$ for any $k\neq 0$.}

\end{figure}

We moreover note that our results give a complete description of forking independence (for any two tuples in $\F_2$)
over any set of parameters in $\F_2$. The reason is  
that for any set of parameters $A\subseteq \F_2$, either $\F_2$ is freely indecomposable with respect to $A$, or 
$acl(A)$ is a free factor of $\F_2$. We would like to connect this observation with the following question 
we heard from K. Tent.

\begin{question}
Is it possible to prove that $T_{fg}$ is stable using the geometric/algebraic description of forking independence? 
\end{question}

Of course, following our discussion after Fact \ref{forkprop} one needs to find an independence relation (satisfying the properties of Fact \ref{forkprop}) 
in a saturated ``enough'' model of $T_{fg}$, still the intuition comming from $\F_2$ might be useful.

One of the difficulties in characterizing forking (between tuples of elements) in a given torsion-free hyperbolic group $G$, or indeed
in any structure which is not saturated, is that the sequence of tuples 
$(\bar{c}_i)_{i<\omega}$ witnessing the forking of a formula $\phi(x,\bar{c})$ with parameters in $G$ does not have to belong to $G$, 
but in general lies in a saturated elementary extension. 

Thus, it is possible that one needs to move to and understand automorphisms 
of saturated elementary extensions (which are known 
to be non finitely generated). But in the case of torsion-free hyperbolic groups, we are far from 
understanding non-finitely generated models, let alone their automorphism groups.




\bibliography{biblio}

\end{document}